\numberwithin{equation}{section}
\newtheorem{theorem}[equation]{Theorem}
\newtheorem{proposition}[equation]{Proposition}
\newtheorem{lemma}[equation]{Lemma}
\newtheorem{corollary}[equation]{Corollary}
\newtheorem{conjecture}[equation]{Conjecture}
\g@addto@macro\th@definition{\thm@headpunct{}}
\theoremstyle{definition}
\newtheorem{remark}[equation]{Remark}
\newtheorem*{remarks*}{Remarks}
\newtheorem{definition}[equation]{Definition}
\newcommand{\abs}[1]{\left\vert#1\right\vert}
\newcommand{\norm}[1]{\left\Vert#1\right\Vert}
\newcommand{\vol}{\operatorname{vol}}
\newcommand{\subriemannian}{sub-Riemannian } 
\newcommand{\subfinsler}{sub-Finsler } 
\newcommand{\R}{\mathbb R}
\newcommand{\N}{\mathbb N}
\newcommand{\PP}{\mathcal P}
\newcommand{\g}{\mathfrak{g}}
\newcommand{\Span}{\operatorname{span}}
\def\eps{\epsilon}
\newcommand{\Ad}{\operatorname{Ad}}
\newcommand{\ad}{\operatorname{ad}}
\newcommand{\Width}{\operatorname{MinHeight}} 
\newcommand{\Size}{\operatorname{Size}} 
\newcommand{\matrixcomplement}[3]{#1_{(#2,#3)}}
\newcommand{\brkt}[2]{\left[#1,#2\right]}
\newcommand{\dil}[1]{\delta_{#1}} 
\newcommand{\proj}{\pi} 
\newcommand{\stepproj}{\proj_{s-1}} 
\newcommand{\multistepproj}[1]{\proj_{#1}} 
\newcommand{\horproj}{\proj} 
\newcommand{\idelem}{1_G} 
\newcommand{\idengel}{1_\engel} 
\newcommand{\Tang}{\mathrm{Tang}} 
\newcommand{\WeakTang}{\mathrm{WeakTang}} 
\newcommand{\Asymp}{\mathrm{Asymp}} 
\newcommand{\image}{\mathrm{Im}} 
\newcommand{\matelem}[3]{#1_{#2#3}} 
\newcommand{\configuration}{E} 
\newcommand{\innerproduct}[2]{\left\langle #1,#2\right\rangle} 
\newcommand{\engel}{E}
\newcommand{\dhaus}{d_H}
\begin{document}
\title[Blowups and blowdowns of Carnot geodesics]{Blowups and blowdowns of geodesics \\in Carnot groups}

\author[Eero Hakavuori]{Eero Hakavuori} 
\email{eero.hakavuori@sissa.it}

\author[Enrico Le Donne]{Enrico Le Donne}
\email{enrico.ledonne@unifr.ch}

\address[Hakavuori]{SISSA, Via Bonomea 265, 34136 Trieste, Italy}
 
\address[Le Donne]{Department of Mathematics, University of Fribourg, Chemin du Musée~23, 1700 Fribourg, Switzerland \& 
University of Jyv\"askyl\"a, Department of Mathematics and Statistics, P.O. Box (MaD), FI-40014, Finland}

\keywords{Geodesics, tangent cones, asymptotic cones, \subriemannian geometry, \subfinsler geometry, Carnot groups, regularity of length minimizers}

\begin{abstract}
This paper provides some partial regularity results for geodesics (i.e., isometric images of intervals) in arbitrary \subriemannian and \subfinsler manifolds.
Our strategy is to study infinitesimal and asymptotic properties of geodesics in Carnot groups equipped with arbitrary \subfinsler metrics. We show that tangents of Carnot geodesics are geodesics in some groups of lower nilpotency step. Namely, every blowup curve of every geodesic in every Carnot group is still a geodesic in the group modulo its last layer. 
Then as a consequence we get that in every \subriemannian manifold any $s$ times iterated tangent of any geodesic is a line, where $s$ is the step of the \subriemannian manifold in question.
With a similar approach, we also show that blowdown curves of geodesics in \subriemannian Carnot groups are contained in subgroups of lower rank. This latter result is also extended to rough geodesics. 
\end{abstract}

\subjclass{ 
53C17, 
49K21, 
28A75. 
}

\thanks{E.H. was supported by the Vilho, Yrj\"o and Kalle V\"ais\"al\"a Foundation.
E.L.D. was partially supported by the Academy of Finland 
(grant 288501 `\emph{Geometry of subRiemannian groups}')
and by the European Research Council 
(ERC Starting Grant 713998 GeoMeG `\emph{Geometry of Metric Groups}').
}
\date{January 17, 2022}
\maketitle
\tableofcontents
 
\newpage
\section{Introduction}
{
In \subriemannian geometry, one of the major open problems is the regularity of geodesics, i.e., of isometric embeddings of intervals. Because of the presence of abnormal curves, a priori \subriemannian geodesics only have Lipschitz regularity, yet all known examples are $C^\infty$. For a modern introduction to the topic we refer to \cite{Vittone-note}.

We approach the differentiability problem by considering the infinitesimal geometry, which is given by \subriemannian Carnot groups, and within them studying limits of dilated curves, called tangents or blowups. The main aim of this paper is to show that iterating the process of taking tangents one necessarily obtains only lines:

\begin{theorem}\label{thm:sr tangent iteration}
	If $\gamma$ is a geodesic in a \subriemannian manifold, then every $s$ times iterated tangent of $\gamma$ is a line, where $s$ is the step of the \subriemannian manifold.
\end{theorem}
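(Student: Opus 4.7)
The strategy is to reduce the statement to the main structural result of the paper, namely that every blowup of a geodesic in a Carnot group of step $k$ is itself a geodesic in the Carnot group of step $k-1$ obtained by quotienting out the last layer. Together with Mitchell's tangent cone theorem, this drives a straightforward induction on the step.

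First, I would pass from a general sub-Riemannian manifold to a Carnot group. By Mitchell's theorem, the metric tangent of a sub-Riemannian manifold $M$ of step $s$ at any point $p$ is isometric to a sub-Riemannian Carnot group of step at most $s$, and rescaled balls converge to this tangent in the pointed Gromov--Hausdorff sense. Consequently, every blowup of the isometric curve $\gamma$ at $p$ is again an isometric curve, i.e.\ a geodesic, in this tangent Carnot group. Thus after one iteration we land on a geodesic in a Carnot group of step at most $s$.

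Next, I would iterate the Carnot blowup reduction. Given a geodesic $\sigma$ in a Carnot group $G$ of step $k \geq 2$, any tangent of $\sigma$ at any point is, by the main theorem, a geodesic in the quotient Carnot group $G/L_k$ of step $k-1$, where $L_k$ denotes the last layer of $G$; this quotient is still sub-Riemannian since the first layer is unaffected. Applying this reduction $s-1$ further times, starting from the first tangent, every $s$ times iterated tangent of $\gamma$ is a geodesic in a sub-Riemannian Carnot group of step $1$. But such a group is just a Euclidean space with an inner product norm, whose geodesics are straight lines.

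The decisive obstacle is the Carnot blowup reduction itself, which constitutes the technical heart of the paper. By contrast, the passage from a sub-Riemannian manifold to its tangent Carnot group relies only on standard Gromov--Hausdorff convergence properties of Mitchell's tangent cone, and the induction above is essentially a matter of tracking the drop in step at each stage, handling an arbitrary choice of subsequence along which each blowup is extracted.
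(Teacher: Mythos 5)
Your overall route is the same as the paper's: reduce to the Carnot setting by passing to the metric tangent, then iterate the blowup theorem (Theorem~\ref{thm:tangent}, packaged as Corollary~\ref{cor:tangent iteration}) to drop the step down to $1$, where sub-Riemannian geodesics are Euclidean lines. There is, however, one genuine gap in your reduction step. You assert that the metric tangent of a sub-Riemannian manifold of step $s$ at an arbitrary point is itself a sub-Riemannian Carnot group. That is only true at regular points of the distribution; at singular points Bella\"iche's theorem gives only that the nilpotent approximation is a homogeneous space $G/H$, where $G$ is a Carnot group of step $s$ and $H<G$ is a closed, dilation-invariant subgroup. Since $G/H$ need not be a group, Corollary~\ref{cor:tangent iteration} cannot be applied to it directly, and your induction has nowhere to start in the non-equiregular case. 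Note that the theorem is stated for arbitrary sub-Riemannian manifolds, and indeed the paper exploits exactly this generality later (Proposition~\ref{prop:non-constant rank corners} on non-constant rank distributions), so the singular case cannot be dismissed.

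The missing ingredient is a lifting step: the quotient map $\pi:G\to G/H$ is a submetry, so the tangent geodesic $\sigma$ in $G/H$ lifts to a geodesic $\tilde\sigma$ in the genuine Carnot group $G$; one applies Corollary~\ref{cor:tangent iteration} to $\tilde\sigma$ to see that its $(s-1)$-times iterated tangents are lines, and then projects back down (using that $\pi$ commutes with dilations, so every iterated tangent of $\sigma$ is the projection of an iterated tangent of $\tilde\sigma$) to conclude the same for $\sigma$, hence for every $s$-times iterated tangent of $\gamma$. With this insertion your argument coincides with the paper's proof; without it, what you have proved is the theorem restricted to equiregular sub-Riemannian manifolds.
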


This is a generalization of other partial results that have already been attained using a similar approach: In \cite{Hakavuori_LeDonne_2016} we showed that tangents of geodesics are not corners, and in \cite{Monti_Pigati_Vittone} it is shown that among all tangents at a point, one of the tangents is a line. Here ``line'' means a left translation of a one-parameter subgroup and ``corner'' means two half-lines joined together not forming a line.
 
A basic fact from metric geometry is that tangents of geodesics are themselves infinite geodesics. Therefore knowledge about infinite geodesics can help understand the regularity problem. For this reason, in this present work in addition to tangents we consider asymptotic cones, also called blowdowns, of infinite geodesics in Carnot groups. 

Before stating our more specific results, we shall specify the notion of tangents. The notion is the same as previously used in \cite{Hakavuori_LeDonne_2016, Monti_Pigati_Vittone}. 
We shall mainly restrict our considerations to Carnot groups, while allowing arbitrary length distances. For the notion of tangents within manifolds, we refer to \cite{Monti_Pigati_Vittone:tangent_cones}.

Let $G$ be a \subfinsler Carnot group, cf. the standard definition in \cite{LeDonne:Carnot}. In $G$ we have a Carnot-Carath\'eodory distance $d$ defined by a norm on the horizontal space $V_1$ of $G$, and we have a one-parameter family of dilations, denoted by $(\delta_h)_{h>0}$. 
Let $I$ be an open interval in $\R$, possibly $I=\R$. 
Let $\gamma:I\to G$ be a 1-Lipschitz curve and fix $\bar t \in I$. 
Denote by $\gamma_h: I_h \to G$ the curve defined on $I_h := \tfrac{1}{h} (I - \bar t)$ by
\begin{equation*}
\gamma_h(t) := \delta_{\frac{1}{h}} \left( \gamma(\bar t)^{-1} \gamma( \bar t + ht )\right).
\end{equation*}
Notice that the last definition is just the non-abelian version of the difference quotient used in the definition of derivatives. 
It is trivial to check that $\gamma_h$ is 1-Lipschitz and $\gamma_h(0)=\idelem$ for all $h\in (0,\infty)$. 
Consequently, by Ascoli-Arzel\'a, for every sequence $h_j\to 0$ there is a subsequence $h_{j_k}$ and a curve $\sigma:\R \to G$ such that $\gamma_{h_{j_k}}\to \sigma$ uniformly on compact sets of $\R$. 
Hence, we define the collection of {\em tangents} as the nonempty set
\[ \Tang(\gamma,\bar t) := \left\{ \sigma\; \vert\; \exists h_j\to 0 :\gamma_{h_{j }}\to \sigma \right\}. \]
\begin{remark}\label{rmk:unique tangent}
	For any Lipschitz curve $\gamma: I\to G$, the tangent cone $\Tang(\gamma,\bar t)$ is a singleton if and only if $\gamma$ has both one-sided derivatives at $\bar{t}$. 
	To see the forward implication, we observe that the unique tangent must be dilation invariant and hence either a line or a corner, so the one-sided derivatives exist.
	For the reverse implication, we first observe that if a finite length curve has a one-sided derivative at a point, then the derivative can only be a horizontal vector. It then follows immediately from the definition of the tangent cone that there is a unique tangent curve, which is the concatenation of the two half-lines defined by the one-sided derivatives.
\end{remark}

In the case where $\gamma:I\to M$ is a Lipschitz curve on a \subriemannian or \subfinsler manifold $M$, we will also denote by $\Tang(\gamma,\bar{t})$ the collection of metric tangents of $\gamma$ at $\bar{t}$. In this case, the elements of $\Tang(\gamma,\bar{t})$ are no longer curves in $M$, but instead curves in the metric tangent space $\tilde{M}$, also called the nilpotent approximation of $M$. We refer to \cite[Section 2.3.1]{jeancontrol} and \cite{Monti_Pigati_Vittone:tangent_cones} for details on this more general construction.

When $I=\R$, we will also consider limits of curves $\gamma_{h_j}$ for sequences $h_j\to\infty$. Similarly to the case $h_j\to 0$, for every sequence $h_j\to\infty$ there is a subsequence $h_{j_k}$ and a curve $\sigma:\R \to G$ such that $\gamma_{h_{j_k}}\to \sigma$ uniformly on compact sets of $\R$, so we define the collection of {\em asymptotic cones} as the nonempty set 
\[ \Asymp(\gamma) := \left\{ \sigma\; \vert\; \exists h_j\to \infty :\gamma_{h_{j }}\to \sigma \right\} . \]
The definition of $\Asymp(\gamma) $ is independent on the choice of $\bar t$ and technically the assumption that $I=\R$ is not necessary if we use the domains $I_h$ as in the definition of tangents. However if $I$ is bounded, the domains $I_h$ degenerate to a point, and in the case where $I$ 
is a half-line, all arguments are only superficially different from the line case.

Finally, we define the iterated tangent cones as the set of all tangents of (iterated) tangents, i.e., for each $k\geq 1$ we define
\[ \Tang^{k+1}(\gamma,\bar t) := \bigcup_{\sigma\in\Tang^k(\gamma,\bar t)} \Tang(\sigma,0). \]
The elements $\sigma\in\Tang^{k}(\gamma,\bar t)$ for any $\bar{t}$ are called \emph{$k$ times iterated tangents of $\gamma$}. We remark that a simple diagonal argument\footnote{If $\gamma_{h_j}\to \sigma$ and $\sigma_{k_j}\to \eta$ for some ${h_j}, {k_j}\to 0$, then for all $\ell$ we have $\gamma_{k_\ell h_j }\to \sigma_{k_\ell}$ and so, by a diagonal argument, there is a sequence $\ell_j$ such that $\gamma_{k_{\ell_j} h_j }\to \eta$.} shows that iterated tangents are also tangents, i.e., that
\[ \dots\subset\Tang^{k+1}(\gamma,\bar{t})\subset \Tang^k(\gamma,\bar{t})\subset\dots\subset \Tang(\gamma,\bar{t}).  \]

Assume $\gamma:I\to G$ is a geodesic, i.e., $d(\gamma(a), \gamma(b) ) = |a-b|$, for all $a,b\in I$.
Our main results in the Carnot group setting are that every element in $\Tang(\gamma,\bar t) $ is a geodesic also when projected into some quotient group of lower step, and that every element in $\Asymp(\gamma)$ is a geodesic inside some subgroup of lower rank (see Theorem~\ref{thm:tangent} and Corollary~\ref{cor:subriemannian blowdowns}, respectively).

\subsection{Statement of the results}
Unless otherwise stated, in what follows $G$ will be a \subfinsler Carnot group of nilpotency step $s$ and $V_1\oplus\dots\oplus V_s=\g$ will be the stratification of the Lie algebra $\g$ of $G$. We denote by $\horproj:G\to G/\brkt{G}{G}$ the projection on the abelianization and by $\stepproj :G\to G/\exp(V_s)$ the projection modulo the last layer $V_s$ of $\g$. 

Both groups $G/\brkt{G}{G}$ and $ G/\exp(V_s)$ are canonically equipped with structures of \subfinsler Carnot groups (see Proposition~\ref{prop:quotient}).
The normed vector space $G/\brkt{G}{G}$ is also further canonically identified with the first layer $V_1$ and its dimension is the rank of $G$. The group $G/\exp(V_s)$ has nilpotency step $s-1$, one lower than the original group $G$.
\begin{theorem}[Blowup of geodesics]\label{thm:tangent}
	If $\gamma:I\to G$ is a geodesic and $t\in I$,
	then for every $\sigma\in\Tang(\gamma,t)$, the curve $\stepproj\circ\sigma:\R\to G/\exp(V_s)$ is a geodesic.
\end{theorem}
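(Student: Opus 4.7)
The plan is to argue by contradiction: assuming $\stepproj\circ\sigma$ fails to be a geodesic on some interval $[a,b]$, I will produce, for $j$ large, a shortcut connecting the endpoints $\gamma_{h_j}(a)$ and $\gamma_{h_j}(b)$ of the pre-limit geodesic $\gamma_{h_j}$, contradicting the geodesic property of $\gamma$.

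Two preliminary reductions are in order. First, $\sigma$ is itself a geodesic in $G$, because each rescaled curve $\gamma_{h_j}$ is a geodesic (dilations rescale CC distances uniformly and left translations are isometries) and uniform-on-compacts limits of geodesics are geodesics. Second, by Proposition~\ref{prop:quotient}, the projection $\stepproj\colon G\to G/\exp(V_s)$ is a 1-Lipschitz submetry between \subfinsler Carnot groups, and horizontal curves in $G$ project to horizontal curves of the same length in $G/\exp(V_s)$. Hence $\stepproj\circ\sigma$ is 1-Lipschitz, and showing that it is a geodesic reduces to proving
\[ d_{G/\exp(V_s)}\bigl(\stepproj\sigma(a),\stepproj\sigma(b)\bigr) = b-a \qquad \text{for all } a<b. \]

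Suppose instead that for some $a<b$ and $\eta > 0$ this distance is at most $b-a-3\eta$. Along a subsequence with $\gamma_{h_j}\to\sigma$ uniformly on compact sets, continuity of $\stepproj$ then provides, for $j$ large, a curve $\alpha_j$ in $G/\exp(V_s)$ from $\stepproj\gamma_{h_j}(a)$ to $\stepproj\gamma_{h_j}(b)$ of length at most $b-a-2\eta$. I would lift $\alpha_j$ horizontally from $\gamma_{h_j}(a)$ to a curve $\tilde\alpha_j\subset G$ of the same length; its endpoint $q_j$ lies in the fiber $\gamma_{h_j}(b)\exp(V_s)$, and the central residue is $z_j := q_j^{-1}\gamma_{h_j}(b)\in\exp(V_s)$. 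Concatenating $\tilde\alpha_j$ with a geodesic from $q_j$ to $\gamma_{h_j}(b)$ produces a curve from $\gamma_{h_j}(a)$ to $\gamma_{h_j}(b)$ of length at most $(b-a-2\eta)+|z_j|_G$; arranging $|z_j|_G < 2\eta$ would then contradict the fact that $\gamma_{h_j}$ is a geodesic of length $b-a$ between these points.

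The heart of the proof, and what I expect to be the main obstacle, is therefore the control of $|z_j|_G$. A purely metric argument based only on the submetry property yields the matching lower bound $|z_j|_G\geq 2\eta$ (from the reverse triangle inequality applied to $\gamma_{h_j}(a), q_j, \gamma_{h_j}(b)$), so the shortcut savings are exactly absorbed by the unavoidable central gap, and no contradiction is obtained without further input. The refinement must exploit the algebraic structure of $G$: by modifying $\alpha_j$ near its endpoint through an inserted horizontal loop, one can displace the endpoint of $\tilde\alpha_j$ along the fiber $\gamma_{h_j}(b)\exp(V_s)$ by prescribed central elements, using that $\exp(V_s)$ is central in $G$ and that iterated brackets of $V_1$ span $V_s$. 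Quantitative ball-box type estimates, together with the convergence $\gamma_{h_j}\to\sigma$ to pin down the pre-limit data, should then allow a refined choice of $\alpha_j$ realizing $|z_j|_G<\eta$ for some $j$, closing the argument by contradiction.
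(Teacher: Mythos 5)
There is a genuine gap, and it sits exactly where you flagged ``the main obstacle.'' Two problems. First, the contradiction you aim for cannot exist in the window $[a,b]$: since $\gamma_{h_j}$ is a geodesic, $d(\gamma_{h_j}(a),\gamma_{h_j}(b))=b-a$, so \emph{no} curve joining these two points can have length less than $b-a$; there is nothing to contradict by producing a path between its own endpoints. Consistently with this, your own computation gives $d(\idelem,z_j)\geq 2\eta$, and the proposed repair --- inserting a horizontal loop near the endpoint to kill $z_j$ --- necessarily adds length at least $d(q_j,q_jz_j)=d(\idelem,z_j)\geq 2\eta$: the ball--box/homogeneity estimate for $Z\in V_s$ gives $d(\idelem,\exp(Z))\approx\norm{Z}^{1/s}$, so generating a prescribed central element by a horizontal excursion costs \emph{linearly} in its Carnot--Carath\'eodory norm, never sublinearly. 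The savings are therefore always exactly absorbed, and no modification localized near $b$ can break this.

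The idea you are missing is to widen the window and distribute the correction along the geodesic \emph{outside} $[a,b]$, at a scale that does not shrink with $h_j$. After reducing to the case where $G$ is the smallest Carnot subgroup containing $\gamma(I)$, one fixes times $t_0,\dots,t_r$ whose horizontal projections are in general position, so that $\Delta:=\Size(\gamma(t_0),\dots,\gamma(t_r))>0$. The central error $z\in\exp(V_s)$ is then produced not by a loop but by conjugating the $r$ segments $x_{k-1}^{-1}x_k$ of the geodesic between these configuration points by elements $\exp(Y_k)$ with $Y_k\in V_{s-1}$ solving $\sum_k\brkt{Y_k}{X_k}=\log z$ (Lemmas~\ref{lemma:correcting system} and~\ref{lemma:perturbation lemma}); the added length is $\lesssim\left(d(\idelem,z)^s/\Delta\right)^{1/(s-1)}$, which is \emph{superlinear} in $d(\idelem,z)\lesssim d(\gamma(\bar t+h_ja),\gamma(\bar t+h_jb))\approx h_j\abs{a-b}$ and hence $o(h_j)$, while the savings from the quotient shortcut are $\approx 2\eta h_j$. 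The contradiction is then with minimality of $\gamma$ over the \emph{larger} interval $[t_0,t_r]$, packaged as the modified triangle inequality of Proposition~\ref{prop:lower step triangle inequality} and the quantified estimate of Theorem~\ref{thm:quantified tangent}, which passes to the blowup limit. Without the auxiliary configuration of positive size and the $\brkt{V_{s-1}}{V_1}=V_s$ mechanism, the exponent $\tfrac{s}{s-1}>1$ --- which is what makes the argument close --- is simply not available.
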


This result implies the previously known ones from \cite{Hakavuori_LeDonne_2016} that corners are not minimizing and from \cite{Monti_Pigati_Vittone} that in the \subriemannian case one of the tangents is a line. In fact, iterating the above result, we get the following corollary.

\begin{corollary}\label{cor:tangent iteration}
	If $\gamma:I\to G$ is a geodesic and $t\in I$,
	then for every $\sigma\in\Tang^{s-1}(\gamma,t)$, the horizontal projection $\horproj\circ\sigma$ is a geodesic.
	In particular, if $G$ is \subriemannian then every $\sigma\in\Tang^{s-1}(\gamma,t)$ is a line.
\end{corollary}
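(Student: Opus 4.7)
The plan is to iterate Theorem~\ref{thm:tangent} along the canonical tower of Carnot quotients obtained by successively killing the topmost layer. For $k=0,1,\dots,s-1$ I would set $G^{(k)} := G/\exp(V_{s-k+1}\oplus\cdots\oplus V_s)$, with $G^{(0)}=G$ and $G^{(s-1)}\cong V_1$, and let $p_k\colon G\to G^{(k)}$ denote the canonical projection, so that $p_0=\operatorname{id}$, $p_1=\stepproj$, and $p_{s-1}=\horproj$. By Proposition~\ref{prop:quotient} each $G^{(k)}$ is a \subfinsler Carnot group of step $s-k$ whose topmost layer is $V_{s-k}$. The goal is then to prove by induction on $k\in\{1,\dots,s-1\}$ that for every $\sigma\in\Tang^{k}(\gamma,t)$ the curve $p_k\circ\sigma$ is a geodesic in $G^{(k)}$; specializing to $k=s-1$ gives the first statement of the corollary.

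The key observation driving the induction is that $p_k$ is a graded Lie group homomorphism, hence intertwines both left translations and Carnot dilations. Substituting this into the definition of the reparametrized curve yields $p_k\circ\eta_h=(p_k\circ\eta)_h$ for every 1-Lipschitz $\eta$ and every $h>0$, and passing to limits delivers the inclusion
\[ p_k\bigl(\Tang(\eta,\bar t)\bigr)\subset \Tang(p_k\circ\eta,\bar t) \]
inside $G^{(k)}$. The base case $k=1$ is Theorem~\ref{thm:tangent} itself. For the step from $k$ to $k+1$, take $\sigma\in\Tang^{k+1}(\gamma,t)$, so that $\sigma\in\Tang(\tau,0)$ for some $\tau\in\Tang^{k}(\gamma,t)$. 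The inductive hypothesis says $p_k\circ\tau$ is a geodesic in $G^{(k)}$, and the inclusion above puts $p_k\circ\sigma$ inside $\Tang(p_k\circ\tau,0)$. Applying Theorem~\ref{thm:tangent} inside the step-$(s-k)$ Carnot group $G^{(k)}$ then gives that the projection of $p_k\circ\sigma$ modulo the topmost layer $V_{s-k}$ is a geodesic in $G^{(k+1)}$, and this projection is precisely $p_{k+1}\circ\sigma$.

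Setting $k=s-1$ produces the first assertion. In the \subriemannian case the norm on $V_1$ is Euclidean, so a unit-speed geodesic in $V_1$ passing through the origin at time $0$ must be of the form $t\mapsto tv$ with $|v|=1$, whence $\horproj\circ\sigma(t)=tv$. Because $\sigma$ itself is 1-Lipschitz with $\sigma(0)=\idelem$ and
\[ |a-b|=\|\horproj\sigma(a)-\horproj\sigma(b)\|\le d(\sigma(a),\sigma(b))\le|a-b|, \]
the curve $\sigma$ is a unit-speed geodesic in $G$ whose horizontal development is the constant vector $v$; such a curve is the integral curve of the left-invariant extension of $v\in V_1$ and hence equals $t\mapsto\exp(tv)$, which is a line. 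The whole proof is essentially bookkeeping on top of Theorem~\ref{thm:tangent}: the only subtle ingredient is the tangent-projection commutation above, which is what lets the inductive hypothesis be carried through the tower $G\to G^{(1)}\to\cdots\to V_1$.
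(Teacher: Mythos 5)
Your proposal is correct and follows the same route as the paper, which proves the corollary precisely by inducting Theorem~\ref{thm:tangent} down the tower of quotients by successive top layers (the paper states this in one line; you have filled in the bookkeeping, including the needed commutation of projections with dilations and the identification of a unit-speed horizontal curve with constant horizontal development as a line in the \subriemannian case).
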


The results of Theorem~\ref{thm:tangent} and Corollary~\ref{cor:tangent iteration} also hold for a more general notion of tangents, usually called \emph{weak tangents}, see Theorem~\ref{thm:weak tangent} and Corollary~\ref{cor:weak tangent iteration}.
In the \subriemannian setting, since all infinite geodesics in step 2 are lines, Theorem~\ref{thm:sr tangent iteration} and Corollary~\ref{cor:tangent iteration} can be improved slightly, decreasing the number of iterations needed from $s$ and $s-1$ to $s-1$ and $s-2$, respectively.

As applications of the existence of a line tangent, we show that in every non-Abelian Carnot group where in the abelianization the  infinite geodesics are lines, there is always a geodesic that loses optimality whenever it is extended (see Proposition~\ref{Prop:lost:min}), and show that the non-minimality of corners holds also in the non-constant rank case (see Proposition~\ref{prop:non-constant rank corners}).

As mentioned in the introduction, every element in $\Tang(\gamma,t)$ is an infinite geodesic. We provide next other results that are valid for any infinite geodesics regardless of whether or not they are tangents.

\begin{theorem}\label{thm:geodesic blowdowns} If
	$\gamma:\mathbb{R}\to G$ is a geodesic such that $\horproj\circ\gamma:\mathbb{R}\to G/\brkt{G}{G}$ is not a geodesic, then there exist $R>0$ and a hyperplane $W\subset V_1$ such that $\image(\horproj\circ\gamma)\subset B_{V_1}(W,R)$.
\end{theorem}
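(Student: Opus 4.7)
The plan is to argue by contradiction. First I reformulate the conclusion: by duality in the normed space $V_1$, the image of $\bar\gamma := \pi\circ\gamma$ lies in $B_{V_1}(W,R)$ for some hyperplane $W$ and some $R>0$ if and only if there is a unit covector $\xi\in V_1^*$ such that $\xi\circ\bar\gamma$ has bounded range. I therefore assume $\gamma$ is a geodesic in $G$, $\bar\gamma$ is not a geodesic in $V_1$, and $\xi\circ\bar\gamma$ has unbounded range for every unit covector $\xi$, and I aim to construct a strictly shorter competitor to $\gamma$ on some interval. The first assumption provides $a<b$ and $\epsilon>0$ with $\|\bar\gamma(b)-\bar\gamma(a)\|_{V_1}\le b-a-\epsilon$, so the horizontal straight segment between these two points would save length $\epsilon$ compared with the horizontal arc-length of $\gamma|_{[a,b]}$. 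The third assumption, combined with compactness of the unit sphere of $V_1^*$, provides, for arbitrarily large $L$, pieces of $\gamma$ supported outside $[a,b]$ whose horizontal projections approximate translations of norm $L$ in an arbitrary spanning family of directions in $V_1$.

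The competitor I would construct goes from $\gamma(A)$ to $\gamma(B)$ for some large interval $[A,B]\supset[a,b]$ chosen to contain enough escape pieces. It has three ingredients: (i) the original $\gamma$ on the complement of the chosen region, (ii) the horizontal lift of the $V_1$-straight shortcut replacing $\gamma|_{[a,b]}$, which lands at a point $g$ with correct horizontal projection but with a higher-layer discrepancy $\zeta\in\exp(V_2\oplus\cdots\oplus V_s)$ compared to $\gamma(b)$, and (iii) a \emph{reshuffling} of the escape pieces so that, by the Baker--Campbell--Hausdorff formula, the change in the order of their horizontal concatenation produces iterated Lie-bracket commutators that cancel $\zeta$. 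Since $V_1$ generates the full Lie algebra $\g$, any prescribed higher-layer element $\zeta$ can in principle be matched by a sufficiently rich collection of nested commutators of $V_1$-vectors, which the escape pieces provide in abundance.

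The main obstacle is the quantitative length bookkeeping. A naive commutator construction realizing a $V_k$-discrepancy of size $O((b-a)^k)$ from scratch would cost $O(b-a)$ in extra horizontal length, far exceeding the saving $\epsilon$; the key leverage is that the escape pieces are \emph{already present} in $\gamma|_{[A,B]}$, so reordering them contributes no new horizontal length, only the transition arcs between reshuffled pieces. These transitions can be bounded uniformly in terms of the geometry of the reshuffling pattern, and by choosing the escape lengths $L$ large enough the required commutator pattern becomes one in which the total transition cost is strictly less than $\epsilon$, yielding a competitor of length $<B-A$ and contradicting $d(\gamma(A),\gamma(B))=B-A$. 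Making the reshuffling-plus-BCH bookkeeping airtight in the non-commutative setting, and verifying that the iterated commutators of the escape directions indeed generate all higher layers $V_2,\ldots,V_s$ in a length-efficient manner, is the technical heart of the proof.
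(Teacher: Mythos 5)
Your high-level strategy is the same as the paper's: make a saving of $\epsilon$ using the failure of $\horproj\circ\gamma$ to be minimizing, and pay for the resulting non-horizontal discrepancy using the fact that the curve spreads out in all horizontal directions (the negation of the hyperplane conclusion). But the mechanism you propose for cancelling the discrepancy has a genuine gap. A reshuffling of finitely many sub-arcs is a \emph{discrete} family of modifications, so the set of achievable commutator defects is finite; it cannot exactly hit an arbitrary prescribed $\zeta$ in the continuum $\exp(V_2\oplus\cdots\oplus V_s)$, and an inexact cancellation leaves a residual whose repair cost is not obviously below $\epsilon$. Any working version needs a \emph{continuous} family of corrections, and those necessarily add horizontal length; the entire content of the theorem is the quantitative claim that this added length tends to $0$ as the transverse spread of the configuration grows. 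This is precisely what the paper's Lemma~\ref{lemma:correcting system} and Lemma~\ref{lemma:perturbation lemma} provide: one solves $\sum_j\brkt{Y_j}{X_j}=Z$ with $Y_j\in V_{s-1}$ and bounds $\norm{Y_j}$ by $\norm{Z}/\Size(x_0,\dots,x_r)$ via the minimal height of the configuration (Lemma~\ref{lemma:Cramer:width}), then inserts the short arcs $\exp(Y_j)$ as conjugators; the cost is $\bigl(d(\idelem,k)^s/\Size\bigr)^{1/(s-1)}$, which is made smaller than any $\epsilon$ by Lemma~\ref{Euclidean:lemma} once no hyperplane neighborhood contains the projected image. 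You defer exactly this step as ``the technical heart,'' so the proof is not complete.

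A second, independent obstruction is that you try to cancel the full discrepancy $\zeta\in\exp(V_2\oplus\cdots\oplus V_s)$ in one pass. The conjugation/commutator bookkeeping only closes up cleanly when the correction lies in the \emph{center}: in the paper's computation the factors $\exp(\brkt{Y_j}{X_j})\in\exp(V_s)$ commute with everything and can be collected at the front of the product, which is what makes \eqref{eq:alphabeta product} an identity rather than an approximation. For a non-central $\zeta$ the inserted corrections interact with each other and with the original pieces, producing new lower-order errors that must themselves be corrected. The paper sidesteps this entirely: it never lifts a straight segment at all, but instead proves the modified triangle inequality of Proposition~\ref{prop:lower step triangle inequality}, which replaces one distance by the distance in $G/\exp(V_s)$, concludes that $\stepproj\circ\gamma$ is a (quasi-)geodesic, and then iterates through the quotients $G/\exp(V_s)$, $G/\exp(V_{s-1}\oplus V_s)$, \dots, peeling off one layer at a time over $s-1$ stages. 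Your argument would need either this layer-by-layer structure or a genuinely new device to handle the non-central error, and as written it has neither.
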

In the above theorem, we denote by $ B_{V_1}(W,R)$ the $R$-neighborhood of $W$ within $V_1$. To prove Theorem~\ref{thm:geodesic blowdowns} we shall adopt a wider viewpoint. In fact, we will consider rough geodesics and still have the same rigidity result (see Theorem~\ref{thm:quasigeod}). 

It is possible that the claim of Theorem~\ref{thm:geodesic blowdowns} could be strengthened to say that the projection of the geodesic is asymptotic to the hyperplane. In Corollary~\ref{cor:Engel geodesic asymptotic properties}, we show that this is true for the only known family of examples of non-line infinite geodesics, arising from the explicit study of geodesics in the Engel group, see \cite{Ardentov_Sachkov:engel_cut_time}. We also show that each of these geodesics is in a finite neighborhood of a line in the Engel group itself. However, by lifting the same geodesics to a step 4 Carnot group, we show that there exist infinite geodesics that are not in a finite neighborhood of any line (see Corollary~\ref{cor:geodesic not in a line nbhd}).

Since in Euclidean spaces the only infinite geodesics are the straight lines, an immediate consequence of Theorem~\ref{thm:geodesic blowdowns} is the following.

\begin{corollary}[Blowdown of geodesics]\label{cor:subriemannian blowdowns} If $\gamma$ is a geodesic in a \subriemannian Carnot group $G\neq \R$, then there exists a proper Carnot subgroup $H<G$ containing every element of $\Asymp(\gamma)$.
\end{corollary}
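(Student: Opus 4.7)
The plan is to combine Theorem~\ref{thm:geodesic blowdowns} with the Euclidean rigidity of infinite geodesics in order to show that $\horproj\circ\gamma$ stays in a bounded neighborhood of a proper linear subspace of $V_1$, and then blow down to locate $\Asymp(\gamma)$ inside the Carnot subgroup generated by this subspace. Since rank-$1$ Carnot groups are isomorphic to $\R$, the assumption $G\neq\R$ forces $\dim V_1\geq 2$. I would then split into cases: if $\horproj\circ\gamma:\R\to V_1$ is itself a geodesic, then, being an infinite geodesic in a Euclidean space, it must be a straight line, so its image lies in an $R$-neighborhood of the $1$-dimensional subspace $W\subsetneq V_1$ through $0$ parallel to it. Otherwise, Theorem~\ref{thm:geodesic blowdowns} provides a hyperplane $W\subsetneq V_1$ and an $R>0$ with $\image(\horproj\circ\gamma)\subset B_{V_1}(W,R)$. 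Either way, there exist a proper subspace $W\subsetneq V_1$ and an $R>0$ with $\image(\horproj\circ\gamma)\subset B_{V_1}(W,R)$.

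Next, I would fix $\sigma\in\Asymp(\gamma)$ realized as $\gamma_{h_j}\to\sigma$ with $h_j\to\infty$, and use that $\horproj$ intertwines the dilations (via $\horproj\circ\dil{1/h}=\tfrac{1}{h}\horproj$ in the linear structure on $V_1\cong G/\brkt{G}{G}$) to write
\[
\horproj(\gamma_h(t)) \;=\; \tfrac{1}{h}\bigl(\horproj(\gamma(\bar t+ht))-\horproj(\gamma(\bar t))\bigr).
\]
The two terms on the right lie within $R$ of $W$, so their difference lies within $2R$ of $W$, and hence the right-hand side lies in $B_{V_1}(W,2R/h)$; letting $h=h_j\to\infty$ forces $\horproj(\sigma(t))\in W$ for every $t$.

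Finally, I would take $\mathfrak{h}\subset\g$ to be the Lie subalgebra generated by $W$ and set $H=\exp(\mathfrak{h})$. The inclusion $[V_i,V_j]\subset V_{i+j}$ forces $\mathfrak{h}$ to inherit the Carnot grading of $\g$ with first layer $W$, so $H$ is a Carnot subgroup with $\mathfrak{h}\cap V_1=W\subsetneq V_1$; in particular $\mathfrak{h}\neq\g$, so $H$ is proper. To conclude $\sigma\subset H$, I would observe that $\sigma$ is $1$-Lipschitz and therefore horizontal, with $\tfrac{d}{dt}\horproj(\sigma(t))=\dot\sigma(t)$ under the identification $V_1\cong G/\brkt{G}{G}$; the containment $\horproj(\sigma(t))\in W$ for all $t$ then forces $\dot\sigma(t)\in W$ almost everywhere, so $\sigma$ is a horizontal curve of $H$ starting at $\idelem\in H$, and hence remains in $H$. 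The only subtlety I foresee is verifying that $H$ really is a Carnot subgroup rather than merely a closed subgroup, but this is immediate from $W$ being simultaneously the generator of $\mathfrak{h}$ and its first graded piece.
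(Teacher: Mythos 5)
Your proposal is correct and follows essentially the same route as the paper: apply Theorem~\ref{thm:geodesic blowdowns} (or, when $\horproj\circ\gamma$ is itself a geodesic, the fact that infinite Euclidean geodesics are lines) to trap $\image(\horproj\circ\gamma)$ in a bounded neighborhood of a proper subspace $W\subsetneq V_1$, observe that blowing down kills the bounded error so that $\horproj\circ\sigma$ lands in $W$, and conclude that $\sigma$ lies in the proper Carnot subgroup generated by $W$. The only cosmetic difference is that you merge the two cases into one ``neighborhood of a proper subspace'' statement, whereas the paper handles the case of a geodesic horizontal projection via the uniqueness of Euclidean blowdowns; both arguments are sound.
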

As with Theorem~\ref{thm:geodesic blowdowns}, Corollary~\ref{cor:subriemannian blowdowns} admits a generalization for rough geodesics (see Corollary~\ref{cor:subriemannian quasiblowdowns}). As a stepping stone to this generalization, we also prove that rough geodesics in Euclidean spaces have unique blowdowns (see Lemma~\ref{Euclid:rough:geodesic}).

Similarly as with Theorem~\ref{thm:tangent}, we can iterate Corollary~\ref{cor:subriemannian blowdowns} and deduce that some blowdown of an infinite geodesic in a \subriemannian Carnot group must be a line.
Furthermore, we show that in \subriemannian Carnot groups, every blowdown of an infinite geodesic is a line or an abnormal geodesic (see Proposition~\ref{prop:abnormal blowdowns}).

\subsection{Organization of the paper}
In Section~\ref{Sec2} we discuss technical lemmas based on linear algebra and our error correction procedure. We introduce the concepts of minimal height and size. Proposition~\ref{prop:lower step triangle inequality} 
is the crucial  estimate and is a variant of a triangle inequality with an error term depending on the notion of size. 
This proposition is the key ingredient for both the proof of Theorem~\ref{thm:tangent} and the proof of
Theorem~\ref{thm:geodesic blowdowns}. 

In Sections \ref{sec:UP} and \ref{sec:DOWN} we prove our main results. Section~\ref{sec:UP} covers our results about tangents of geodesics: Theorems \ref{thm:sr tangent iteration} and \ref{thm:tangent}, and Corollary~\ref{cor:tangent iteration}, and their stronger forms for weak tangents in Theorem~\ref{thm:weak tangent} and Corollary~\ref{cor:weak tangent iteration}.
We also give a quantified version in Theorem~\ref{thm:quantified tangent}, which 
expresses the  extent to which 
the projection of a geodesic may fail to be minimizing.
 Section~\ref{sec:DOWN} covers our results about infinite geodesics and blowdowns: Theorem~\ref{thm:geodesic blowdowns} and Corollary~\ref{cor:subriemannian blowdowns}, and their rough counterparts: Theorem~\ref{thm:quasigeod} and Corollary~\ref{cor:subriemannian quasiblowdowns}.

In Sections \ref{sec:infinite geodesics} and \ref{sec:exists line tangent} we discuss some applications of our main results. In Section~\ref{sec:infinite geodesics} we consider infinite geodesics, and prove the statement about abnormality of blowdowns (Proposition~\ref{prop:abnormal blowdowns}).
In Section~\ref{sec:exists line tangent} we consider applications of the existence of a line tangent. We prove the existence of non-extendable geodesics in non-abelian Carnot groups (Proposition~\ref{Prop:lost:min}) and the non-minimality of corners in non-constant rank \subriemannian manifolds (Proposition~\ref{prop:non-constant rank corners}).

In Section~\ref{sec:Sharp} we discuss to which extent one can expect an improvement of the blowdown result Theorem~\ref{thm:geodesic blowdowns}, restricting our attention to rank-2 Carnot groups. 
In Section~\ref{sec:Lines} we cover preliminaries on lines in Carnot groups and study when two lines are at bounded distance. In Section~\ref{sec:Engel} we consider the example of an infinite non-line geodesic in the Engel group. We use this curve to find a counter-example to one possible strengthening of Theorem~\ref{thm:geodesic blowdowns}.

\section{Preliminaries: minimal height, size, and error correction} \label{Sec2}

\subsection{Minimal height of a parallelotope and its properties}
\begin{definition}[Minimal height of a parallelotope]\label{defn:width}
Let $V$ be a normed vector space with distance $d_V$. The \emph{minimal height} of an $m$-tuple of points $(a_1,\dots,a_m)\in V^m $ is the smallest height of the parallelotope generated by the points, i.e., 
\[ 
\Width(a_1,\dots,a_m) = \min_{j\in\{1,\dots,m\}} d_V(a_j,\Span\{a_1,\dots,\hat{a}_j,\dots,a_m\}).\]
\end{definition}

\stepcounter{equation}
\begin{remarks*}
	\begin{enumerate}[wide, labelwidth=!, labelindent=0pt,label={\textbf{\arabic{section}.\arabic{equation}.\arabic*}},ref={\arabic{section}.\arabic{equation}.\arabic*}]
		\item\label{rmk:width:linear independence and width} Points $a_1,\dots,a_m$ in a normed vector space are linearly independent if and only if $\Width(a_1,\dots,a_m)\neq0$.
		\item\label{rmk:width:volume and width} Assume $V$ is a Euclidean space $\R^r$ and denote by $\vol_m$ the usual $m$-dimensional volume.
		Let $\PP(a_1,\dots,a_m)$ denote the parallelotope generated by the vectors $a_1,\dots,a_m$.
		Notice that the volume of $\PP( a_1,\dots,a_m )$ equals the volume of any base $\PP(a_1,\dots,\hat{a}_j,\dots,a_m)$
		times the corresponding height, which is $d(a_j,\Span\{a_1,\dots,\hat{a}_j,\dots,a_m\})$.
		Hence, we have
		\begin{eqnarray*}
			\Width(a_1,\dots,a_m) &=& \min_{j\in\{1,\dots,m\}} \frac{\vol_m \PP( a_1,\dots,a_m ) } { \vol_{m-1} \PP(a_1,\dots,\hat{a}_j,\dots,a_m) }\\
			&=& \frac{\vol_m \PP( a_1,\dots,a_m ) } { \max_{j\in\{1,\dots,m\}} \vol_{m-1} \PP(a_1,\dots,\hat{a}_j,\dots,a_m) }.
		\end{eqnarray*}
		Hence, if $\PP^*:=\PP(a_1,\dots,\hat{a}_j,\dots,a_m) $ is a face of the parallelotope with maximal $(m-1)$-dimensional volume, then
		\[ \Width(a_1,\dots,a_m) = \frac{\vol_m \PP( a_1,\dots,a_m ) } { \vol_{m-1} \PP^* } = d( a_j, \Span\PP^*). \]
	\end{enumerate}
\end{remarks*}
We next prove a basic lemma that uses the notion of minimal height to bound the entries of the inverse of a matrix. This bound will then be used in Lemma \ref{lemma:correcting system}.
\begin{lemma}\label{lemma:Cramer:width}
Let $A$ be a matrix with columns $A_1,\ldots,A_r\in\R^r$.
If $\Width(A_1,\dots,A_r)>0$, then $A$ is invertible and its inverse $B$ has entries $\matelem{B}{k}{j}$ bounded by
\[ \abs{ \matelem{B}{k}{j} }\leq \frac{1}{ \Width(A_1,\dots,A_r) } , \qquad \forall k,j=1,\dots,r. \]
\end{lemma}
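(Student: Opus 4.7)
The plan is to express each entry of $B = A^{-1}$ via Cramer's rule and then control the resulting determinants by the Euclidean volume identities collected in Remark~\ref{rmk:width:volume and width}. First, the hypothesis $\Width(A_1,\dots,A_r)>0$ together with Remark~\ref{rmk:width:linear independence and width} ensures that $A_1,\dots,A_r$ are linearly independent, so $A$ is invertible. Applying Cramer's rule to the linear system $A B_j = e_j$, where $B_j$ is the $j$-th column of $B$, one reads off
\[
B_{kj} \;=\; \frac{(-1)^{k+j}\,\det M_{jk}}{\det A},
\]
with $M_{jk}$ the $(r-1)\times(r-1)$ matrix obtained from $A$ by deleting row $j$ and column $k$.

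Next, I would interpret each determinant geometrically. The denominator is $|\det A| = \vol_r \PP(A_1,\dots,A_r)$. For the numerator, the columns of $M_{jk}$ are the images of $A_1,\dots,\hat{A}_k,\dots,A_r$ under the orthogonal projection $\R^r\to\R^{r-1}$ that drops the $j$-th coordinate. Since orthogonal projection onto a hyperplane is $1$-Lipschitz, it does not increase $(r-1)$-dimensional volume, yielding
\[
|\det M_{jk}| \;\leq\; \vol_{r-1}\PP(A_1,\dots,\hat{A}_k,\dots,A_r).
\]

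Combining the two estimates with the base-times-height formula recalled in Remark~\ref{rmk:width:volume and width}, and then with the definition of $\Width$ as a minimum over indices, gives the chain
\[
|B_{kj}| \;\leq\; \frac{\vol_{r-1}\PP(A_1,\dots,\hat{A}_k,\dots,A_r)}{\vol_r\PP(A_1,\dots,A_r)} \;=\; \frac{1}{d(A_k,\Span\{A_1,\dots,\hat{A}_k,\dots,A_r\})} \;\leq\; \frac{1}{\Width(A_1,\dots,A_r)},
\]
which is the desired bound.

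I do not anticipate any serious obstacle: the only points that require a moment of care are the sign and index conventions in Cramer's rule (one expands $\det A^{(k\to e_j)}$ along the modified $k$-th column, producing exactly the minor $M_{jk}$ with the cofactor sign $(-1)^{k+j}$), and the non-expansion of $(r-1)$-volume under orthogonal projection, which can be justified either by the Cauchy--Binet formula applied to the projection matrix or by a direct Gram determinant computation.
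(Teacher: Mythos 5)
Your proposal is correct and follows essentially the same route as the paper: invertibility via Remark~\ref{rmk:width:linear independence and width}, the cofactor formula for $B_{kj}$, the interpretation of the minor as the volume of a coordinate projection of a face of the parallelotope (bounded by $1$-Lipschitzness of the projection), and the base-times-height identity to reduce the volume ratio to a distance, which the definition of $\Width$ as a minimum then bounds from below. The only difference is an immaterial index convention in the minor (you delete row $j$, column $k$; the paper writes row $k$, column $j$), and the bound holds either way since $\Width$ minimizes over all indices.
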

\begin{proof}
The fact that $A$ is invertible follows from Remark~\ref{rmk:width:linear independence and width}. For the estimate on the entries of the inverse, we will use a well-known formula from linear algebra (see \cite[page 219]{Lang:linear_algebra_2}):
If $\matrixcomplement{A}{k}{j}$ denotes the matrix $A$ with row $k$ and column $j$ removed, then the entries of $B$ can be calculated by 
\begin{equation}\label{Cramer} 
\matelem{B}{k}{j} = (-1)^{k+j}\frac{\det \matrixcomplement{A}{k}{j}}{\det A}.
\end{equation}
Fix $j,k\in\{1,\dots,r\}$.
Let $P_k:\R^r\to \R^{r-1}$ be the projection that forgets the $k$-th coordinate:
	\[ P_k(y_1,\dots,y_r) := (y_1,\dots,\hat{y}_k,\dots,y_r). \]
Consider the following parallelotopes:	
Let $\PP$ be the $r$-parallelotope in $\R^r$ determined by the points $A_1,\dots,A_r$, let $\PP_j$ be the $(r-1)$-parallelotope in $\R^r$ determined by the same points excluding the vertex $A_j$, and let $\PP_j^k=P_k( \PP_j )$, which is an $(r-1)$-parallelotope in $\R^{r-1}$. 

The geometric interpretation of the determinant states that 
\[ \abs{\det A} = \vol_r(\PP)\quad \text{ and }\quad \abs{\det \matrixcomplement{A}{k}{j}} = \vol_{r-1}(\PP_j^k). \]
Moreover, since $P_k( \PP_j )= \PP_j^k$ and the projection $P_k$ is $1$-Lipschitz, we have
\[ \vol_{r-1}(\PP_j^k)\leq\vol_{r-1}( \PP_j ). \]
By these last two observations, we have that
\begin{equation}\label{eq:det quotient as volumes}
	\frac{\abs{\det \matrixcomplement{A}{k}{j}}}{\abs{\det A}} = \frac{\vol_{r-1}(\PP_j^k)}{\vol_r(\PP)} \leq \frac{\vol_{r-1}( \PP_j )}{\vol_r(\PP)}.
\end{equation}
Let $L_j:=\Span\{a_1,\dots,\hat{a}_j,\dots,a_m\}$.
Since $L_j$ is the span of $\PP_j$ and $\PP_j$ is a face of $\PP$, we calculate the volume of $\PP$ as in Remark~\ref{rmk:width:volume and width} as
\begin{equation}\label{eq:volume as height times area}	 
\vol_r(\PP) = d(a_j,L_j)\vol_{r-1}(\PP_j).
\end{equation}
By the definition of $\Width$ as the minimum of the distances $d(a_j,L_j)$, we conclude that
\[ \abs{\matelem{B}{k}{j}} 
\overset{\eqref{Cramer}}{=} \frac{\abs{\det\matrixcomplement{A}{k}{j}}}{\abs{\det A}}
\overset{\eqref{eq:det quotient as volumes}}{\leq} 
\frac{\vol_{r-1}(\PP_j)}{\vol_r(\PP)}
\overset{\eqref{eq:volume as height times area}}{=}
\frac{1}{d(a_j,L_j)}
\leq\frac{1}{\Width(A_1,\dots,A_r)}. \qedhere \]
\end{proof}

\begin{remark}\label{rmk:singular values}
	For our purposes, the notion of minimal height $$\Width(a_1,\ldots,a_r)$$ could also be replaced by the smallest singular value $$s_{\min}(A) = \min_{\norm{x}=1}\norm{Ax}$$ of the matrix $A=\begin{bmatrix}a_1 & \cdots & a_r\end{bmatrix}$. The relevant use of the minimal height is through the bound of Lemma~\ref{lemma:Cramer:width} for the entries of the inverse matrix of $A$. Since the operator norm $\norm{A^{-1}}$ is bounded by $1/s_{\min}(A)$, we would obtain a similar estimate as in Lemma~\ref{lemma:Cramer:width} up to a constant factor using the smallest singular value.
\end{remark}

\subsection{Size of a configuration and error correction}
\begin{definition}\label{def:submetry}
	A map $\pi: X\to Y$ between metric spaces is a \emph{submetry} if $\pi(B(x,r)) = B(\pi(x),r)$ for all $x\in X$ and $r>0$.
\end{definition}

\begin{proposition}\label{prop:quotient}
	On $G/\brkt{G}{G}$ and on $G/\exp(V_s)$ there are canonical structures of \subfinsler Carnot groups such that the projections $\horproj:G\to G/\brkt{G}{G}$ and $\stepproj:G\to G/\exp(V_s)$ are submetries. 
	In particular, for any $g_1,g_2\in G$ there exists $h\in\exp(V_s)$ such that
	\[ d(\stepproj(g_1),\stepproj(g_2)) = d(g_1,hg_2). \]
\end{proposition}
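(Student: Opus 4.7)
The plan is to build the canonical structures by pushing the stratification and norm through the quotient maps, and then to derive the distance formula from a submetry property that follows from length-preserving horizontal lifts.

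First I would identify the kernels as coming from graded ideals. The subgroup $[G,G]$ is $\exp(V_2\oplus\cdots\oplus V_s)$ and the subgroup $\exp(V_s)$ corresponds to the (central) graded ideal $V_s$. In both cases the quotient Lie algebra inherits a stratification: $\g/[\g,\g]$ has a single layer, and $\g/V_s$ has layers $V_1,\dots,V_{s-1}$. For $s\geq 2$ the projection $\g\to\g/V_s$ restricts to a linear isomorphism from $V_1$ to the first layer of the quotient, and similarly for the projection onto $\g/[\g,\g]$; I would use these isomorphisms to transport the norm on $V_1$ to the first layer of the quotient. This gives canonical sub-Finsler Carnot structures on $G/[G,G]$ and $G/\exp(V_s)$. (The case $s=1$ for the last-layer quotient is trivial.)

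Next I would verify the submetry property for each projection $\pi\colon G\to G/N$, where $N$ is either $[G,G]$ or $\exp(V_s)$. One direction is clear: projecting a horizontal curve in $G$ yields a horizontal curve in $G/N$ of no greater length, so $d_{G/N}(\pi(g_1),\pi(g_2))\leq d(g_1,g_2)$. For the opposite inequality I would use the horizontal lift: a horizontal curve $\bar\gamma$ in $G/N$ is specified by its horizontal velocity $\bar v(t)$, and I can lift $\bar\gamma$ to a horizontal curve $\gamma$ in $G$ starting at any prescribed preimage of $\bar\gamma(0)$ by taking the velocity to be $\iota(\bar v(t))$, where $\iota$ is the inverse of the isomorphism $V_1\to V_1(G/N)$ fixed in the previous step. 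Since $\iota$ is a linear isometry of normed spaces, the length of $\gamma$ equals that of $\bar\gamma$. Taking $\bar\gamma$ to be a near-minimizer from $\pi(g_1)$ to $\pi(g_2)$ and lifting it starting at $g_1$ produces a curve in $G$ from $g_1$ to some preimage of $\pi(g_2)$ of essentially the same length, establishing the submetry property.

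For the final statement I would apply the submetry property of $\stepproj$ together with the fact that $\exp(V_s)$ is a central (hence normal) subgroup, so the fiber $\stepproj^{-1}(\stepproj(g_2))$ equals $\exp(V_s)\cdot g_2$. Submetry gives
\[ d\bigl(g_1,\exp(V_s)\cdot g_2\bigr) = d\bigl(\stepproj(g_1),\stepproj(g_2)\bigr), \]
and this infimum is attained because $G$ is a proper metric space and $\exp(V_s)\cdot g_2$ is closed; any minimizer has the form $h g_2$ with $h\in\exp(V_s)$, yielding the claim. The main obstacle is making the horizontal lift argument clean, in particular keeping track of the identification of norms so that the lifted curve has exactly the same length as the downstairs curve; once that is in place, the sub-Finsler Carnot structure and the distance formula both fall out quickly.
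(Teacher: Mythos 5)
Your proposal is correct and fills in, with the standard horizontal-lift argument, exactly the well-known proof that the paper itself only cites (Berestovskii and Le Donne--Rigot): the key points in both are normality of $\brkt{G}{G}$ and $\exp(V_s)$, the induced stratification and transported norm on the quotient, and the identification of the quotient distance with the distance to the fiber. The final attainment step via properness of $G$ and closedness of the coset $\exp(V_s)g_2$ is also correct, so nothing further is needed.
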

\begin{proof}
	This proof is well known. It probably goes back to Berestovskii \cite[Theorem 1]{b1}. The key point here is that both $\exp(V_s)$ and $\brkt{G}{G}$ are normal subgroups $N\triangleleft G$. Thus one can define the distance between two points $p,q\in G/N$ in the quotient as the distance 
	\begin{equation*}
	d_{G/N}(p,q) = d(\pi^{-1}(p),\pi^{-1}(q))
	\end{equation*}
	between their preimages under the projection $\pi:G\to G/N$. 
	The reader can find the details in \cite[Corollary 2.11]{LeDonne_Rigot_rmknobcp}.
	
	With this definition, the resulting metric spaces are necessarily \subfinsler Carnot groups by the characterization of \cite{LeDonne_characterization}. Moreover for all $g_1,g_2\in G$ we have $d_{G/N}(\pi(g_1),\pi(g_2))\leq d(g_1,g_2)$ and by local compactness there exists a point $g_2'\in\pi^{-1}(\pi(g_2))$ such that
	\begin{equation*}
	d_{G/N}(\pi(g_1),\pi(g_2)) = d(g_1,g_2').
	\end{equation*}
	Since $g_2$ and $g_2'$ are in the same coset, we find that $g_2' = hg_2$ for some $h\in N$.
\end{proof}

Note that $G/[G,G]$ is abelian, so the canonical \subfinsler structure on $G/[G,G]$ is the structure of a normed vector space.

\begin{definition}[Size of a configuration]
	Let $G$ be a \subfinsler Carnot group. The \emph{size} of an $(m+1)$-tuple of points $(g_0,\dots,g_m)\in G^{m+1}$ is
	\begin{equation}\label{eqdef:size}
		\Size(g_0,\dots,g_m) = \Width (\horproj(g_1)-\horproj(g_0),\horproj(g_2)-\horproj(g_1),\dots,\horproj(g_m)-\horproj(g_{m-1})).
	\end{equation}
\end{definition}

\begin{remark}\label{rmk:width:general position and size}
	Remark \ref{rmk:width:linear independence and width} states that non-zero $\Width$ characterizes linear independence of points. Analogously, $\Size(g_0,\dots,g_m)\neq 0$ if and only if the horizontal projections $\horproj(g_0),\dots,\horproj(g_m)\in G/\brkt{G}{G}$ are in general position.
\end{remark}

The reason to consider this notion of size stems from Lemma~\ref{lemma:perturbation lemma} below, which describes our error correction procedure. Within this lemma, we need to bound the norms of solutions to a certain linear system. A convenient bound is given in Lemma~\ref{lemma:correcting system} in terms of the size of a configuration of points. This dependence of the bound of solutions on the size of a configuration is the reason we are able to give restrictions on the behavior of tangents and asymptotic cones of geodesics.

\begin{lemma}[Linear system of corrections]\label{lemma:correcting system}
	For every Carnot group $G$ of rank $r$ and step $s\geq2$, there exists a constant $K>0$ with the following property:
	
	Let $x_0, \ldots, x_r\in G$ and $X_j:= \log( x_{j-1}^{-1} x_j)$, for $j=1,\ldots, r$. 
	If $\Size(x_0, \ldots, x_r) >0$, then for every $Z \in V_{s}$ there exist $Y_1,\dots,Y_{r}\in V_{s-1}$ such that
	\begin{equation}\label{lemma:correcting1}
	\brkt{Y_1}{X_1}+\dots+\brkt{Y_{r}}{X_r}=Z 
	\end{equation}
	and 
	\begin{equation}\label{lemma:correcting2}
	d(\idelem,{\exp(Y_j)})^{s-1}\leq K
	\frac{d(\idelem,{\exp(Z)})^{{s}}}{\Size(x_0, \ldots, x_r)}, \qquad \forall j\in 1,\ldots, r.
	\end{equation}
\end{lemma}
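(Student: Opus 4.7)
The plan is to reduce~\eqref{lemma:correcting1} to an equation in $V_s$ involving only the horizontal increments $A_j := \horproj(x_j)-\horproj(x_{j-1})\in V_1$, solve that linear equation by Cramer's rule (Lemma~\ref{lemma:Cramer:width}), and finally translate the resulting linear-algebra estimate into the metric bound~\eqref{lemma:correcting2} by comparing homogeneous ``norms'' on each layer.

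First I would decompose $X_j=A_j+X_j^{\geq 2}$ along the stratification. Since $Y_j\in V_{s-1}$ and $[V_{s-1},V_k]\subseteq V_{s-1+k}=\{0\}$ for every $k\geq 2$, the bracket $[Y_j,X_j]$ collapses to $[Y_j,A_j]$, so~\eqref{lemma:correcting1} is equivalent to $\sum_{j=1}^{r}[Y_j,A_j]=Z$. The hypothesis $\Size(x_0,\ldots,x_r)>0$ means (by Remark~\ref{rmk:width:linear independence and width}) that $A_1,\ldots,A_r$ are linearly independent and therefore form a basis of $V_1$.

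Next I would fix once and for all a basis $e_1,\ldots,e_r$ of $V_1$, a Euclidean norm $\|\cdot\|$ on $\g$ having the layers $V_i$ orthogonal, and a linear right inverse of the surjective bracket map $V_{s-1}^r\to V_s$, $(\tilde Y_1,\ldots,\tilde Y_r)\mapsto \sum_k[e_k,\tilde Y_k]$; surjectivity is the stratification identity $[V_1,V_{s-1}]=V_s$. This produces, for every $Z\in V_s$, elements $\tilde Y_k\in V_{s-1}$ with $Z=\sum_k[e_k,\tilde Y_k]$ and $\|\tilde Y_k\|\leq C_1\|Z\|$, where $C_1$ depends only on $G$. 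Writing $e_k=\sum_j B_{jk}A_j$ in the basis $A_1,\ldots,A_r$ and applying Lemma~\ref{lemma:Cramer:width} to the matrix of the $A_j$'s expressed in the basis $e_1,\ldots,e_r$ (after absorbing the equivalence constant between $\|\cdot\|$ and the sub-Finsler norm on $V_1$ into a constant $C_2$) gives $|B_{jk}|\leq C_2/\Size(x_0,\ldots,x_r)$. The choice $Y_j:=-\sum_k B_{jk}\tilde Y_k\in V_{s-1}$ then satisfies $\sum_j[Y_j,A_j]=\sum_{j,k}B_{jk}[A_j,\tilde Y_k]=\sum_k[e_k,\tilde Y_k]=Z$, and the triangle inequality yields $\|Y_j\|\leq C_3\|Z\|/\Size(x_0,\ldots,x_r)$.

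Finally, I would upgrade this linear estimate to~\eqref{lemma:correcting2} by comparing, on each layer $V_k$, the Euclidean norm $\|\cdot\|$ with the sub-Finsler quantity $Y\mapsto d(\idelem,\exp(Y))^k$. For $Y\in V_k$ one has $\exp(tY)=\dil{t^{1/k}}\exp(Y)$, so both functions are continuous, positive off $0$, and linearly homogeneous under $Y\mapsto tY$; any two such functions on a finite-dimensional vector space are comparable up to a constant depending only on $G$. Hence $d(\idelem,\exp(Y_j))^{s-1}\asymp\|Y_j\|$ on $V_{s-1}$ and $\|Z\|\asymp d(\idelem,\exp(Z))^s$ on $V_s$, and the bound on $\|Y_j\|$ transports to~\eqref{lemma:correcting2} with a new constant $K$ depending only on $G$. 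The main obstacle — and the only source of the $\Size$ factor in the bound — is the Cramer step: the entries of the change-of-basis matrix blow up precisely when the horizontal increments $A_1,\ldots,A_r$ become nearly linearly dependent, which is exactly what $\Size$ measures.
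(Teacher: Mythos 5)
Your proposal is correct and takes essentially the same route as the paper's proof: reduce $[Y_j,X_j]$ to $[Y_j,A_j]$ via $[V_{s-1},\g^2]=0$, solve $Z=\sum_k[e_k,\tilde Y_k]$ with a bounded right inverse of the bracket map, convert to the basis of horizontal increments via Cramer's rule (Lemma~\ref{lemma:Cramer:width}) with the $\Size$ factor entering exactly through the inverse-matrix bound, and finish by comparing the homogeneous quantities $d(\idelem,\exp(\cdot))^{s-1}$ and $d(\idelem,\exp(\cdot))^{s}$ with linear norms on $V_{s-1}$ and $V_s$. The only differences are notational (your sign convention for $Y_j$ and writing $e_k=\sum_j B_{jk}A_j$ rather than $Y_j=\sum_l B_{jl}W_l$).
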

\begin{proof}
Fix arbitrary norms on the vector spaces $V_{s-1}$ and $V_{s}$, and denote them generically as $\norm{\cdot}$.
Observe that the functions
\[ W\mapsto d(\idelem,\exp(W) )^{s-1} \quad\text{and}\quad
{Z}\mapsto d(\idelem,\exp(Z))^{s} \]
are 1-homogeneous with respect to scalar multiplication.
Therefore there exists a constant $C_1>1$ such that 
\begin{equation}\label{normCuno}
\norm{W}\simeq_{C_1} d(\idelem,\exp(W ) )^{s-1} \quad\text{and}\quad
\norm{Z}\simeq_{C_1} d(\idelem,\exp(Z))^{s},
\end{equation} 
where $a \simeq_{c} b $ stands for $b/c\leq a \leq cb$. 

Fix a basis $\bar X_1,\dots,\bar X_{r}$ of $V_1$.
Observe that the map $(W_1,\ldots,W_r)
\mapsto\brkt{W_1}{\bar X_1}+\dots+\brkt{W_r}{\bar X_r}$ is a linear surjection between the normed vector spaces $(V_{s-1} )^r$ and $V_{s}$, where on $(V_{s-1})^r$ we use the norm $ \max_{ i=1,\ldots,r }\{\norm{W_i}\}$.
Thus the map can be restricted to some subspace so that it becomes a biLipschitz linear isomorphism.
In other words, there exists a constant $C_2>1$ such that for all 
$Z \in V_{s}$ there exist vectors $W_1,\dots,W_{r}\in V_{s-1}$ such that
\begin{equation}\label{solution:W}
Z=\brkt{W_1}{\bar X_1}+\dots+\brkt{W_r}{\bar X_r}
\end{equation}
and
\begin{equation}\label{bound:for:W} 
\max_{i=1,\ldots,r}\{\norm{ W_i }\} \simeq_{ C_2}\norm{Z}.
\end{equation}
The choice of the basis $\bar X_1,\dots,\bar X_{r}$ lets us identify $G/\brkt{G}{G}$ with $\R^r$ via the linear isomorphism $\phi:\R^r \to G/\brkt{G}{G}$ defined by
\[ \phi\left(\xi_{1},\dots,\xi_{r} \right):=\exp(\xi_{1}\bar X_1+\dots+\xi_{r}\bar X_r + \g^2), \]
where $\g^2=V_{2}\oplus\dots\oplus V_{s}$ and so $\exp( \g^2 )=\brkt{G}{G}$.
As a linear isomorphism, for some $C_3>1$, the map $\phi$ is a $C_3$-biLipschitz equivalence between $\R^r$ with the standard metric and $G/\brkt{G}{G}$ with the quotient metric. Consequently, we have
\begin{equation}\label{bilip:width}
\Width(a_1, \ldots, a_r)\simeq_{C_3} \Width(\phi(a_1), \ldots,\phi( a_r)) \quad \forall a_1, \ldots, a_r \in \R^r.
\end{equation}
We now show that the constant $K:=rC_1^2C_2 C_3$ satisfies the conclusion of the lemma. Take an arbitrary $Z \in V_{s}$ and write it as in \eqref{solution:W} for some $W_1,\dots,W_{r}\in V_{s-1}$ satisfying the bound \eqref{bound:for:W}.

Given points $x_0, \ldots, x_r\in G$ with $\Size(x_0,\dots,x_r)>0$, let $v_0, \ldots, v_r\in \R^r$ be such that $\phi( v_j) = \horproj(x_j)$ and write $v_j=\left(v_{j,1},\dots,v_{j,r} \right)$.
In other words, 
\[ x_j\in \exp\Big( \sum_{k=1}^{r} v_{j,k} \bar X_k + \g^2\Big). \]
Let $A$ be the $r\times r$ matrix whose $j$-th column is $A_j := v_{j}-v_{j-1}$,
so that
\begin{equation}\label{eq:A matrix defn}
	x_{j-1}^{-1} x_j \in \exp\Big( \sum_{k=1}^{r}(v_{j,k}-v_{j-1,k})\bar X_k + \g^2\Big)
	=\exp\Big( \sum_{k=1}^{r} \matelem{A}{k}{j} \bar X_k + \g^2\Big).
\end{equation}
The bound \eqref{bilip:width} combined with linearity of $\phi$ implies that the quantity ${\Width(A_1,\dots,A_r)}$ is comparable to $\Size(x_0, \ldots, x_r)$:
\begin{alignat*}{2}
\Width(A_1,\dots,A_r) &=& &\Width(v_{1}-v_{0},\dots,v_{r}-v_{r-1})\\
&\simeq_{C_3}& &\Width(\phi(v_{1}-v_{0}),\dots,\phi(v_{r}-v_{r-1}))\\
&=& &\Width(\phi(v_{1})-\phi(v_{0}),\dots,\phi(v_{r})-\phi(v_{r-1}))\\
&=& &\Width(\horproj (x_{1})-\horproj (x_{0}),\dots,\horproj (x_{r})-\horproj (x_{r-1}))\\
&=& &\Size(x_0, \ldots, x_r).
\end{alignat*}
In particular, $\Width(A_1,\dots,A_r)>0$ so we further deduce by Lemma~\ref{lemma:Cramer:width} that $A$ is invertible and its inverse $B$ satisfies
\begin{equation}\label{201801102} 
\abs{\matelem{B}{j}{l}}\leq \frac{1}{\Width(A_1,\dots,A_r)} 
\leq\frac{C_3}{\Size(x_0, \ldots, x_r)}.
\end{equation} 
Set $Y_j:=\sum_{l=1}^{r}\matelem{B}{j}{l}W_l$.
We shall verify that this choice of $Y_j$'s satisfies the conclusion of the lemma, i.e., the properties \eqref{lemma:correcting1} and \eqref{lemma:correcting2}.
	
The first property is deduced from bilinearity of the Lie bracket and the fact that $AB$ is the identity matrix. By \eqref{eq:A matrix defn}, we can write the vectors $X_j$ as sums
\[ X_j = \log( x_{j-1}^{-1} x_j) =\sum_{k=1}^{r} \matelem{A}{k}{j}\bar X_k + \mathfrak{g}^2.  \]
Since $\brkt{V_{s-1}}{\mathfrak{g}^2} = \brkt{V_{s-1}}{V_2\oplus\dots\oplus V_s} = 0$, it follows by bilinearity of the bracket that
\[ \sum_{j=1}^{r}\brkt{Y_j}{X_j} = \sum_{j=1}^{r}\brkt{\sum_{l=1}^{r}\matelem{B}{j}{l}W_l}{\sum_{k=1}^{r} \matelem{A}{k}{j}\bar X_k} = \sum_{k=1}^{r} \sum_{l=1}^{r} \sum_{j=1}^{r} \matelem{A}{k}{j} \matelem{B}{j}{l} \brkt{W_l}{\bar X_k}. \]
Using the fact that $AB$ is the identity matrix, we have $\sum_{j=1}^{r}\matelem{A}{k}{j} \matelem{B}{j}{l} = \delta_{kl}$, so the sum simplifies to
\[ \sum_{j=1}^{r}\brkt{Y_j}{X_j} = \sum_{k=1}^{r}\brkt{W_k}{\bar X_k} \overset{\eqref{solution:W}}{=} Z, \]
showing property~\eqref{lemma:correcting1}.

Regarding, property~\eqref{lemma:correcting2}, we first observe that estimating each $\norm{W_l}$ by \eqref{bound:for:W} and each $\abs{\matelem{B}{j}{l}}$ by \eqref{201801102}, we can bound $\norm{Y_j}$ by
\begin{align*}
\norm{Y_j} = \norm{\matelem{B}{j}{l}W_l} &\leq \sum_{l=1}^{r}\abs{\matelem{B}{j}{l}}\norm{W_l}
\\&\leq \sum_{l=1}^{r} \frac{C_2C_3}{\Size(x_0,\dots,x_r)}\norm{Z}
\\&=\frac{rC_2C_3}{\Size(x_0,\dots,x_r)}\norm{Z}.
\end{align*}
Then, using \eqref{normCuno} to give bounds for $\norm{Y_j}$ and $\norm{Z}$, we conclude that
\begin{align*}
C_1^{-1}d(\idelem,\exp(Y_j))^{s-1} \leq \norm{Y_j} &\leq \frac{rC_2C_3}{\Size(x_0,\dots,x_r)}\norm{Z} 
\\&\leq \frac{rC_1C_2C_3}{\Size(x_0,\dots,x_r)}d(\idelem,\exp(Z))^{s}.
\end{align*}
Hence the lemma holds with the proposed constant $K=rC_1^2C_2 C_3$.
\end{proof}

As mentioned before, the following lemma describes our error correction procedure. The strategy is the same as used before in \cite{Leonardi-Monti,Hakavuori_LeDonne_2016,Monti_Pigati_Vittone}. The geometric idea is that given a horizontal curve we perturb it adding an amount of length that depends on two factors: 
\begin{enumerate}[label=(\roman*)]
	\item the desired change ($k\in G$) in the endpoint of the curve, and
	\item the size of configuration of points ($x_0,\dots,x_r\in G$) that the curve passes through.
\end{enumerate}
However, instead of writing the argument using the language of curves, we write it as a form of a triangle inequality. The horizontal curve should be thought of as replaced by the points $x_0,\dots,x_r$ along the curve.
The benefits of this approach are twofold. First, we avoid having to worry about some technicalities, such as the parametrization of the curve or the concept of inserting one curve within another. Second, a triangle-inequality form is well suited to large-scale geometry, where the local behavior of horizontal curves is irrelevant. This allows us to immediately apply our argument in the asymptotic case not only to geodesics, but to rough geodesics as well.

\begin{lemma}\label{lemma:perturbation lemma}
	For every Carnot group $G$ of rank $r$ and step $s\geq 2$, there exists a constant $C>0$ with the following property:	
	
	Let $x_0, \ldots, x_r\in G$ and $k\in \exp(V_{s})$. If $\Size(x_0, \ldots, x_r)>0$, then
	\[ d(x_0, k x_r) \leq C\left(\frac{d(\idelem,k)^s}{\Size(x_0,\ldots,x_r)}\right)^{\frac{1}{s-1}}+ \sum_{j=1}^{r} d( x_{j-1}, x_j). \]
\end{lemma}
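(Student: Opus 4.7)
The plan is to build an explicit discrete path from $x_0$ to $kx_r$ that shadows the broken line $x_0 \to x_1 \to \dots \to x_r$, inserting a small corrective detour at each leg in order to absorb the prescribed element $k \in \exp(V_s)$.

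First I would set $\xi_j := x_{j-1}^{-1}x_j$, $X_j := \log \xi_j$, and apply Lemma~\ref{lemma:correcting system} to $Z := \log k \in V_s$ to produce vectors $Y_1,\dots,Y_r \in V_{s-1}$ satisfying $\sum_{j=1}^{r}\brkt{Y_j}{X_j} = Z$ together with the bound
\[ d(\idelem,\exp(Y_j))^{s-1} \le K\,\frac{d(\idelem,k)^s}{\Size(x_0,\dots,x_r)}. \]
Write $y_j := \exp(Y_j)$ and define waypoints by $w_0 := x_0$ and $w_j := w_{j-1}\cdot (y_j\xi_j y_j^{-1})$.

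The key algebraic step is to verify the identity
\[ y_j \xi_j y_j^{-1} \xi_j^{-1} = \exp(\brkt{Y_j}{X_j}) \]
via a short Baker--Campbell--Hausdorff computation. Writing $X_j = X_j^{(1)}+\dots+X_j^{(s)}$ with $X_j^{(i)} \in V_i$, the relation $[V_{s-1},V_i]\subseteq V_{s-1+i} = 0$ for $i\ge 2$ forces $\brkt{Y_j}{X_j} = \brkt{Y_j}{X_j^{(1)}} \in V_s$. Since $V_s$ is central in $\g$, every iterated bracket of order $\ge 3$ in $Y_j$ and $X_j$ vanishes, so BCH collapses after the quadratic term and yields the stated commutator identity. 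Because $\exp(V_s)$ is central in $G$, a straightforward induction using this identity gives
\[ w_j = \exp\bigl(\brkt{Y_1}{X_1}+\dots+\brkt{Y_j}{X_j}\bigr)\cdot x_j, \]
and in particular $w_r = \exp(Z)\cdot x_r = k\,x_r$.

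The final estimate then follows from the triangle inequality along the waypoints:
\[ d(x_0, kx_r) = d(w_0, w_r) \le \sum_{j=1}^{r} d(\idelem, y_j\xi_j y_j^{-1}) \le \sum_{j=1}^{r}\bigl(2\,d(\idelem, y_j) + d(x_{j-1},x_j)\bigr). \]
Substituting the bound on $d(\idelem, y_j)$ from Lemma~\ref{lemma:correcting system} into the first sum and absorbing the resulting factor $2rK^{1/(s-1)}$ into a single constant $C$ delivers the claimed inequality. The main obstacle is the BCH identity: it is essential that the commutator equal $\exp(\brkt{Y_j}{X_j})$ \emph{exactly}, with no surviving higher-order correction, since the telescoping of the $w_j$'s relies on the product $\prod_{j}\exp(\brkt{Y_j}{X_j})$ recovering $k$ on the nose. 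Once the layer-counting argument confirms this, the remainder of the proof is a routine triangle inequality together with bookkeeping of constants.
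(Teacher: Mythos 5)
Your proposal is correct and follows essentially the same route as the paper: invoke Lemma~\ref{lemma:correcting system}, use that $Y_j\in V_{s-1}$ forces the group commutator $y_j\xi_jy_j^{-1}\xi_j^{-1}$ to equal $\exp(\brkt{Y_j}{X_j})$ exactly, telescope using centrality of $\exp(V_s)$, and finish with the triangle inequality. The only (cosmetic) difference is that you bound each conjugated leg by $2d(\idelem,y_j)+d(x_{j-1},x_j)$ directly rather than interleaving separate correction legs $\beta_j=y_{j-1}^{-1}y_j$ as the paper does, which even yields the marginally better constant $C=2rK^{1/(s-1)}$.
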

\begin{proof}
Let $K$ be the constant from Lemma~\ref{lemma:correcting system} for the group $G$. We claim that the constant $C:=2(r+1)K^{\frac{1}{s-1}}$ will satisfy the statement of the current lemma.
Given $x_0, \ldots, x_r\in G$ and $k\in \exp(V_{s})$, we apply Lemma~\ref{lemma:correcting system} with $Z:=\log(k)$ and $X_j:= \log( x_{j-1}^{-1} x_j)$, for $j=1,\ldots, r$.
We get the existence of $Y_1,\dots,Y_{r}\in V_{s-1}$ satisfying \eqref{lemma:correcting1} and the bound \eqref{lemma:correcting2}. 

Define the following points in $G$:
\[ \begin{array}{cclcc}
&&y_j:=\exp(Y_j), \quad \text{for }j=1,\ldots, r;&&\\
\alpha_0:=x_0, && \alpha_j := x_{j-1}^{-1} x_j , \quad \text{for }j=1,\ldots, r;&&\\
\beta_0:=y_1 ,&& \beta_j: = y_{j-1}^{-1} y_j , \quad \text{for }j=1,\ldots, r-1 ,&& \beta_r:= y_r^{-1}. \\
\end{array} \]
Since $Y_j\in V_{s-1}$, by the BCH formula we have 
\[ \mathrm{C}_{y_j}( \alpha_j ) = y_j \alpha_j y_{j }^{-1} = 
y_j \alpha_j y_{j }^{-1} \alpha_j^{-1} \alpha_j 
= \exp(\brkt{Y_j}{X_j}) \alpha_j, \]
where $\mathrm{C}_{y}$ denotes the conjugation by $y$.
Consequently, since $\exp(\brkt{Y_j}{X_j})\in \exp(V_s)$ commutes with everything, we have
\begin{align*}
\prod_{j=0}^r(\alpha_j\beta_j)
\nonumber&=\alpha_0 \beta_0 \alpha_1 \beta_1 \alpha_2 \beta_{2}\cdots \alpha_r \beta_{r}
\\\nonumber&=
\alpha_0 y_1 \alpha_1 y_{1}^{-1} y_2 \alpha_2 y_{2}^{-1}\cdots y_r \alpha_r y_{r}^{-1}
\\\nonumber&=
\alpha_0 \mathrm{C}_{y_1}( \alpha_1 ) \mathrm{C}_{ y_2}( \alpha_2)\cdots \mathrm{C}_{y_r}( \alpha_r)
\\&=
\alpha_0 \exp(\brkt{Y_1}{X_1}) \alpha_1 \exp(\brkt{Y_2}{X_2}) \alpha_2 \cdots \exp(\brkt{Y_r}{X_r}) \alpha_r
\\\nonumber&=
\exp(\brkt{Y_1}{X_1}) \exp(\brkt{Y_2}{X_2})\cdots \exp(\brkt{Y_r}{X_r}) \alpha_0 \alpha_1 \alpha_2 \cdots \alpha_r.
\end{align*}
Observe that a product of exponentials is the exponential of a sum for elements in $V_s$ and that the points $\alpha_j$ form the telescopic product $x_r = \alpha_0 \alpha_1 \alpha_2 \cdots \alpha_r$.
Thus the above identity simplifies to
\begin{equation}\label{eq:alphabeta product}
	\prod_{j=0}^r(\alpha_j\beta_j) 
	=\exp(\brkt{Y_1}{X_1}+\brkt{Y_2}{X_2}+\ldots + \brkt{Y_r}{X_r}) x_r
	\overset{\eqref{lemma:correcting1}}{=}\exp(Z)x_r=kx_r.
\end{equation}
By the definition of the points $\alpha_j$ for $j=1,\ldots,r$, we have
\begin{equation}\label{eq:dist alpha}
	d(\idelem,{\alpha_j}) = d( x_{j-1}, x_j),
\end{equation}
and for the points $\beta_j$ for $j=0,\ldots, r$, we have from \eqref{lemma:correcting2} the distance estimate
\begin{equation}\label{eq:dist beta}
	d(\idelem,{\beta_j}) \leq 2K^{\frac{1}{s-1}}\left(\frac{d(\idelem,k)^s}{\Size(x_0,\dots,x_r)}\right)^{\frac{1}{s-1}}.
\end{equation}
Combining \eqref{eq:alphabeta product}, \eqref{eq:dist alpha} and \eqref{eq:dist beta} we have that
\begin{eqnarray*}
d(x_0, k x_r) &\overset{\eqref{eq:alphabeta product}}{=}&
d(x_0, \Pi_{j=0}^r(\alpha_j\beta_j)) 
\\&=&
d(\idelem, \beta_0\Pi_{j=1}^r(\alpha_j\beta_j))
\\&\leq& d(\idelem,{\beta_0}) + \sum_{j=1}^r d(\idelem,{\beta_j} ) + \sum_{j=1}^r d(\idelem,{\alpha_j})
\\&\overset{{\eqref{eq:dist alpha} \& \eqref{eq:dist beta}}}{\leq}& 2(r+1)K^{\frac{1}{s-1}}\left(\frac{d(\idelem,k)^s}{\Size(x_0,\dots,x_r)}\right)^{\frac{1}{s-1}}+\sum_{j=1}^r d( x_{j-1}, x_j).
\end{eqnarray*}
Hence the lemma holds with the proposed constant $C=2(r+1)K^{\frac{1}{s-1}}$.
\end{proof}

The following proposition contains the particular form of triangle inequality that allows us to deduce our results for both tangents and asymptotic cones of geodesics. For any set of points $x_0,\dots,x_m\in G$ the standard triangle inequality states that
\[ d(x_0,x_m) \leq \sum_{k=1}^{m}d(x_{k-1},x_k). \]
The following proposition states that we can replace one of the terms of the sum with the distance $d(\stepproj(x_{\ell-1}), \stepproj(x_\ell))$ in the quotient group $G/\exp(V_s)$, if we pay a correction term coming from Lemma~\ref{lemma:perturbation lemma}.

Theorem~\ref{thm:tangent} for tangents will follow from the numerator of the correction term being related to the removed distance with a power $1+\epsilon$, which implies that in the tangential limit, the correction term is irrelevant. Theorem~\ref{thm:geodesic blowdowns} on the other hand will follow from the correction term being inversely related to the size of the configuration of the other points. This will allow us to apply Lemma~\ref{Euclidean:lemma} to constrain the behavior of geodesics on the large scale.

\begin{proposition}\label{prop:lower step triangle inequality}
	For every Carnot group $G$ of rank $r$ and step $s\geq 2$, there exists a constant $K>0$ such that for any $\configuration=(y_0, \ldots, y_{r+2})\in G^{r+3}$, $\ell\in\{1,\dots,r+2\}$ and $\configuration_\ell:=(y_0, \ldots,\hat{y}_{\ell-1},\hat{y}_{\ell},\ldots y_{r+2})\in G^{r+1}$
	the following modified triangle inequality holds:
	\[ d(y_0, y_{r+2}) \leq d(\stepproj(y_{\ell-1}), \stepproj(y_{\ell})) + K\left(\frac{d(y_{\ell-1}, y_{\ell})^s}{\Size(\configuration_\ell)}\right)^{\frac{1}{s-1}}+ \sum_{j\neq \ell} d( y_{j-1}, y_{j}). \]
\end{proposition}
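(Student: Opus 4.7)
The plan is to set up a single application of Lemma~\ref{lemma:perturbation lemma} with a carefully chosen $(r+1)$-tuple related to $\configuration_\ell$. First, using Proposition~\ref{prop:quotient}, pick $h\in\exp(V_s)$ realizing $d(\stepproj(y_{\ell-1}),\stepproj(y_\ell))=d(y_{\ell-1},hy_\ell)$. Since $V_s$ is central in $\g$, the element $h$ is central in $G$, so by left-invariance $d(hy_j,hy_{j+1})=d(y_j,y_{j+1})$. Consequently, the auxiliary walk
\[ y_0\to\dots\to y_{\ell-1}\to hy_\ell\to hy_{\ell+1}\to\dots\to hy_{r+2} \]
has total length $\sum_{j\neq\ell}d(y_{j-1},y_j)+d(\stepproj(y_{\ell-1}),\stepproj(y_\ell))$; the remaining task is to bridge the gap from the endpoint $hy_{r+2}$ back to $y_{r+2}$ at a controlled cost, and this is where the error-correction lemma enters.

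Concretely, apply Lemma~\ref{lemma:perturbation lemma} to the tuple
\[ x_0:=y_0,\ \dots,\ x_{\ell-2}:=y_{\ell-2},\ x_{\ell-1}:=hy_{\ell+1},\ \dots,\ x_r:=hy_{r+2}, \]
with $k:=h^{-1}$. Centrality of $h$ gives $kx_r=y_{r+2}$, hence
\[ d(y_0,y_{r+2})\le C\!\left(\frac{d(\idelem,h)^s}{\Size(x_0,\dots,x_r)}\right)^{\!1/(s-1)}+\sum_{j=1}^{r}d(x_{j-1},x_j). \]
The key observation is that, since $h\in\exp(V_s)\subset\brkt{G}{G}$, the horizontal projections $\horproj(x_j)$ coincide with those of $\configuration_\ell$, so the consecutive horizontal differences defining the two sizes match and $\Size(x_0,\dots,x_r)=\Size(\configuration_\ell)$. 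The only nonstandard edge is $d(y_{\ell-2},hy_{\ell+1})$, which the triangle inequality splits into $d(y_{\ell-2},y_{\ell-1})+d(y_{\ell-1},hy_\ell)+d(hy_\ell,hy_{\ell+1})=d(y_{\ell-2},y_{\ell-1})+d(\stepproj(y_{\ell-1}),\stepproj(y_\ell))+d(y_\ell,y_{\ell+1})$; all other edges equal the original $d(y_{j-1},y_j)$. Summing gives $\sum_{j=1}^{r}d(x_{j-1},x_j)\le\sum_{j\neq\ell}d(y_{j-1},y_j)+d(\stepproj(y_{\ell-1}),\stepproj(y_\ell))$.

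To finish, bound $d(\idelem,h)=d(y_\ell,hy_\ell)\le d(y_\ell,y_{\ell-1})+d(y_{\ell-1},hy_\ell)\le 2d(y_{\ell-1},y_\ell)$, substitute above, and set $K:=2^{s/(s-1)}C$. The degenerate case $\Size(\configuration_\ell)=0$ is handled separately: the right-hand side is infinite unless $d(y_{\ell-1},y_\ell)=0$, in which case the inequality collapses to the ordinary triangle inequality applied to $\configuration$. The main subtlety is precisely the choice of the intermediate tuple—shifting only the ``tail'' of $\configuration_\ell$ by $h$—which simultaneously retargets $kx_r$ to $y_{r+2}$, leaves the horizontal projections (and hence the relevant size) unchanged, and keeps every consecutive edge short; this is what lets the correction term depend on $\Size(\configuration_\ell)$ rather than on something involving $y_{\ell-1}$ or $y_\ell$.
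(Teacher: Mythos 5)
Your proposal is correct and follows essentially the same route as the paper's proof: the same shifted tuple $(y_0,\dots,y_{\ell-2},hy_{\ell+1},\dots,hy_{r+2})$ with $k=h^{-1}$, the same observation that central translation preserves $\Size$, the same triangle-inequality splitting of the one nonstandard edge, and the same bound $d(\idelem,h)\le 2d(y_{\ell-1},y_\ell)$ yielding $K=2^{s/(s-1)}C$.
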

\begin{proof}
	Since the claim of the proposition is degenerate when $\Size(\configuration_\ell)=0$, we can assume that $\Size(\configuration_\ell)>0$.
	Let $C$ be the constant of Lemma~\ref{lemma:perturbation lemma} for the group $G$. We claim that the constant $K:=2^{\frac{s}{s-1}}C$ will satisfy the statement of the proposition.
	
	By Proposition~\ref{prop:quotient} there exists $h\in\exp(V_{s})$ such that 
	\begin{equation}\label{h:prop}
	d(y_{\ell-1},hy_{\ell}) = d(\stepproj(y_{\ell-1}),\stepproj(y_{\ell})).
	\end{equation}
	We consider the points $x_j:=y_j$ for $j<\ell-1$ and $x_j:=hy_{j+2}$ for $j\geq\ell-1$. Since translation by $h$ does not change the horizontal projection, 
	\[ \Size(x_0,\dots,x_r) = \Size(\configuration_\ell)>0. \]
	Applying Lemma~\ref{lemma:perturbation lemma} with $k:=h^{-1}$ and 
	the points $x_0,\ldots, x_r$, we obtain the estimate
	\begin{equation}\label{eq:perturbation}
		d(x_0, h^{-1} x_r)\leq C\left(\frac{d(\idelem,h^{-1})^s}{\Size(x_0,\dots,x_r)}\right)^{\frac{1}{s-1}} + \sum_{j=1}^{r} d(x_{j-1}, x_{j}).
	\end{equation}
	By the definition of the points $x_j$, for $j\neq \ell-1$, we have
	\[ d(x_{j-1}, x_{j}) = \begin{cases}
	d(y_{j-1},y_j),&\text{if }j<\ell-1\\
	d(hy_{j+1},hy_{j+2}),&\text{if }j>\ell-1
	\end{cases} \]
	so
	\[ \sum_{j<\ell-1} d(x_{j-1}, x_{j}) = \sum_{j<\ell-1} d(y_{j-1}, y_{j})\]
	and
	\[ \sum_{j>\ell-1} d(x_{j-1}, x_{j})=\sum_{j>\ell+1} d(y_{j-1}, y_{j}). \]
	For $j=\ell-1$ on the other hand, applying the identity \eqref{h:prop} through a triangle inequality, we have
	\begin{align*}
	d(x_{\ell-2},x_{\ell-1}) &=
	d(y_{\ell-2},hy_{\ell+1}) 
	\\&\leq d(y_{\ell-2},y_{\ell-1}) + d(y_{\ell-1},hy_{\ell}) + d(hy_{\ell},hy_{\ell+1})
	\\&=d(y_{\ell-2},y_{\ell-1}) + d(\stepproj(y_{\ell-1}),\stepproj(y_{\ell})) + d(y_{\ell},y_{\ell+1}),
	\end{align*}
	filling in the missing terms $d(y_{j-1}, y_{j})$ for $j=\ell-1$ and $j=\ell+1$. Combining the cases, we get the estimate
	\begin{equation}\label{eq:x distance sum}
		\sum_{j=1}^{r} d(x_{j-1}, x_{j})
		\leq d(\stepproj(y_{\ell}),\stepproj(y_{\ell+1})) + \sum_{j\neq \ell}d(y_{j-1}, y_{j}).
	\end{equation}
	We combine the identity \eqref{h:prop}  with the fact that the projection $\stepproj$ is 1-Lipschitz, and we get  that $d(y_{\ell-1},hy_{\ell}) \leq d(y_{\ell-1},y_{\ell})$.
	Thus since $h$ is in the center of $G$, the distance $d(\idelem,h^{-1})$ can be estimated by
	\begin{equation}\label{eq:h estimate}
		d(\idelem,h^{-1}) 
		= d(hy_{\ell-1},y_{\ell-1})
		\leq d(hy_{\ell-1},hy_{\ell}) + d(hy_{\ell},y_{\ell-1})
		\leq 2d(y_{\ell-1},y_{\ell}).
	\end{equation}
	Combining \eqref{eq:perturbation} with \eqref{eq:x distance sum} and \eqref{eq:h estimate} results in the desired inequality
	\begin{align*}
	d(y_0,y_{r+2}) \leq 2^{\frac{s}{s-1}}C\left(\frac{d(y_{\ell-1},y_{\ell})^s}{\Size(x_0,\dots,x_r)}\right)^{\frac{1}{s-1}} &+ d(\stepproj(y_{\ell-1}),\stepproj(y_{\ell})) \\&+  \sum_{j\neq \ell}d(y_{j-1}, y_{j}).\qedhere
	\end{align*}
\end{proof}

\subsection{Geometric lemmas about minimal height and size}
None of the estimates of the rest of this section will be used for Theorem~\ref{thm:tangent}, so the reader interested in just the results about tangents can skip the following two lemmas.
For the proof of Theorem~\ref{thm:geodesic blowdowns} (and its generalization Theorem~\ref{thm:quasigeod}) we need to describe how the boundedness of the previously defined notions of $\Size$ and $\Width$ relate to uniform neighborhoods of hyperplanes in the abelianization $G/\brkt{G}{G}$. 
Lemma~\ref{Euclidean:lemma} describes how $\Width$ and hyperplane neighborhoods are related and Lemma~\ref{lemma:size lower bound} gives a lower bound for $\Size$ in terms of $\Width$ of a translation of the vertices.

We will only need the implications and estimates in one direction, however all of these lemmas can be generalized to include also the opposite inequalities (with possibly worse constants) and the reverse implications. 

\begin{lemma}\label{Euclidean:lemma}	
	Let $\Gamma$ be a subset of $\R^r$. If there exists $K>0$ such that $\Width(P)\leq K$ for all $P\in \Gamma^{m}$,
	then there exists an $(m-1)$-plane $W\subset \R^r$ such that $\Gamma\subset \bar B_{\R^r}(W,K)$. 
\end{lemma}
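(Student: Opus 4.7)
My plan is to use the identity from Remark~\ref{rmk:width:volume and width}, which expresses the width of a parallelotope as the distance from a vertex to the span of a face of maximal $(m-1)$-volume. Specifically, if I can find $(a_1,\dots,a_{m-1}) \in \Gamma^{m-1}$ whose $(m-1)$-volume dominates that of every other $(m-1)$-tuple in $\Gamma$, then for any $x \in \Gamma$ the face $\PP^* := \PP(a_1,\dots,a_{m-1})$ automatically has maximal $(m-1)$-volume among the faces of $\PP(a_1,\dots,a_{m-1},x)$, since every other face is of the form $\PP(a_1,\dots,\hat a_j,\dots,a_{m-1},x)$ and is generated by an $(m-1)$-tuple of points in $\Gamma$. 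The remark then identifies
\[ \Width(a_1,\dots,a_{m-1},x) = d(x,\Span\PP^*) = d(x,W), \]
where $W := \Span\{a_1,\dots,a_{m-1}\}$, and the hypothesis $\Width\leq K$ immediately yields $d(x,W)\leq K$.

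For compact $\Gamma$, such a tuple $(a_1,\dots,a_{m-1})$ exists by continuity of $\vol_{m-1}\PP$ on the compact set $\Gamma^{m-1}$. If the maximum $(m-1)$-volume happens to be zero, every $(m-1)$-tuple of $\Gamma$ is linearly dependent, so $\dim\Span\Gamma\leq m-2$ and any $(m-1)$-plane containing $\Span\Gamma$ works. Otherwise, $W$ is $(m-1)$-dimensional and the argument of the previous paragraph gives $\Gamma\subset\bar B_{\R^r}(W,K)$ directly.

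The main obstacle is the noncompact case, where the supremum of $\vol_{m-1}\PP$ need not be attained, so I would resolve it via a compact exhaustion combined with compactness of the Grassmannian. First I replace $\Gamma$ by its closure $\bar\Gamma$; this preserves both the hypothesis (since $\Width$ is continuous) and the conclusion (since $\bar B_{\R^r}(W,K)$ is closed). Then for each $R>0$ the truncation $\Gamma_R := \bar\Gamma\cap\bar B_{\R^r}(0,R)$ is compact, so the compact argument produces an $(m-1)$-plane $W_R\in\mathrm{Gr}(m-1,r)$ with $\Gamma_R\subset\bar B_{\R^r}(W_R,K)$. Since $\mathrm{Gr}(m-1,r)$ is compact, a subsequence $W_{R_k}$ converges to some $W^*$, and continuity of the map $W\mapsto d(x,W)$ on the Grassmannian for each fixed $x$ gives $d(x,W^*) = \lim_k d(x,W_{R_k}) \leq K$ for every $x\in\bar\Gamma$. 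Taking $W:=W^*$ completes the argument.
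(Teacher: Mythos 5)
Your proof is correct and follows essentially the same strategy as the paper's: choose an $(m-1)$-tuple of maximal $(m-1)$-volume so that Remark~\ref{rmk:width:volume and width} identifies $\Width(a_1,\dots,a_{m-1},x)$ with $d(x,\Span\{a_1,\dots,a_{m-1}\})$, and then pass from the finite/compact case to the general case by a compactness argument on the approximating planes. The only differences are in the limiting step --- the paper exhausts $\Gamma$ by finite subsets of a countable dense set and takes a local Hausdorff limit of the neighborhoods $\bar B(W_n,K)$, while you exhaust the closure by compact truncations and invoke compactness of the Grassmannian together with continuity of $W\mapsto d(x,W)$ --- and in your explicit treatment of the degenerate case where all $(m-1)$-volumes vanish, which the paper leaves implicit.
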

\begin{proof}
	Consider first the case when $\Gamma$ is a finite set. We take $P^*\in \Gamma^{m-1}$ so that the parallelotope $\PP(P^*)$ generated by $P^*$ maximizes the value $\vol_{m-1} \PP(P')$ among all $P' \in \Gamma^{m-1}$.
	We claim that $$\Gamma\subset \bar B_{\R^r}(\Span(P^*),K).$$ 
	Indeed, for every $a\in \Gamma$, since $P^*$ has maximal volume, we have by Remark~{\ref{rmk:width:volume and width}} that
	\[ d(a, \Span (P^*) ) 
	=\frac{\vol_{m} \PP( P^*,a ) }{ \vol_{m-1} \PP(P^*) } 
	\overset{{\ref{rmk:width:volume and width}}}{=}
	\Width(P^*,a)
	\leq K. \]
	Consider then the case of an infinite set $\Gamma$, and let $(p_n)_{n\in\N}$  be a countable dense set in $\Gamma$. Applying the lemma for the finite sets $\{p_1,\dots,p_n\}$, we have the existence of $(m-1)$-planes $W_n$ such that $\{p_1,\dots, p_n\}\subset \bar B_{\R^r}(W_n,K)$.
	By compactness there exist an $(m-1)$-plane $W$ and a diverging sequence $n_j$ such that $W_{n_j}\to W$, as $j\to\infty$.
	
	We want to prove that $\Gamma\subset \bar B_{\R^r}(W,K)$. 
	It is enough to show that $\{p_1,\dots, p_n\}\subset \bar B_{\R^r}(W,K+\eps)$, for all $n\in \N$ and $\eps>0$. 
	Fix such $n$ and $\eps$ and fix $R_n$ so that $\{p_1,\dots, p_n\}\subset \bar B_{\R^r}(0, R_n)$.
	Then we take $j$ large enough that $n_j>n$ and 
	\[ \bar B_{\R^r}(W_{n_j},K) \cap \bar B_{\R^r}(0, R_n) \subset \bar B_{\R^r}(W,K+\eps), \]
	which is possible since $W_{n_j} \to W$, and so $\bar B_{\R^r}(W_{n_j},K) \to \bar B_{\R^r}(W,K)$ on compact sets in the Hausdorff sense.
	Thus we conclude the proof of the claim:	
	\begin{align*}
	\{p_1,\dots, p_n\}
	\subset
	\{p_1,\dots, p_{n_j}\} \cap \bar B_{\R^r}(0, R_n)
	&\subset
	\bar B_{\R^r}(W_{n_j},K) \cap \bar B_{\R^r}(0, R_n)
	\\&\subset
	\bar B_{\R^r}(W,K+\eps). \qedhere
	\end{align*}
\end{proof}
For convenience of applying Lemma~\ref{Euclidean:lemma} within the proof of Theorem~\ref{thm:geodesic blowdowns}, we give a lower bound for $\Size$ in terms of $\Width$. We will not need this bound for the proof of Theorem~\ref{thm:tangent}.
\begin{lemma}\label{lemma:size lower bound}
	In any Carnot group $G$, there exists a constant $c>0$ such that the following holds:
	
	For any $\configuration=(g_0,\dots,g_r)\in G^{r+1}$ and $\ell\in\{0,\dots,r\}$, let $\Gamma_\ell\in (G/\brkt{G}{G})^{r}$ be the tuple of the points $\horproj(g_j)-\horproj(g_\ell)$, $j\neq \ell$. Then
	\[ \Size(\configuration) \geq c\cdot\Width(\Gamma_\ell). \]
\end{lemma}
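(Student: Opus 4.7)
The plan is to reduce the statement to a purely linear-algebraic estimate in the abelianization $V_1$. Since both $\Size(\configuration)$ and $\Width(\Gamma_\ell)$ depend only on the horizontal projections $v_j := \horproj(g_j)$, the inequality is equivalent to a comparison between two $r$-tuples of vectors in the normed space $V_1$: the consecutive differences $w_k := v_k - v_{k-1}$ entering the definition of $\Size$, and the shifted vectors $u_j := v_j - v_\ell$ (for $j \neq \ell$) making up $\Gamma_\ell$, which after relabeling form another $r$-tuple. A telescoping computation shows that each $u_j$ is a $\pm 1$-signed sum of consecutive $w_k$'s, and conversely $w_k = u_k - u_{k-1}$ with the convention $u_\ell := 0$. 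Hence the two tuples are related by an $r \times r$ change-of-basis matrix $C$ whose entries (and those of $C^{-1}$) all lie in $\{-1,0,1\}$, with at most $r$ nonzero entries per row.

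I may assume $\Width(\Gamma_\ell) > 0$, since otherwise the inequality is trivial; the $u_j$'s are then linearly independent, and so are the $w_k$'s. I would fix an auxiliary Euclidean inner product on $V_1$ and work with the corresponding norm; by equivalence of finite-dimensional norms, this changes $\Width$ only by multiplicative constants depending on $r$ and the original norm on $V_1$. In the Euclidean setting there is a clean dual identity for $\Width$: if $A$ is the $r \times r$ matrix with columns $w_1,\ldots,w_r$, then $\Span\{w_i : i \neq k\}$ is the kernel of the linear functional $v \mapsto e_k^T A^{-1} v$, which evaluates to $1$ on $w_k$, whence
\[ d\bigl(w_k,\Span\{w_i : i\neq k\}\bigr) = \frac{1}{\|e_k^T A^{-1}\|_2} \quad\text{and}\quad \Width(w_1,\ldots,w_r) = \frac{1}{\max_k \|e_k^T A^{-1}\|_2}. \]
The analogous identity holds for the matrix $U$ whose columns are the $u_j$'s.

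The matrix identity $U = A C^T$ yields $A^{-1} = C^T U^{-1}$, so each row of $A^{-1}$ is a $\{-1,0,1\}$-combination of at most $r$ rows of $U^{-1}$. Taking Euclidean norms and maximizing in $k$ gives $\max_k \|e_k^T A^{-1}\|_2 \leq r \max_j \|e_j^T U^{-1}\|_2$, which rearranges to $\Width(w_1,\ldots,w_r) \geq \tfrac{1}{r}\Width(u_1,\ldots,u_r)$ in the Euclidean norm. Restoring the norm-equivalence factors then gives the claim with a constant $c>0$ depending only on the rank $r$ and the chosen norm on $V_1$. I do not anticipate any serious obstacle: the whole argument is finite-dimensional linear algebra, and the only delicate ingredients — the explicit structure of $C$ and $C^{-1}$, and the dual identity for $\Width$ — are both elementary.
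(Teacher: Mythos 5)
Your argument is correct, and its combinatorial core is the same as the paper's: both proofs rest on the telescoping observation that the tuple $\Gamma_\ell$ and the tuple of consecutive differences defining $\Size(\configuration)$ are related by a fixed triangular change-of-basis matrix with entries in $\{-1,0,1\}$ (the paper's block map $A^\ell$ is exactly your $C$). Where you genuinely diverge is in how the distortion of $\Width$ under this change of basis is controlled. The paper identifies $G/\brkt{G}{G}$ with $\R^n$, notes that $A^\ell$ is a $C_\ell$-biLipschitz linear bijection, and uses the volume-quotient formula for $\Width$ (Remark~\ref{rmk:width:volume and width}) to obtain $\Width(A^\ell(P))\leq C_\ell^{2r-1}\Width(P)$, with a constant that is left implicit. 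You instead use the dual identity $\Width(w_1,\dots,w_r)=1/\max_k\norm{e_k^{T}A^{-1}}_2$ --- which is valid here because $\dim V_1=r$, so for linearly independent columns the span of $\{w_i: i\neq k\}$ coincides with the kernel of $v\mapsto e_k^{T}A^{-1}v$, a functional taking the value $1$ at $w_k$ --- together with the matrix identity $A^{-1}=C^{T}U^{-1}$ and a triangle inequality over the at most $r$ entries of each column of $C$. This buys an explicit constant $1/r$ (before the norm-equivalence adjustment) and avoids volumes entirely; the only extra care needed is to dispose of the degenerate case $\Width(\Gamma_\ell)=0$ so that $A$ and $U$ are invertible, which you do. Both routes are sound, and the relabelling of the $u_j$ and the signs for $j<\ell$ are harmless since $\Width$ is invariant under permuting and negating its arguments.
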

\begin{proof}
	In $\R^n$, consider for each $\ell\in\{0,\dots,r\}$ the map $A^\ell:(\R^n)^r\to (\R^n)^r$, whose component functions $A^\ell_k:(\R^n)^r\to \R^n$ are defined by
	\[ A^\ell_k(x_1,\dots,x_r) = \sum_{j=k}^{\ell}x_j\quad\text{for }k=1,\dots,\ell \]
	and by
	\[ A^\ell_k(x_1,\dots,x_r) = \sum_{j=\ell+1}^{k}x_j\quad\text{for }k=\ell+1,\dots,r. \]
	In block-matrix form, the linear map $A^\ell$ has the form $A^\ell = \begin{bmatrix}U&0\\0&L\end{bmatrix}$, where  
	\[ U = \begin{bmatrix}
	I&I&\dots&I\\
	0&I&\dots&I\\
	\vdots&\vdots&\ddots&\vdots\\
	0&0&\dots&I
	\end{bmatrix}
	\quad\text{and}\quad L = \begin{bmatrix}
	I&0&\dots&0\\
	I&I&\dots&0\\
	\vdots&\vdots&\ddots&\vdots\\
	I&I&\dots&I
	\end{bmatrix} \]
	are themselves $\ell\times\ell$ and $(r-\ell)\times(r-\ell)$ upper and lower triangular block-matrices, whose $n\times n$-blocks are all either the $n\times n$ identity matrix $I$ or zero.
	
	From the above description, it is clear that $A^\ell$ is a linear bijection, so there exists a constant $C_\ell>0$ such that $A^\ell$ is a $C_\ell$-biLipschitz map. Thus for any set $\PP\subset \R^r$, we have 
	\[ C_\ell^{-m}\vol_m(\PP)\leq \vol_m(A^\ell(\PP)) \leq C_\ell^m\vol_m(\PP). \]
	By the characterization of $\Width$ as volume quotients in Remark~\ref{rmk:width:volume and width}, it follows that
	\begin{equation}\label{eq:bilip width distortion}
	\Width(A^\ell(x_1,\dots,x_r))
	\leq C_\ell^{2r-1}\cdot\Width(x_1,\dots,x_r)
	\end{equation}
	The abelianization $G/\brkt{G}{G}$ is a normed space, so there exists for some $C>0$ and $n\in\N$ a $C$-biLipschitz isomorphism $\phi:G/\brkt{G}{G}\to\R^n$.
	We claim that the constant 
	\begin{equation}\label{const:c}
	c := \min_{\ell\in\{0,\dots,r\}} C^{-2}C_\ell^{1-2r}
	\end{equation}
	satisfies the claim of the lemma.
	
	Let $y_j:=\horproj(g_{j})-\horproj(g_{j-1})$, $j=1,\dots,r$ so that the definition \eqref{eqdef:size} of $\Size$ is written as
	\begin{equation}\label{eq:size of config}
		\Size(\configuration) = \Size(g_0,\dots,g_r) = \Width(y_1,\dots,y_r).
	\end{equation}
	Apply the map $(\phi^{-1})^r\circ A^\ell\circ (\phi)^r:(G/\brkt{G}{G})^r\to (G/\brkt{G}{G})^r$ to the tuple $(y_1,\dots,y_r)\in (G/\brkt{G}{G})^r$. For $k\leq \ell$, we have
	\begin{align*}
	(\phi^{-1})^r\circ A^\ell_k(\phi(y_1),\dots,\phi(y_r)) 
	&= (\phi^{-1})^r\left(\sum_{j=k}^{\ell}\left(\phi\circ\horproj(g_{j})-\phi\circ\horproj(g_{j-1})\right) \right)
	\\&= (\phi^{-1})^r\left(\phi\circ\horproj(g_\ell)-\phi\circ\horproj(g_{k-1}) \right)
	\\&= \horproj(g_\ell) - \horproj(g_{k-1}).
	\end{align*}
	Similarly for $k\geq \ell+1$, we have
	\[ (\phi^{-1})^r\circ A^\ell_k(\phi(y_1),\dots,\phi(y_r)) = \horproj(g_k)-\horproj(g_{\ell}). \]
	That is, up to the sign of the elements $k\leq \ell$ components, the components of $(\phi^{-1})^r\circ A^\ell(\phi(y_1),\dots,\phi(y_r))$ form exactly the tuple $\Gamma_\ell$.
	
	For any $C$-Lipschitz map $f$, we have
	\[ \Width(f(y_1),\dots,f(y_r)) \leq C\cdot\Width(y_1,\dots,y_r). \]
	Since both $\phi$ and $\phi^{-1}$ are $C$-Lipschitz, by \eqref{eq:bilip width distortion} we get
	\begin{eqnarray*}
	\Width(\Gamma_\ell) 
	&=& \Width((\phi^{-1})^r\circ A^\ell(\phi(y_1),\dots,\phi(y_r)))
	\\&\leq &C\cdot\Width(A^\ell(\phi(y_1),\dots,\phi(y_r)))
	\\&\overset{\eqref{eq:bilip width distortion}}{\leq}& CC^{2r-1}_{\ell}\cdot\Width(\phi(y_1),\dots,\phi(y_r))
	\\&\leq& C^2C^{2r-1}_{\ell}\cdot\Width(y_1,\dots,y_r).
	\end{eqnarray*}
	By \eqref{eq:size of config} and \eqref{const:c} we end up with the desired estimate
	\begin{align*}
	\Size(\configuration)\overset{\eqref{eq:size of config}}{=}\Width(y_1,\dots,y_r) &\geq \frac{1}{C^2C_\ell^{2r-1}}\Width(\Gamma_\ell)
	\\&\overset{\eqref{const:c}}{\geq} c\cdot\Width(\Gamma_\ell).\qedhere
	\end{align*}
\end{proof}

\section{Blowups of geodesics}\label{sec:UP}
We next prove the results on blowups (i.e.\ tangents) of geodesics (Theorems \ref{thm:sr tangent iteration} and \ref{thm:tangent}).
In fact, instead of the qualitative claim of Theorem~\ref{thm:tangent}, we will prove a slightly stronger quantified statement. We show that $\stepproj\circ\gamma$ satisfies a sublinear distance estimate on some small enough interval, implying that any tangent of $\stepproj\circ\gamma$ is a geodesic. The estimate shall follow by applying the triangle inequality of Proposition~\ref{prop:lower step triangle inequality} with tuples $E=(y_0,\dots,y_{r+2})$ where only two of the points $y_{\ell-1}$ and $y_\ell$ will vary.

The quantified claim of Theorem~\ref{thm:quantified tangent} allows us to prove Theorem~\ref{thm:tangent} and Corollary~\ref{cor:tangent iteration} also for the larger family of \emph{weak tangents} (see Theorem~\ref{thm:weak tangent} and Corollary~\ref{cor:weak tangent iteration}), where the fixed basepoint $\bar{t}$ of the blowup is replaced with a converging sequence $t_j\to\bar{t}$.
We denote by $\gamma_{h,\bar{t}}: I_{h,t} \to G$ the curve defined on $I_{h,\bar{t}} := \tfrac{1}{h} (I - \bar{t})$ by
\begin{equation}\label{eq:weak tangent}
\gamma_{h,\bar{t}}(t) := \delta_{\frac{1}{h}} \left( \gamma(\bar t)^{-1} \gamma( \bar t + ht )\right).
\end{equation}
The collection of all weak tangents will be denoted by
\[ \WeakTang(\gamma,\bar t) := \left\{ \sigma\; \vert\; \exists h_j\to 0, t_j\to\bar{t} :\gamma_{h_{j},t_j}\to \sigma \right\} \]
and the iterated weak tangent cones are denoted by
\[ \WeakTang^{k+1}(\gamma,\bar t) := \bigcup_{\sigma\in\Tang^k(\gamma,\bar t)}\bigcup_{t\in\R}\WeakTang(\sigma,t). \]

\begin{theorem}\label{thm:quantified tangent}
	Let $G$ be a Carnot group of step $s$ and let $\gamma:I\to G$ be a geodesic. Then for any $\bar{t}\in I$, there exist constants $C>0$ and $\delta>0$ such that for all $a,b\in (\bar{t}-\delta,\bar{t}+\delta)$,
	\[ \abs{a-b}-C\abs{a-b}^{\frac{s}{s-1}}\leq d(\stepproj\circ\gamma(a),\stepproj\circ\gamma(b))\leq \abs{a-b}. \]
\end{theorem}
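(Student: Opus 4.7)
The plan is to handle the upper bound trivially from 1-Lipschitzness, and then derive the lower bound by a single application of the modified triangle inequality in Proposition~\ref{prop:lower step triangle inequality}, with a configuration in which all but two entries are fixed reference points on $\gamma$ and only $y_{\ell-1}=\gamma(a)$ and $y_\ell=\gamma(b)$ depend on $a,b$. The upper bound $d(\stepproj\circ\gamma(a),\stepproj\circ\gamma(b))\leq\abs{a-b}$ is immediate, since $\stepproj$ is a submetry by Proposition~\ref{prop:quotient} and $d(\gamma(a),\gamma(b))=\abs{a-b}$ along the geodesic $\gamma$.

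For the lower bound, I would first fix times $t_0<t_1<\cdots<t_r$ in $I$ with $\bar t\in(t_i,t_{i+1})$ for some index $i$, chosen so that the reference configuration $\configuration^\star:=(\gamma(t_0),\ldots,\gamma(t_r))$ has $\Size(\configuration^\star)\geq c$ for some constant $c>0$. Setting $\delta:=\min\{\bar t-t_i,\,t_{i+1}-\bar t\}$, for any $a,b\in(\bar t-\delta,\bar t+\delta)$ with $a\leq b$ (the opposite case is symmetric and the equality case trivial), I would insert $\gamma(a),\gamma(b)$ between $\gamma(t_i)$ and $\gamma(t_{i+1})$ to obtain
\[ \configuration=(\gamma(t_0),\ldots,\gamma(t_i),\gamma(a),\gamma(b),\gamma(t_{i+1}),\ldots,\gamma(t_r))\in G^{r+3}, \]
and apply Proposition~\ref{prop:lower step triangle inequality} with $\ell:=i+2$, so that $y_{\ell-1}=\gamma(a)$, $y_\ell=\gamma(b)$, and crucially $\configuration_\ell=\configuration^\star$, which does not depend on $a$ or $b$.

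Because all time parameters appear in monotone order along $\gamma$, every consecutive distance in the configuration equals the corresponding time difference, so $d(y_0,y_{r+2})=t_r-t_0$ and $\sum_{j\neq\ell}d(y_{j-1},y_j)=(t_r-t_0)-(b-a)$. The two occurrences of $t_r-t_0$ cancel when these identities are substituted into Proposition~\ref{prop:lower step triangle inequality}, and the size bound $\Size(\configuration_\ell)\geq c$ then yields
\[ \abs{a-b}\leq d(\stepproj\circ\gamma(a),\stepproj\circ\gamma(b))+K c^{-\frac{1}{s-1}}\abs{a-b}^{\frac{s}{s-1}}, \]
which is the claim with $C:=Kc^{-1/(s-1)}$.

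The hard part will be justifying the first step, namely the existence of $r+1$ reference times at which the horizontal projections $\horproj(\gamma(t_j))$ are in sufficiently general position to guarantee $\Size(\configuration^\star)>0$. When $\horproj\circ\gamma(I)$ is not contained in any proper affine subspace of $V_1$, suitable times can be selected by a greedy procedure, picking each new time so that its projection lies outside the affine span of the previously chosen ones. In the degenerate case where $\horproj\circ\gamma(I)$ is confined to a proper affine subspace of $V_1$, the curve $\gamma$ is contained in a coset of a proper Carnot subgroup of $G$, and I would handle this by induction on the rank, relating $\stepproj$ on $G$ to the analogous top-layer projection on the subgroup.
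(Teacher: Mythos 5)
Your argument is essentially identical to the paper's proof: the same single application of Proposition~\ref{prop:lower step triangle inequality} to a configuration made of $r+1$ fixed reference points on $\gamma$ with positive $\Size$ plus $\gamma(a),\gamma(b)$ inserted as consecutive entries, followed by the same telescoping of consecutive distances along the geodesic, yielding $C=K\,\Size^{-1/(s-1)}$. The ``hard part'' you flag is handled in the paper by exactly the reduction you propose (after translating, assume $G$ is the smallest Carnot subgroup containing $\gamma(I)$, so the horizontal projections affinely span $V_1$ and the reference times exist); the paper also simply orders all $r+3$ points by time and takes $\delta=\min_j\abs{t_j}$, which makes $\gamma(a),\gamma(b)$ consecutive automatically and spares you from having to arrange $\bar t\in(t_i,t_{i+1})$.
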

\begin{proof}
	The upper bound follows directly from the projection $\stepproj:G\to G/\exp(V_s)$ being 1-Lipschitz. The non-trivial statement is the lower bound, which will follow from Proposition~\ref{prop:lower step triangle inequality}.
	
	Translating the parametrization if necessary, we may assume that $\bar{t}=0$. Since any geodesic is still a geodesic within every Carnot subgroup containing it, we may also assume that $G$ is the smallest Carnot subgroup containing $\gamma(I)$. Hence, if $r$ is the rank of $G$, there exist $t_0,\dots,t_r\neq 0$ such that the points $\horproj\circ\gamma(t_0),\dots,\horproj\circ\gamma(t_r)$ are in general position. 
	By Remark~{\ref{rmk:width:general position and size}}, we have that
	\begin{equation}\label{cond:delta}
	\Delta:=\Size( \gamma(t_0),\dots, \gamma(t_r))  > 0.
	\end{equation}
	Let $K$ be the constant given by Proposition~\ref{prop:lower step triangle inequality} for the Carnot group $G$. We claim that the constants $C:=K\Delta^{-\frac{1}{s-1}}$ and $\delta:=\min(\abs{t_0},\dots,\abs{t_r})$ will satisfy the claim of the theorem.
	
	Fix $a,b\in (-\delta,\delta)$. 
	Consider the set of points 
	\[ \configuration:=\{y_0,\dots,y_{r+2}\} = \{\gamma(t_j):\; j=0,\dots,r \}\cup \{\gamma(a),\gamma(b)\}, \]
	where the points $y_j$ are indexed by the order in which they appear along $\gamma$. By the choice of $\delta$, the points $\gamma(a)$ and $\gamma(b)$ are consecutive in this ordering, so there is some $\ell\in\{1,\dots,r+2\}$ such that $y_{\ell-1} = \gamma(a)$ and $y_\ell = \gamma(b)$.
	
	We apply Proposition~\ref{prop:lower step triangle inequality} with the above $\configuration$ and $\ell$. By \eqref{cond:delta}, we get the estimate
	\begin{equation}\label{eq:tangent cut and correct}
	d(y_0, y_{r+2}) \leq d(\stepproj\circ\gamma(a), \stepproj\circ \gamma(b))
	+ K\left(\frac{d(\gamma(a), \gamma(b))^s}{\Delta}\right)^{\frac{1}{s-1}}
	+ \sum_{j\neq \ell} d( y_{j-1}, y_{j}).
	\end{equation}
	By the choice of the points $y_j$ as sequential points along the geodesic $\gamma$, we have
	\begin{equation}\label{eq:sequential points distance}
		\sum_{j\neq \ell} d( y_{j-1}, y_{j}) = d(y_0,y_{r+2}) - d( y_{\ell-1}, y_{\ell}) = d(y_0,y_{r+2}) - d(\gamma(a), \gamma(b)).
	\end{equation}
	We then apply the identity \eqref{eq:sequential points distance} to \eqref{eq:tangent cut and correct}, we use the fact that $\gamma|_{[a,b]}$ is a geodesic, and we reorganize the terms. This gives the lower bound
	\[ d(\stepproj\circ\gamma(a), \stepproj\circ \gamma(b)) 
	\geq \abs{a-b}
	-K\Delta^{-\frac{1}{s-1}}\abs{a-b}^{\frac{s}{s-1}}, \]
	proving the claim of the theorem.
\end{proof}
 
Theorem~\ref{thm:tangent} shall follow immediately from Theorem~\ref{thm:quantified tangent} by taking any limit of dilations $h_k\to 0$. We prove the result for the larger family of all weak tangents.

\begin{theorem}\label{thm:weak tangent}
	If $\gamma:I\to G$ is a geodesic and $t\in I$,
	then for every $\sigma\in\WeakTang(\gamma,t)$, the curve $\stepproj\circ\sigma:\R\to G/\exp(V_s)$ is a geodesic.
\end{theorem}
\begin{proof}
	Reparametrizing and left-translating if necessary, we may assume that $t=0$ and $\gamma(0)=\idelem$. 
	Then $\sigma\in\WeakTang(\gamma,0)$ is given by some sequences $h_k\to 0$ and $t_k\to 0$ as $\sigma=\lim\limits_{k\to\infty}\gamma_{h_k,t_k}$.
	
	For any $h>0$, $t\in I$, and $a,b\in I_{h,t}$, expanding the definition \eqref{eq:weak tangent} of the dilated curve $\gamma_{h,t}$, we obtain the identity
	\begin{equation}\label{eq:dilated curve distance}
		d(\gamma_{h,t}(a),\gamma_{h,t}(b)) = \frac{1}{h}d(\gamma(ha),\gamma(hb)).
	\end{equation}
	Let $C>0$ and $\delta>0$ be the constants of Theorem~\ref{thm:quantified tangent} for the geodesic $\gamma$. Rephrasing the statement of Theorem~\ref{thm:quantified tangent} for $\gamma_{h,t}$ using \eqref{eq:dilated curve distance}, we get for all $a,b\in ((-\delta-t)/h,(\delta-t)/h)$ that
	\begin{equation}\label{eq:dilated sublinear distance estimate}
		\abs{a-b} - Ch^{\frac{1}{s-1}}\abs{a-b}^{\frac{s}{s-1}} \leq d(\stepproj\circ\gamma_{h,t}(a),\stepproj\circ\gamma_{h,t}(b))\leq\abs{a-b}.
	\end{equation}
	For any $a,b\in\R$, the condition $a,b\in ((-\delta-t_k)/h_k,(\delta-t_k)/h_k)$ is satisfied for any large enough indices $k\in\N$. Thus taking the limit of \eqref{eq:dilated sublinear distance estimate} as $h=h_k\to 0$, we get for the limit curve $\stepproj\circ\sigma=\lim\limits_{k\to\infty}\stepproj\circ\gamma_{h_k,t_k}$ the estimate
	\[ \abs{a-b}\leq d(\stepproj\circ\sigma(a),\stepproj\circ\sigma(b)) \leq \abs{a-b}, \]
	showing that $\stepproj\circ\sigma$ is a geodesic.
\end{proof}

By induction from Theorem~\ref{thm:weak tangent} we obtain the following corollary, which is the stronger form of Corollary~\ref{cor:tangent iteration}.
\begin{corollary}\label{cor:weak tangent iteration}
	If $\gamma:I\to G$ is a geodesic and $t\in I$,
	then for every $\sigma\in\WeakTang^{s-1}(\gamma,t)$, the horizontal projection $\horproj\circ\sigma$ is a geodesic.
	In particular, if $G$ is \subriemannian then every $\sigma\in\WeakTang^{s-1}(\gamma,t)$ is a line.
\end{corollary}

We prove next that Theorem~\ref{thm:sr tangent iteration} follows from the fact that the metric tangent of a \subriemannian manifold is a quotient of a \subriemannian Carnot group, which is a well known theorem attributed to Bella\"iche \cite{bellaiche}.

\begin{proof}[Proof of Theorem~\ref{thm:sr tangent iteration}]
	Let $M$ be a \subriemannian manifold. Let $s$ be the step of the \subriemannian manifold, i.e., Lie brackets of length $s$ of the horizontal vector fields in $M$ span the tangent spaces $T_pM$ at each point $p\in M$. Let $\gamma:I\to M$ be a geodesic.
	
	Following \cite[Theorem 3.6]{Monti_Pigati_Vittone:tangent_cones}, we see that any metric tangent $\sigma$ of $\gamma$ is a geodesic in the nilpotent approximation $\tilde{M}$ of $M$. By \cite[Theorem 2.7]{jeancontrol}, the nilpotent approximation of a \subriemannian manifold is a homogeneous space $G/H$, where $G$ is a Carnot group of step $s$ and $H<G$ is a closed dilation invariant Lie subgroup. In particular, any iterated tangent of $\sigma$ gives another geodesic in $\tilde{M}=G/H$.
	
	On the other hand, since the projection $\pi:G\to G/H$ is a submetry, the geodesic $\sigma$ can be lifted to a geodesic $\tilde{\sigma}$ in $G$.
	Applying Corollary~\ref{cor:tangent iteration} we see that any $s-1$ times iterated tangent of $\tilde{\sigma}$ is a line. Projecting back to $G/H$, we see that also necessarily any $s-1$ times iterated of $\sigma$ must be a line. Since $\sigma$ was an arbitrary tangent of $\gamma$, it follows that any $s$ times iterated tangent of $\gamma$ is a line.
\end{proof}

\section{Blowdowns of rough geodesics}\label{sec:DOWN}
In this section we prove Theorem~\ref{thm:geodesic blowdowns} and Corollary~\ref{cor:subriemannian blowdowns}. Due to our formulation of the core of the argument (Proposition~\ref{prop:lower step triangle inequality}) as a triangle inequality, we are able to prove the stronger claims of Theorem~\ref{thm:quasigeod} and Corollary~\ref{cor:subriemannian quasiblowdowns} for rough geodesics.

To make the terminology precise, by \emph{rough geodesic}, we mean a not-necessarily-continuous curve that is a $(1,C)$-quasi-geodesic for some $C\geq 0$.
By a \emph{$(1,C)$-quasi-geodesic} we mean a \emph{$(1,C)$-quasi-isometric embedding}, i.e., some $\gamma: I\to G$ such that 
\begin{equation}\label{eq:euclidean quasigeod:1,C}
	\abs{t_{1}-t_2} - C 
	\leq d(\gamma(t_1), \gamma(t_2))
	\leq \abs{t_{1}-t_2}+C, \qquad \forall t_1, t_2\in I.
\end{equation}
Thus a $(1,0)$-quasi-geodesic is exactly a geodesic.

\begin{theorem}\label{thm:quasigeod} 
If $\gamma:\mathbb{R}\to G$ is a $(1,C)$-quasi-geodesic, then one of the following holds:
\begin{enumerate}[label=(\arabic{section}.\arabic{equation}.\roman{*})]
	\item\label{enum:quasigeod:hyperplane} There exist a hyperplane $W\subset V_1$ and some $R>0$ such that $\image(\horproj\circ\gamma)\subset B_{V_1}(W,R)$.
	\item\label{enum:quasigeod:horizontal quasigeod} There exists $C'\geq 0$ such that $\horproj\circ\gamma:\mathbb{R}\to G/\brkt{G}{G}$ is a $(1,C')$-quasi-geodesic.
\end{enumerate}
Moreover, one can take $C' = (r+2)^{s-1}C$, where $r$ is the rank and $s$ is the step of the Carnot group $G$.
\end{theorem}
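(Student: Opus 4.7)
\emph{Strategy.} The plan is to prove Theorem~\ref{thm:quasigeod} by induction on the step $s$ of $G$, keeping the rank $r$ fixed. The base case $s=1$ is immediate: $G$ is abelian, $\horproj$ is the identity, and the hypothesis says $\gamma$ already satisfies alternative~\ref{enum:quasigeod:horizontal quasigeod} with $C' = C = (r+2)^{0}C$. For the inductive step with $s\geq 2$, the key idea is to drop one layer by considering $\bar\gamma := \stepproj\circ\gamma : \R \to G/\exp(V_s)$, which lives in a step-$(s-1)$ Carnot group of the same rank $r$. Since $\stepproj$ is $1$-Lipschitz by Proposition~\ref{prop:quotient}, $\bar\gamma$ automatically inherits the upper bound of \eqref{eq:euclidean quasigeod:1,C} with constant $C$.

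\emph{The easy branch.} If $\bar\gamma$ also satisfies the corresponding lower bound with constant $(r+2)C$, then it is a $(1,(r+2)C)$-quasi-geodesic in $G/\exp(V_s)$ and the inductive hypothesis applies to it. Since the abelianization of $G/\exp(V_s)$ canonically agrees with that of $G$, the horizontal projection of $\bar\gamma$ coincides with $\horproj\circ\gamma$, so the two alternatives for $\bar\gamma$ transfer directly: either \ref{enum:quasigeod:hyperplane} holds for $\gamma$, or $\horproj\circ\gamma$ is a $(1,(r+2)^{s-2}(r+2)C) = (1,(r+2)^{s-1}C)$-quasi-geodesic, which is exactly \ref{enum:quasigeod:horizontal quasigeod} with the advertised constant.

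\emph{The hard branch via the modified triangle inequality.} Otherwise, there exist $a < b$ with
\[ d(\bar\gamma(a),\bar\gamma(b)) < (b-a) - (r+2)C. \]
Given any $r+1$ auxiliary points $g_0=\gamma(\tau_0),\dots,g_r=\gamma(\tau_r)$ on $\gamma$ whose parameters avoid $(a,b)$, arranged as $\tau_0 < \dots < \tau_k < a < b < \tau_{k+1} < \dots < \tau_r$, I would feed the $(r+3)$-tuple $\configuration = (g_0,\dots,g_k,\gamma(a),\gamma(b),g_{k+1},\dots,g_r)$ with $\ell = k+2$ into Proposition~\ref{prop:lower step triangle inequality}, so that $y_{\ell-1} = \gamma(a)$, $y_\ell = \gamma(b)$ and $\configuration_\ell = (g_0,\dots,g_r)$. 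Estimating $d(g_0,g_r) \geq (\tau_r-\tau_0) - C$ from below and each of the remaining $r+1$ edge distances by its parameter gap plus $C$, the terms $(\tau_r - \tau_0)$ cancel and the total $(r+2)C$ of quasi-isometric error combines with the displayed strict inequality to bound the correction term from below by the fixed positive quantity $\epsilon := (b-a) - (r+2)C - d(\bar\gamma(a),\bar\gamma(b))$. Solving for the size yields a uniform upper bound $\Size(g_0,\dots,g_r) \leq M := K^{s-1}((b-a)+C)^s/\epsilon^{s-1}$ valid for every admissible $(r+1)$-tuple of auxiliary points.

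\emph{From a bounded size to a hyperplane.} Fixing one auxiliary point $g_0$ and applying Lemma~\ref{lemma:size lower bound} with $\ell = 0$, the size bound converts into a uniform upper bound on $\Width$ of every $r$-tuple drawn from the translated set $\Gamma := \{\horproj(g) - \horproj(g_0) : g \in \image(\gamma|_{\R\setminus(a,b)})\} \subset V_1$. Lemma~\ref{Euclidean:lemma} then produces a linear hyperplane $W \subset V_1$ with $\Gamma \subset \bar B_{V_1}(W, M/c)$, so $\horproj\circ\gamma(\R\setminus(a,b))$ is contained in a neighborhood of the affine hyperplane $W' := \horproj(g_0) + W$. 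Since $\horproj\circ\gamma([a,b])$ has diameter at most $(b-a)+C$ and its endpoints $\horproj\circ\gamma(a),\horproj\circ\gamma(b)$ already lie in this neighborhood, the whole image $\horproj\circ\gamma(\R)$ sits in some enlarged neighborhood of $W'$, establishing alternative~\ref{enum:quasigeod:hyperplane}. The main technical obstacle I anticipate is the bookkeeping in the hard branch: one must verify that the same $M$ works uniformly for \emph{every} left/right distribution of auxiliary points around $[a,b]$, since this uniformity is exactly what Lemma~\ref{Euclidean:lemma} needs, and one must pay close attention to the telescoping of parameter gaps so that the constant propagates as $(r+2)^{s-1}C$ rather than something worse.
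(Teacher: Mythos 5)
Your proposal is correct and takes essentially the same route as the paper: the same one-layer-at-a-time reduction with constant $(r+2)$ per layer, powered by the same three ingredients, namely Proposition~\ref{prop:lower step triangle inequality} applied to a tuple consisting of $\gamma(a),\gamma(b)$ and $r+1$ auxiliary points, Lemma~\ref{lemma:size lower bound}, and Lemma~\ref{Euclidean:lemma}. The only difference is the contrapositive arrangement — you deduce from a single failure of the lower bound a uniform upper bound on $\Size$ and hence alternative~\ref{enum:quasigeod:hyperplane}, whereas the paper assumes \ref{enum:quasigeod:hyperplane} fails, extracts configurations of arbitrarily large width, and makes the correction term smaller than an arbitrary $\epsilon$ — a purely presentational distinction.
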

\begin{proof}
Assume \ref{enum:quasigeod:hyperplane} does not hold.
We claim that it is enough to show that $\stepproj\circ\gamma $ is a $(1, C_1)$-quasi-geodesic with $C_1:=(r+2)C$. Indeed, then we can iterate: the curve $\stepproj\circ\gamma$ has the same projection as $\gamma$ on $G/\brkt{G}{G}$. Thus, \ref{enum:quasigeod:hyperplane} does not hold for $\stepproj\circ\gamma$ either, and we have that $\multistepproj{s-2}\circ\stepproj\circ\gamma$ is a $(1,C_2)$-quasi-geodesic in $G/\exp(V_{s-1}\oplus V_s)$ with $C_2=(r+2)C_1=(r+2)^2C$. We repeat until after $(s-1)$ steps we get that $\horproj\circ\gamma=\multistepproj{1}\circ\cdots \circ\stepproj\circ\gamma$ is a $(1,(r+2)^{s-1}C)$-quasigeodesic.

As with Theorem~\ref{thm:quantified tangent}, the upper bound follows immediately from the projection $\stepproj$ being $1$-Lipschitz. Thus it is enough to show the lower bound $\abs{b-a}-C_1\leq d(\stepproj\circ\gamma(a),\stepproj\circ\gamma(b))$, for all $a,b\in \R$.

Set $\Gamma:=\gamma({\R\setminus[a,b]})$ and fix an arbitrary basepoint $\bar{t}\in \R\setminus[a,b]$. Since \ref{enum:quasigeod:hyperplane} does not hold for $\gamma$, the same is true for any translation of $\gamma$. Therefore we can assume without loss of generality that $\gamma(\bar{t})=\idelem$.

Fix an arbitrary $\epsilon>0$. Let $K>0$ be the constant of Proposition~\ref{prop:lower step triangle inequality} and let $c>0$ be the constant of Lemma~\ref{lemma:size lower bound}. Since $\gamma([a,b])$ is a bounded set, the failure of \ref{enum:quasigeod:hyperplane} for $\gamma$ implies that $\Gamma$ is also not contained in any neighborhood of any hyperplane. Since $G/\brkt{G}{G}$ and $\R^r$ are biLipschitz equivalent, Lemma~\ref{Euclidean:lemma} implies that $\Width(\horproj(P))$ is not bounded as $P$ varies in $\Gamma^r$. In particular, we may fix some $P\in\Gamma^r$ such that
\begin{equation}\label{eq:P:large width}
	\Width(\horproj(P)) > \frac{K^{s-1}d(\gamma(a),\gamma(b))^s}{c\epsilon^{s-1}}.
\end{equation}
Consider the tuple $\configuration:=(\gamma(t_0),\dots,\gamma(t_{r+2}))$, where
\[ \{\gamma(t_0),\dots,\gamma(t_{r+2})\} = P\cup \{\gamma(\bar{t}),\gamma(a),\gamma(b)\}, \]
with the times $t_j$ ordered so that $t_0< \dots< t_{r+2}$. 

By the definition of $\Gamma$ and $\bar{t}$, the points $\gamma(a)$ and $\gamma(b)$ are necessarily consecutive in this ordering, so there is some $\ell\in\{1,\dots,r+2\}$ such that $t_{\ell-1}=a$ and $t_\ell = b$. Denote by $\configuration_P\in \Gamma^{r+1}$ the tuple $E$ without $\gamma(a)$ and $\gamma(b)$, i.e.,
\[ \configuration_P := (\gamma(t_0),\dots,\gamma(t_{\ell-2}),\gamma(t_{\ell+1}),\dots,\gamma(t_{r+2})). \]
Applying Proposition~\ref{prop:lower step triangle inequality} with the above $\configuration$ and $\ell$, we get the bound
\begin{align}\label{eq:asymptotic cut and correct}
d(\gamma(t_0), \gamma(t_{r+2})) 
\leq d(\stepproj\circ\gamma(a), \stepproj\circ \gamma(b))+ \sum_{j\neq \ell} d(\gamma(t_{j-1}),\gamma(t_j))&
\\\nonumber+ K\left(\frac{d(\gamma(a), \gamma(b))^s}{\Size(\configuration_P)}\right)^{\frac{1}{s-1}}.&
\end{align}
Estimating the distances along $\gamma$ by \eqref{eq:euclidean quasigeod:1,C} gives
\[ \sum_{j\neq \ell} d(y_{j-1}, y_{j}) 
\leq \sum_{j\neq \ell}\abs{t_{j-1}-t_j} + (r+1)C = \abs{t_0-t_{r+2}}-\abs{a-b}+(r+1)C \]
and
\[ d(\gamma(t_0), \gamma(t_{r+2})) \geq \abs{t_0-t_{r+2}} - C. \]
Applying the above distance estimates to \eqref{eq:asymptotic cut and correct} and reorganizing terms, we get the lower bound
\begin{equation}\label{eq:asymptotic lower bound}
	d(\stepproj\circ\gamma(a), \stepproj\circ \gamma(b)) \geq \abs{a-b} - (r+2)C - K\left(\frac{d(\gamma(a), \gamma(b))^s}{\Size(\configuration_P)}\right)^{\frac{1}{s-1}},
\end{equation}
which is exactly the desired lower bound except for the final term.

However, since $\gamma(\bar{t})=\idelem$, applying Lemma~\ref{lemma:size lower bound} with $\ell$ such that $t_\ell=\bar{t}$ gives
\begin{equation}\label{eq:combined size lower bound}
	\Size(\configuration_P) \geq c\cdot\Width(\horproj(P)).
\end{equation}
Bounding $\Size(\configuration_P)$ by \eqref{eq:combined size lower bound} and $\Width(\horproj(P))$ by \eqref{eq:P:large width}, the lower bound \eqref{eq:asymptotic lower bound} is simplified to
\[ d(\stepproj\circ\gamma(a), \stepproj\circ \gamma(b)) \geq \abs{a-b} - (r+2)C - \epsilon. \]
Since $\epsilon>0$ was arbitrary, we have the desired quasi-geodesic lower bound.
\end{proof}

The second possible conclusion \ref{enum:quasigeod:horizontal quasigeod} in Theorem~\ref{thm:quasigeod} is that $\horproj\circ\gamma$ is a quasi-geodesic in the normed space $G/\brkt{G}{G}$. We next show that in the case of an inner product space, quasi-geodesics are well behaved on the large scale. Namely, every rough geodesic in $\R^n$ has a unique asymptotic cone and this asymptotic cone is a line. The result can also be deduced from \cite[Theorem~2.5]{Rassias-Xiang-2002-stability-of-approximate-isometries}. Note that not all $(1,C)$-quasi-geodesics are a finite distance from a line, since for example graphs of $1/2$-Hölder functions $\R\to\R$ are $(1,C)$-quasi-geodesics in $\R^2$.

\begin{lemma}\label{Euclid:rough:geodesic}
Every $(1,C)$-quasi-geodesic $\gamma:\R\to \R^n$ in a Euclidean space $\R^n$ has a unique blowdown and the blowdown is a line.
\end{lemma}
\begin{proof}
Translating and reparametrizing if necessary, we may assume that $\gamma(0)=0$. 
We denote by $\angle(t,s)$ the angle formed by $\gamma(t)$ and $\gamma(s)$ at 0. Its magnitude is given by the standard inner product on $\R^n$ via
\begin{equation}\label{eq:euclidean quasigeod:angle}
\cos\angle(t,s) = \frac{\gamma(t)\cdot\gamma(s)}{\abs{\gamma(t)}\abs{\gamma(s)}}.
\end{equation}
We first show that as $t,s\to\infty$, the angle vanishes, i.e., $1-\cos\angle(t,s)\to 0$. By symmetry we can assume that $t\geq s\geq 0$. 

In an inner product space we have for all $x,y $ the identity
\[ 2\abs{x}\abs{y}-2x\cdot y = \abs{x-y}^2- (\abs{x}-\abs{y})^2. \]
Combining \eqref{eq:euclidean quasigeod:angle} and the above identity for $x=\gamma(t)$ and $y=\gamma(s)$, we get
\begin{align}\label{eq:angle estimate}
1-\cos\angle(t,s) &= \frac{2\abs{\gamma(t)}\abs{\gamma(s)}-2\gamma(t)\cdot\gamma(s)}{2\abs{\gamma(t)}\abs{\gamma(s)}}
\\\nonumber&=\frac{\abs{\gamma(t)-\gamma(s)}^2-(\abs{\gamma(t)}-\abs{\gamma(s)})^2}{2\abs{\gamma(t)}\abs{\gamma(s)}}.
\end{align}
The quasi-geodesic bound \eqref{eq:euclidean quasigeod:1,C} and the assumption $t\geq s\geq 0$ imply that
\begin{align*}
\abs{\gamma(t)-\gamma(s)}^2-(\abs{\gamma(t)}-\abs{\gamma(s)})^2
&\leq (t-s+C)^2-(t-s-2C)^2
\\&=6C(t-s)-3C^2
\leq 6Ct.
\end{align*}
Moreover, the bound \eqref{eq:euclidean quasigeod:1,C} implies also that when $t,s\geq 2C$ we have
\begin{align*}
\abs{\gamma(t)}\abs{\gamma(s)} 
&\geq (t-C)(s-C)
\geq \frac{1}{4}ts.
\end{align*}
Estimating \eqref{eq:angle estimate} using the above two inequalities, we get for all $t,s\geq 2C$ the upper bound
\[ 1-\cos\angle(t,s)\leq \frac{6t}{\frac{1}{4}ts}=\frac{24}{s} \]
and hence $\angle(t,s)\to 0$ as $t\geq s\to\infty$. Repeating a similar argument for $t\leq s\leq 0$, we see also that $\angle(t,s)\to 0$ as $t\leq s\to-\infty$.

From this estimate of angles we conclude that the limit directions $v_+=\lim\limits_{t\to\infty}\gamma(t)/\abs{\gamma(t)}$ and $v_-=\lim\limits_{t\to-\infty}\gamma(t)/\abs{\gamma(t)}$ always exist. We claim that this implies that the asymptotic cone $\lim\limits_{h\to\infty}\gamma_h$ exists without taking any subsequences, thus proving uniqueness.

First, observe that the existence of the limit direction and $\gamma$ being a quasi-geodesic implies that also $\lim\limits_{t\to\infty}\gamma(t)/t=v_+$. Indeed, for any $t>C$, by \eqref{eq:euclidean quasigeod:1,C}, we have
\[ \abs{\frac{\gamma(t)}{t}-\frac{\gamma(t)}{\abs{\gamma(t)}}} 
=\frac{\abs{\gamma(t)}\abs{\abs{\gamma(t)}-t}}{t\abs{\gamma(t)}}
\leq \frac{(t+C)C}{t(t-C)}\to 0 \]
as $t\to\infty$. This implies that $\lim\limits_{h\to\infty}\gamma_h(1)=v_+$. For arbitrary $t>0$,
\[ \lim\limits_{h\to\infty}\gamma_h(t) = \lim\limits_{h\to\infty}\frac{\gamma(ht)}{h} = t\lim\limits_{h\to\infty}\frac{\gamma(ht)}{ht}=tv_+. \]
Similarly $\lim\limits_{h\to\infty}\gamma_h(t)=-tv_-$ for all $t<0$, proving existence and uniqueness of the blowdown.

To see that the unique limit is a line, i.e., that $v_-=-v_+$, it suffices to observe that any blowdown of a $(1,C)$-quasi-geodesic in $\R^n$ is a geodesic in $\R^n$, and geodesics in $\R^n$ are lines.
\end{proof}

Combining Theorem~\ref{thm:quasigeod} with Lemma~\ref{Euclid:rough:geodesic} allows us to conclude the lower rank subgroup containment for blowdowns of rough geodesics in \subriemannian Carnot groups:
\begin{corollary}\label{cor:subriemannian quasiblowdowns}
If $\gamma: \R\to G$ is a $(1,C)$-quasi-geodesic in a \subriemannian Carnot group $G\neq \R$, then there exists a proper Carnot subgroup $H<G$
containing every element of $\Asymp(\gamma)$.
\end{corollary}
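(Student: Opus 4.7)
The plan is to combine Theorem~\ref{thm:quasigeod} with Proposition~\ref{Euclid:rough:geodesic} to show that every element of $\Asymp(\gamma)$ projects into a proper linear subspace of $V_1$, and then to lift this subspace to a proper Carnot subgroup of $G$ by using that blowdowns are horizontal curves.

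First I would apply Theorem~\ref{thm:quasigeod} to $\gamma$ and handle the two alternatives in parallel. In case \ref{enum:quasigeod:hyperplane}, the image of $\horproj\circ\gamma$ lies in an $R$-neighborhood of a hyperplane $W\subset V_1$, where $W$ passes through the origin because the proof of Lemma~\ref{Euclidean:lemma} constructs it as a linear span. Since $\horproj$ is a homomorphism commuting with dilations, $\horproj\circ\gamma_h(t)$ lies in the $(2R/h)$-neighborhood of $W$, which shrinks to $W$ as $h\to\infty$; hence every blowdown $\sigma\in\Asymp(\gamma)$ satisfies $\horproj\circ\sigma\subset W$. In case \ref{enum:quasigeod:horizontal quasigeod}, $\horproj\circ\gamma$ is a $(1,C')$-quasi-geodesic in the Euclidean space $V_1$, so by Proposition~\ref{Euclid:rough:geodesic} it has a unique blowdown, which is a line $\ell\subset V_1$ through the origin; again because $\horproj$ commutes with dilations, $\horproj\circ\sigma=\ell$ for every $\sigma\in\Asymp(\gamma)$. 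Setting $V':=W$ in the first case and $V':=\ell$ in the second, we obtain in each case a proper linear subspace $V'\subsetneq V_1$: the line $\ell$ is proper because $G\neq\R$ together with the \subriemannian assumption forces $\dim V_1\geq 2$.

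Finally, let $\mathfrak{h}\subset\g$ be the Lie subalgebra generated by $V'$ and $H:=\exp(\mathfrak{h})$ the corresponding Carnot subgroup, which is proper since its first layer $V'$ is strictly smaller than $V_1$. It remains to show that $H$ contains $\sigma(\R)$ for every $\sigma\in\Asymp(\gamma)$. Each such $\sigma$ is a 1-Lipschitz curve (the additive error $C$ is killed by the rescaling factor $1/h\to 0$), hence a horizontal curve in the \subriemannian Carnot group $G$, with left-translated derivative $\sigma'(t)\in V_1$ almost everywhere. Under the identification $V_1\cong\g/\brkt{\g}{\g}$, the projection $\horproj$ restricts to the identity on $V_1\subset\g$, so the inclusion $\horproj\circ\sigma\subset V'$ at the level of positions differentiates to $\sigma'(t)\in V'$ almost everywhere, and thus $\sigma$ stays inside $H$. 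The only subtle step in the argument is the neighborhood collapse in case \ref{enum:quasigeod:hyperplane}: this relies crucially on $W$ being a \emph{linear} (not just affine) hyperplane, which is exactly the form in which Lemma~\ref{Euclidean:lemma} provides it.
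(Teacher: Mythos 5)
Your proof is correct and follows essentially the same route as the paper: split according to the two alternatives of Theorem~\ref{thm:quasigeod}, use Proposition~\ref{Euclid:rough:geodesic} in the second case, and conclude that every blowdown (being a horizontal curve through the identity whose horizontal projection lies in a proper subspace of $V_1$) is contained in the Carnot subgroup generated by that subspace. You spell out more detail than the paper does --- the dilation collapse of the hyperplane neighborhood and the differentiation/control argument for containment in $H$ --- but the underlying argument is the same.
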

\begin{proof}
	Consider the two cases of Theorem~\ref{thm:quasigeod}.
	In the first case \ref{enum:quasigeod:hyperplane}, the horizontal projection is in a finite neighborhood of a hyperplane, $\image(\horproj\circ\gamma)\subset B_{V_1}(W,R)$. Thus any blowdown $\sigma\in\Asymp(\gamma)$ has its horizontal projection completely contained in $W$. Since $\sigma(0)=\idelem$, it follows that $\sigma$ is contained in the Carnot subgroup $H$ generated by $W$. The rank of $H$ is by construction the dimension of $W$, which is smaller than the rank of $G$.
	
	In the second case \ref{enum:quasigeod:horizontal quasigeod}, the horizontal projection $\pi\circ\gamma$ is a $(1,C')$-quasi-geodesic. Thus by Lemma~\ref{Euclid:rough:geodesic} it has a unique blowdown $\sigma$, which is a line. But then $H:=\sigma(\R)$ is itself a one-parameter subgroup containing all blowdowns, proving the claim.
\end{proof}

\section{Infinite geodesics}\label{sec:infinite geodesics}
\subsection{Abnormality of blowdowns of geodesics}
Let $G$ be a \subriemannian Carnot group, so that on the first layer $V_1$ of the Lie algebra $\g$ we have an inner product $\innerproduct{\cdot}{\cdot}$.
Every geodesic $\gamma:I\to G$ on a finite interval $I\subset\R$ is then a solution of the Pontryagin maximum principle. In \subriemannian Carnot groups the principle takes the form
\begin{equation}\label{PMP:Lie}\tag{PMP}
\lambda\left( \int_I \Ad_{\gamma (t)} v(t)\, dt\right)=\xi\innerproduct{u_\gamma}{v} \quad \forall v\in L^2(I;V_1),
\end{equation}
for some $\lambda \in \g^*$ and $\xi\in \mathbb R$ such that $(\lambda,\xi)\neq (0,0)$, see \cite{LMPOV} for the calculation of the differential of the endpoint map. Here, $u_\gamma\in L^2(I;V_1)$ denotes the control of $\gamma$. 

A curve is abnormal exactly when it satisfies \ref{PMP:Lie} with $\xi=0$ for some $\lambda\in\g^*\setminus\{0\}$.
In the case of a geodesic $\gamma:J\to\R$ on an unbounded interval $J\subset\R$, there exists a pair $(\lambda,\xi)\neq (0,0)$ for which \ref{PMP:Lie} is satisfied for every bounded subinterval $I\subset J$.

In this section we will consider properties of asymptotic cones of geodesics from the point of view of the Pontryagin maximum principle. The next lemma describes what happens to \ref{PMP:Lie} for dilations of geodesics.

\begin{lemma}\label{lemma:dilated PMP}
	Let $\gamma:I\to G$ be a horizontal curve in $G$ that satisfies \ref{PMP:Lie} for a pair $(\lambda,\xi)$. Then for any $h>0$, the dilated curve $\gamma_h:I_h\to G$ satisfies \ref{PMP:Lie} for the pair $(\dil{h}^*\lambda,h\xi)$.
\end{lemma}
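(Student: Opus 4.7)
The plan is to substitute the defining formula $\gamma_h(t) = \dil{1/h}(\gamma(\bar t)^{-1}\gamma(\bar t + ht))$ into the statement of \ref{PMP:Lie} for $\gamma$, perform the change of variables $s = \bar t + ht$, and match the result termwise with \ref{PMP:Lie} for $\gamma_h$. The only two ingredients needed beyond bookkeeping are the transformation laws for the control $u_\gamma$ and for the operator $\Ad_{\gamma(t)}$ under the combined time rescaling and group dilation.

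First I would compute the control of $\gamma_h$. Since $\dil{h}$ is a Lie group automorphism acting on $V_1$ as scalar multiplication by $h$, differentiating $\gamma_h$ using left-invariance of the horizontal distribution yields the clean identity $u_{\gamma_h}(t) = u_\gamma(\bar t + ht)$: the chain-rule factor $h$ from the time rescaling is exactly cancelled by the factor $\tfrac{1}{h}$ coming from $(\dil{1/h})_*|_{V_1}$. Next, from $\gamma(\bar t + ht) = \gamma(\bar t)\cdot\dil{h}(\gamma_h(t))$ and the fact that $\dil h$ is an automorphism, naturality of the adjoint representation gives
\[
    \Ad_{\gamma(\bar t+ht)} = \Ad_{\gamma(\bar t)}\circ (\dil{h})_*\circ \Ad_{\gamma_h(t)}\circ (\dil{1/h})_*.
\]

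Now I would fix an arbitrary test function $w \in L^2(I_h;V_1)$, set $v(s) := w((s-\bar t)/h) \in L^2(I;V_1)$, and plug $v$ into \ref{PMP:Lie} for $\gamma$. On the right-hand side, the substitution $s = \bar t + ht$ (which contributes $ds = h\,dt$) turns $\xi\innerproduct{u_\gamma}{v}$ into $h\xi\innerproduct{u_{\gamma_h}}{w}$. On the left-hand side, the same substitution produces an overall factor $h$, which after inserting the $\Ad$ identity above is exactly cancelled by the factor $\tfrac{1}{h}$ from $(\dil{1/h})_*w(t) = w(t)/h$. The remaining composition applied to $\int_{I_h}\Ad_{\gamma_h(t)}w(t)\,dt$ is $\lambda\circ \Ad_{\gamma(\bar t)}\circ (\dil{h})_*$; under the standard convention $\gamma(\bar t) = \idelem$ (to which we reduce by a harmless left translation, since the left-invariant formulation of \ref{PMP:Lie} only changes $\lambda$ by composition with an $\Ad$), the $\Ad_{\gamma(\bar t)}$ factor drops, leaving $\dil h^*\lambda$ by the very definition of the pullback. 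This is precisely \ref{PMP:Lie} for $\gamma_h$ with the pair $(\dil h^*\lambda, h\xi)$.

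The main obstacle is purely bookkeeping: three different sources of factors of $h$ — the Jacobian of the time rescaling, the action of $\dil{1/h}$ on $V_1$, and the action of $(\dil h)_*$ on the higher layers of $\g$ — must be tracked to see that they organize into precisely the claimed dilation pullback $\dil h^*\lambda$ and the scalar $h\xi$. The left-translation reduction to $\gamma(\bar t) = \idelem$ is benign because $\Ad_{\gamma(\bar t)}^*$ is a linear automorphism of $\g^*$ that merely relabels the covector $\lambda$ without affecting validity of \ref{PMP:Lie}.
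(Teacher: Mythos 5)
Your proof is correct and follows essentially the same route as the paper's: the same conjugation identity $\Ad_{\gamma(\bar t+ht)}=(\dil{h})_*\circ\Ad_{\gamma_h(t)}\circ(\dil{1/h})_*$ (the paper implicitly works with $\gamma(\bar t)=\idelem$, so the extra $\Ad_{\gamma(\bar t)}$ factor you track is absent there), the same cancellation of the chain-rule $h$ against $(\dil{1/h})_*|_{V_1}$ in the control, and the same change of variables pairing test functions on $I$ with those on $I_h$. Your explicit remark that the left-translation reduction only relabels $\lambda$ by the linear automorphism $\Ad_{\gamma(\bar t)}^*$ is a slightly more careful treatment of a point the paper glosses over.
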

\begin{proof}
	We may suppose without loss of generality that the interval $I$ is bounded and the dilation 
	\[ \gamma_h(t) = \dil{1/h}\circ\gamma(\bar{t}+ht) \]
	is happening at $\bar{t}=0$.
	The dilations are homomorphisms, so by the definition of $\Ad_g$ as the differential of the conjugation $x\mapsto gxg^{-1}$, the map $\Ad_{\gamma(t)}$ can be written in terms of $\Ad_{\gamma_h(t)}$ as
	\[ \Ad_{\gamma(t)} = \Ad_{\dil{h}\circ\gamma_h(t/h)} =  (\dil{h})_*\circ\Ad_{\gamma_h(t/h)}\circ(\dil{1/h})_*. \]
	Therefore, \ref{PMP:Lie} for $\gamma$ gives the identity
	\begin{equation}\label{eq:dilated PMP}
	\xi\innerproduct{u_\gamma}{v}
	= \lambda\left(\int_{I} \Ad_{\gamma(t)}v(t)\,dt \right)
	= (\dil{h}^*\lambda)\left(\int_{I} \Ad_{\gamma_h(t/h)}\frac{1}{h}v(t)\,dt \right).
	\end{equation}
	Denote for each $v\in L^2(I;V_1)$ by $\tilde{v}\in L^2(I_h;V_1)$ the reparametrized function $\tilde{v}(t) = v(ht)$. Then after a change of variables, the right hand side of \eqref{eq:dilated PMP} is
	\begin{equation}\label{dilpmp:change of vars 1}
		\int_{I} \Ad_{\gamma_h(t/h)}\frac{1}{h}v(t)\,dt = \int_{I_h} \Ad_{\gamma_h(t)}\tilde{v}(t)\,dt.
	\end{equation}
	Since the control $u_h$ of the dilated curve $\gamma_h$ is
	\[ u_h(t) = (\dil{1/h})_*u_\gamma(ht)\cdot h = u_\gamma(ht), \]
	a similar change of variables as in \eqref{dilpmp:change of vars 1} shows that
	\begin{equation}\label{dilpmp:change of vars 2}
		\innerproduct{u_\gamma}{v} = \int_I u_\gamma(t)v(t)\,dt = \int_{I}u_h(t/h)\tilde{v}(t/h)\,dt = h\int_{I_h}u_h(t)\tilde{v}(t)\,dt = h\innerproduct{u_h}{\tilde{v}}.
	\end{equation}
	Applying both changes of variables \eqref{dilpmp:change of vars 1} and \eqref{dilpmp:change of vars 2} to \eqref{eq:dilated PMP} gives the identity
	\[ h\xi\innerproduct{u_h}{\tilde{v}} = (\dil{h}^*\lambda)\left(\int_{I_h} \Ad_{\gamma_h(t)}\tilde{v}(t)\,dt\right). \]
	Since every element of $L^2(I_h;V_1)$ can be written as $\tilde{v}$ for some $v\in L^2(I;V_1)$, the above shows that $\gamma_h$ satifies \ref{PMP:Lie} for the pair $(\dil{h}^*\lambda,h\xi)$.
\end{proof}

In every \subfinsler Carnot group horizontal lines through the identity are infinite geodesics that are dilation invariant. Hence, the unique blowdown of any horizontal line is the line itself translated to the identity, which may or may not be abnormal. For all other curves however, every blowdown is necessarily an abnormal curve:
\begin{proposition}\label{prop:abnormal blowdowns}
	In \subriemannian Carnot groups asymptotic cones of non-line infinite geodesics are abnormal curves.
\end{proposition}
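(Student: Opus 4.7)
The plan is to apply the Pontryagin maximum principle to $\gamma$, track how the multiplier $(\lambda,\xi)$ transforms under dilation via Lemma~\ref{lemma:dilated PMP}, renormalize so that the pair has a non-zero limit, and pass to the limit along any sequence $h_j\to\infty$ realizing a blowdown $\sigma\in\Asymp(\gamma)$. Abnormality of $\sigma$ will come from the $\xi$-component of the limit pair being forced to $0$.

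First I would fix $(\lambda,\xi)\neq(0,0)$ for which $\gamma$ satisfies \ref{PMP:Lie} on every bounded interval, and decompose $\lambda = \lambda_1+\dots+\lambda_s$ along the dual stratification $\g^* = V_1^*\oplus\dots\oplus V_s^*$. The key preliminary claim is that $\lambda$ cannot lie entirely in $V_1^*$. Indeed, since $\Ad_{\gamma(t)}v - v \in V_2\oplus\dots\oplus V_s$ for every $v\in V_1$, a multiplier $\lambda\in V_1^*$ would make \ref{PMP:Lie} collapse to $\innerproduct{\lambda^{\#}}{v} = \xi\innerproduct{u_\gamma}{v}$ for every $v\in L^2(I;V_1)$, where $\lambda^{\#}\in V_1$ represents $\lambda|_{V_1}$ via the inner product. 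This forces $\lambda=0$ when $\xi=0$, and $u_\gamma\equiv\lambda^{\#}/\xi$ when $\xi\neq 0$ (making $\gamma$ a line), both of which are excluded. Hence there exists a largest $k_0\geq 2$ with $\lambda_{k_0}\neq 0$.

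Next, Lemma~\ref{lemma:dilated PMP} gives that $\gamma_h$ satisfies \ref{PMP:Lie} with $(\dil{h}^*\lambda, h\xi)$, and by linearity in the pair also with the rescaled pair $(h^{-k_0}\dil{h}^*\lambda,\, h^{1-k_0}\xi)$. Since $\dil{h}^*$ acts on $V_k^*$ as multiplication by $h^k$,
\[ h^{-k_0}\dil{h}^*\lambda = \sum_{k\leq k_0} h^{k-k_0}\lambda_k \longrightarrow \lambda_{k_0} \quad\text{as } h\to\infty, \]
while $h^{1-k_0}\xi\to 0$ because $k_0\geq 2$. Writing $\sigma = \lim_j \gamma_{h_j}$ and passing \ref{PMP:Lie} to the limit on each bounded interval $I\subset\R$ will then yield $\lambda_{k_0}\bigl(\int_I \Ad_{\sigma(t)}v(t)\,dt\bigr) = 0$ for every $v\in L^2(I;V_1)$, so $\sigma$ satisfies \ref{PMP:Lie} with the abnormal pair $(\lambda_{k_0}, 0)$, with $\lambda_{k_0}\neq 0$.

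The main technical point is justifying this passage to the limit. Uniform convergence $\gamma_{h_j}\to\sigma$ on $I$, together with the polynomial dependence of $\Ad$ on $\log$ in a nilpotent group, gives pointwise convergence of $\Ad_{\gamma_{h_j}(t)}v(t)$ to $\Ad_{\sigma(t)}v(t)$, with a uniform bound since the $\gamma_{h_j}$ lie in a common compact set over $I$; dominated convergence then carries the integral to the limit. On the right-hand side the pointwise bound $|u_{\gamma_{h_j}}|\leq 1$ keeps $\innerproduct{u_{\gamma_{h_j}}}{v}$ uniformly bounded, so the prefactor $h_j^{1-k_0}\to 0$ annihilates the whole term. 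Conceptually, the only step where the \subriemannian (rather than \subfinsler) structure is actually used is the preliminary claim, where the inner product on $V_1$ is what allows one to convert ``$\lambda\in V_1^*$'' into the constancy of the control $u_\gamma$.
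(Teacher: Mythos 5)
Your proof is correct and follows essentially the same route as the paper: decompose $\lambda$ into homogeneous layers, rule out $\lambda\in V_1^*$ using that $\gamma$ is not a line, rescale the pair from Lemma~\ref{lemma:dilated PMP} by $h^{-k_0}$ so that it converges to $(\lambda_{k_0},0)$ with $\lambda_{k_0}\neq 0$, and pass \ref{PMP:Lie} to the limit. The extra detail you supply on the dominated-convergence step is a welcome elaboration of the paper's ``by continuity,'' but it is not a different argument.
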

\begin{proof} The argument is partially inspired by \cite{Agrachev:Torino}.
Let $\gamma$ be a geodesic in $G$ and let $(\lambda,\xi)\in\g^*\times\R$ be a pair for which $\gamma$ satisfies \ref{PMP:Lie}. We decompose $\lambda$ as $\lambda=\lambda^{(1)}+\dots+ \lambda^{(s)}\in V_1^*\oplus\dots\oplus V_s^*\simeq \g^*$ and let $j\in\{1,\dots,s\}$ be the largest index for which $\lambda^{(j)}\neq 0$.

If $\lambda^{(2)}=\dots=\lambda^{(s)}=0$, then \ref{PMP:Lie} reduces to
\[ \lambda\Big(\int_I v\Big)=\xi\innerproduct{u_\gamma}{v}\quad \forall v\in L^2(I;V_1) \]
on every finite interval $I\subset\R$. Thus if $\lambda^{(2)}=\dots=\lambda^{(s)}=0$, then $u_\gamma$ is constant and $\gamma$ is a line. Assume from now on that $\gamma$ is not a line, so $j\geq 2$.

By Lemma~\ref{lemma:dilated PMP} the dilated curve $\gamma_h$ satisfies \ref{PMP:Lie} for the pair $(\dil{h}^*\lambda,h\xi)$. In terms of the decomposition into layers, we have
\[ \dil{h}^*\lambda = \dil{h}^*(\lambda^{(1)}+\dots+ \lambda^{(j)}) =h\lambda^{(1)}+\dots +h^{j}\lambda^{(j)}. \]
Note that \ref{PMP:Lie} is scale invariant with respect to the covector pair. Therefore scaling by $\frac{1}{h^j}$, we see that $\gamma_h$ satisfies \ref{PMP:Lie} also for the pair $\frac{1}{h^j}(\dil{h}^*\lambda,h\xi)=(\frac{1}{h^j}\dil{h}^*\lambda,\frac{1}{h^{j-1}}\xi)$. 
These pairs form a convergent sequence as $h\to\infty$:
\[ \lim\limits_{h\to\infty}(\frac{1}{h^j}\dil{h}^*\lambda,\frac{1}{h^{j-1}}\xi) = (\lambda_\infty,0), \]
where
\[ \lambda_\infty:=  \lim\limits_{h\to\infty}(h^{1-j}\lambda^{(1)}+h^{2-j}\lambda^{(2)}+\dots+ \lambda^{(j)}) = \lambda^{(j)}\neq 0. \]
Let $\sigma\in \Asymp(\gamma)$, so there exists some sequence $h_j\to\infty$ for which $\sigma=\lim\limits_{j\to\infty}\gamma_{h_j}$. By continuity, it follows that $\sigma$ satisfies \ref{PMP:Lie} for the pair $(\lambda_\infty,0)$, so $\sigma$ is an abnormal curve.
\end{proof}

\subsection{Rigidity of geodesics}
Proposition~\ref{prop:abnormal blowdowns} leads to the question of what abnormal infinite geodesics exist. Any curve in a Carnot group $G$ is abnormal in a product $G\times H$, where $H$ is any non-abelian Carnot group, so any infinite geodesic could be artificially made abnormal. Examples in non-product spaces can also be found by lifting such abnormals. However we are not aware of any examples of infinite abnormal geodesics in rank 2 Carnot groups, or even of \emph{strictly} abnormal infinite geodesics in Carnot groups of any rank. 

One issue in trying to find infinite (strictly) abnormal geodesics is that it is difficult to prove that any extremal is minimizing for all time. Currently known examples of non-line infinite geodesics such as the one studied in Section~\ref{sec:Engel} arise from a complete optimal synthesis. This is however often unfeasible, as the Hamiltonian system for the normal equations is often non-integrable in higher dimensional problems, see for instance the non-integrability results in \cite{BBKM:2016:nonintegrability-rank3-step3,LokutsievskiiSachkov:2018:liouville-step4}. 

In \subriemannian Carnot groups of step 2 the only infinite geodesics are lines \cite[Proposition~2.2]{Kishimoto}. The same rigidity result can be extended to all \subfinsler Carnot groups with a strictly convex norm \cite{Hakavuori-2020-step2_geodesics}. That is, if a strictly abnormal infinite geodesic exists, it must be found in step 3 or above.

\section{Applications of the existence of a line tangent}\label{sec:exists line tangent}
We next provide some consequences of the existence of a line tangent. 
\subsection{Loss of optimality}\label{sec:LOSS}
We prove that there are geodesics that lose optimality whenever they are extended.

\begin{proposition}\label{Prop:lost:min}
In every non-Abelian \subfinsler Carnot group defined by a strictly convex norm (e.g., in every \subriemannian Carnot group) there exist finite-length geodesics that cannot be extended as geodesics.
\end{proposition}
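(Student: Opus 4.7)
My plan is a proof by contradiction: I will assume every finite geodesic in $G$ admits a proper extension and derive that every infinite geodesic must be a horizontal line, contradicting non-abelianness of $G$.

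First, I would produce a finite geodesic that is not a subarc of any line. Since $G$ is non-abelian, pick $u_1, u_2 \in V_1$ with $[u_1,u_2]\neq 0$ and set $q := \exp(u_1)\exp(u_2)$. The Baker-Campbell-Hausdorff formula shows that $\log q$ has a nonzero $V_2$-component, so $q$ lies on no one-parameter horizontal subgroup through $\idelem$. Any geodesic $\gamma:[0,L]\to G$ from $\idelem$ to $q$ is therefore not a restriction of a horizontal line.

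Assume for contradiction that every finite geodesic in $G$ extends. Using Zorn together with Arzel\`a-Ascoli (uniform limits of $1$-Lipschitz geodesics are geodesics), I would upgrade $\gamma$ to an infinite geodesic $\tilde\gamma: \R \to G$. Corollary~\ref{cor:tangent iteration} then yields, at every $t \in \R$, an iterated tangent of $\tilde\gamma$ that is a horizontal line $s \mapsto \exp(s v(t))$ with $v(t) \in V_1$. At Lebesgue points of the control $u_{\tilde\gamma}$, the BCH formula identifies $v(t)$ with $u_{\tilde\gamma}(t)$. For such a Lebesgue point $t_0$, the horizontal line $\ell(s) := \tilde\gamma(t_0) \exp\bigl(s\cdot u_{\tilde\gamma}(t_0)\bigr)$ is an infinite geodesic through $\tilde\gamma(t_0)$ with the same horizontal velocity as $\tilde\gamma$. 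Strict convexity of the norm on $V_1$ then lets me invoke the Pontryagin maximum principle (via the Legendre transform from velocities to momenta) to conclude uniqueness of the strictly normal infinite geodesic through $\tilde\gamma(t_0)$ with prescribed initial velocity, forcing $\tilde\gamma$ to coincide with $\ell$ on a neighborhood of $t_0$. Since a.e.\ $t$ is a Lebesgue point, $u_{\tilde\gamma}$ is essentially constant, so $\tilde\gamma$ is globally a horizontal line. Applied to the infinite extension of the geodesic from $\idelem$ to $q$, this would put $q$ on a horizontal line through $\idelem$, contradicting the construction. A final Zorn argument on the poset of geodesic extensions of a non-extendable seed (closing up by continuity of $1$-Lipschitz limits) then selects a maximal finite-length geodesic realizing the proposition.

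The main obstacle is the local rigidity step: promoting the iterated line tangent at a single Lebesgue point to genuine local linearity of $\tilde\gamma$. Strict convexity is essential here, as it delivers uniqueness of normal geodesics with a prescribed initial velocity, but abnormal extremals could in principle obstruct this uniqueness. I would need to check either that an infinite extension whose iterated tangents are lines is automatically strictly normal, or to rule out abnormal infinite geodesics with a prescribed Lebesgue velocity directly under the extendability hypothesis.
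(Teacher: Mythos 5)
Your argument has a fatal gap at the local rigidity step, and the intermediate statement it aims for is actually false. You want to show that, under the extendability hypothesis, every infinite geodesic is a horizontal line, by arguing that at a Lebesgue point $t_0$ the line $\ell(s)=\tilde\gamma(t_0)\exp(s\,u_{\tilde\gamma}(t_0))$ is the \emph{unique} infinite geodesic through $\tilde\gamma(t_0)$ with that velocity. But a prescribed initial velocity does not determine a normal extremal: the covector $\lambda\in\g^*$ in \ref{PMP:Lie} has components in $V_2^*\oplus\dots\oplus V_s^*$ that are invisible to the initial velocity, and infinitely many distinct normal geodesics share the same initial horizontal direction (already in the Heisenberg group, all the geodesic arcs through a point tangent to a given horizontal vector). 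More decisively, ``every infinite geodesic in a strictly convex \subfinsler Carnot group is a line'' is false: Section~\ref{sec:Engel} exhibits an explicit non-line infinite geodesic $\beta$ in the \subriemannian Engel group (Lemma~\ref{lemma:Engel explicit geodesic}). That curve is smooth, so its blowup at every point \emph{is} the horizontal line in the direction $u_\beta(t_0)$, yet $\beta$ coincides with no line on any neighborhood of any point. Hence a line tangent with the correct direction cannot be promoted to local linearity, no contradiction is reached, and the strategy cannot be repaired. (The Zorn/Arzel\`a--Ascoli upgrade to an infinite extension and the observation that $\exp(u_1)\exp(u_2)$ lies on no horizontal one-parameter subgroup are fine, but they do not save the argument.)

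The paper's proof avoids infinite geodesics and uniqueness claims entirely and instead exploits a symmetry. It takes the endpoint to be $\exp(v)$ with $0\neq v\in V_2$, so that the isometry $\delta_{-1}$ (acting by $(-1)^j$ on $V_j$) fixes both $\idelem$ and $\exp(v)$ and turns a geodesic $\gamma$ from $\idelem$ to $\exp(v)$ into a second geodesic $\delta_{-1}\circ\gamma$ between the same endpoints whose blowup at $0$ is the reversed line. If $\gamma$ admitted a backward extension $\tilde\gamma|_{[-\eps,0]}$, the concatenation of $\tilde\gamma|_{[-\eps,0]}$ with $\delta_{-1}\circ\gamma$ would still be a geodesic (same endpoints, same length), but its blowup at the junction would be the non-injective corner $t\mapsto\exp(-\abs{t}X)$, which cannot be a geodesic. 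Some device of this kind --- producing two distinct geodesics between the same endpoints with incompatible tangents at one endpoint --- is what you need in place of a uniqueness argument.
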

\begin{proof}
For every such group $G$, we know that the only infinite geodesics in $G/\brkt{G}{G}$ are lines.
Therefore, by Corollary~\ref{cor:tangent iteration} every geodesic has an iterated tangent that is a line. Since iterated tangents are tangents, we have that every geodesic in $G$ has a line tangent.

Fix a nonzero element $v\in V_2$, which exists since $G$ is not Abelian. Let $\gamma:[0,T]\to G$ be a geodesic with $\gamma(0)=\idelem$ and $\gamma(T)=\exp(v)$. We claim that any such geodesic cannot be extended to a geodesic $\tilde\gamma:[-\eps,T]\to G$ such that $\tilde\gamma|_{ [0,T] } = \gamma$ for any $\eps>0$.

Let $\delta_{-1}:G\to G$ be the group homomorphism such that $(\delta_{-1})_*(v)=(-1)^j v$ for all $v\in V_j$. The map $\delta_{-1}$ is an isometry, since $(\delta_{-1})_*|_{V_1}$ is an isometry. Notice that $\delta_{-1} \circ \gamma$ is another\footnote{We learned this trick for proving non-uniqueness of geodesics in Carnot groups from \cite[Proposition~3.2]{Berestovskii_Busemann}} geodesic from $\idelem$ to $\exp(v)$.
 
Suppose that an extension $\tilde{\gamma}:[\eps,T]\to G$ of $\gamma$ existed. By the existence of a line tangent outlined in the first paragraph, we have that there exists a sequence $h_j\to 0$ such that
\[ \tilde\gamma_{h_j}= \delta_{\frac{1}{h_j}}\circ \tilde\gamma\circ \delta_{h_j}\to \sigma, \]
with $\sigma(t)=\exp(tX)$ for some $X\in V_1$. 
Replace $\gamma$ by $\delta_{-1}\circ \gamma$ in the extension $\tilde{\gamma}$, i.e., consider the concatenated curve
\[ \eta:=\tilde\gamma|_{ [-\eps,0] } * (\delta_{-1}\circ \gamma). \]
Since $\gamma$ and $\delta_{-1}\circ \gamma$ are both geodesics with the same endpoints, and $\tilde{\gamma}$ was a geodesic extension of $\gamma$, the curve $\eta$ is also a geodesic.
However, $\eta$ has a blowup at $0$ that is not injective: for $t<0$,
\[ \eta_{\eps_j}(t)= (\delta_{\frac{1}{\eps_j}}\circ \eta\circ \delta_{\eps_j})(t)=\tilde\gamma_{\eps_j}(t)\to 
\exp(tX) \]
whereas for $t>0$,
\[ \eta_{\eps_j}(t)= (\delta_{\frac{1}{\eps_j}}\circ\delta_{-1}\circ \gamma\circ \delta_{\eps_j})(t)=
\delta_{-1}\circ\tilde \gamma_{\eps_j}(t)\to 
\delta_{-1} \exp(tX)= \exp(-tX). \]
Any blowup of the geodesic $\eta$ would have to be a geodesic, but this blowup is not even injective, so we get a contradiction.
\end{proof}

\subsection{Non-minimality of corners for distributions of non-constant rank}
In the previous work \cite{Hakavuori_LeDonne_2016} we proved that corners cannot be length-minimizing in any \subriemannian manifold, in which by standard definition the distribution has constant rank. We will next show that the existence of a line tangent implies that the assumption that the distribution has constant rank can be omitted.

\begin{proposition}\label{prop:non-constant rank corners}
	Let $M$ be a generalized \subriemannian manifold with a distribution not necessarily of constant rank. If $\gamma:I\to M$ is a geodesic, then $\gamma$ cannot have a corner-type singularity.
\end{proposition}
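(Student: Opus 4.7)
My plan is to reduce the non-constant-rank case to the iterated tangent result already established for sub-Riemannian manifolds, namely Theorem~\ref{thm:sr tangent iteration}. Suppose for contradiction that $\gamma$ has a corner-type singularity at some $t_0\in I$. By the very definition of a corner, there exists a metric tangent $\sigma\in\Tang(\gamma,t_0)$ which, inside the nilpotent approximation $\tilde{M}=G/H$ of $M$ at $\gamma(t_0)$, is the concatenation of two horizontal half-lines of the form $\sigma(t)=\exp(tX_+)H$ for $t\geq 0$ and $\sigma(t)=\exp(tX_-)H$ for $t\leq 0$, with $X_+\neq -X_-$ (otherwise the two half-lines would align into a single line).

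The first key observation is that corners are invariant under dilations at the vertex. Since the Carnot dilations $\dil{h}$ of $G$ descend to $G/H$ and fix each horizontal one-parameter subgroup, a direct computation gives
\[ \sigma_h(t) = \dil{1/h}\bigl(\sigma(0)^{-1}\sigma(ht)\bigr) = \sigma(t) \qquad \forall\, h>0,\ t\in\R. \]
Consequently $\sigma\in\Tang(\sigma,0)$, and by iterating $\sigma\in\Tang^{k}(\gamma,t_0)$ for every $k\geq 1$.

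The second step is to invoke Theorem~\ref{thm:sr tangent iteration}: if $s$ is the step of the distribution of $M$ at $\gamma(t_0)$, then every element of $\Tang^{s}(\gamma,t_0)$ must be a line. Combined with the previous step, this forces $\sigma$ to be a line, contradicting the fact that $\sigma$ is a corner.

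The main point that needs to be checked is that both the metric tangent construction and Theorem~\ref{thm:sr tangent iteration} continue to make sense when the distribution has non-constant rank. For the former, the nilpotent approximation of a generalized sub-Riemannian manifold at any point is still a homogeneous space $G/H$ with $G$ a Carnot group and $H$ a closed dilation-invariant subgroup, by \cite[Theorem~2.7]{jeancontrol}, and the definition of corner in the metric sense (together with \cite{Monti_Pigati_Vittone:tangent_cones}) does produce a tangent of the two-half-line form described above. For the latter, the proof of Theorem~\ref{thm:sr tangent iteration} uses only this homogeneous-space description of the tangent together with the Carnot-group result Corollary~\ref{cor:tangent iteration}, and therefore carries over without modification.
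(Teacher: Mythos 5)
Your high-level strategy (force a line among the iterated tangents via Theorem~\ref{thm:sr tangent iteration} and contradict the corner) is the same one the paper uses, but two of your steps are asserted rather than proved, and they are exactly where the difficulty of the non-constant-rank case lies. First, you claim that a corner-type singularity of $\gamma$ at $t_0$ produces a metric tangent $\sigma$ that is literally the concatenation of two distinct horizontal half-lines in the nilpotent approximation. This does not follow ``by the very definition'': a corner-type singularity is a statement about the one-sided derivatives of $\gamma$ in coordinates, whereas a blowup of $\gamma$ is governed by limits of the rescaled controls, and existence of one-sided derivatives does not give $L^2_{\mathrm{loc}}$-convergence of the rescaled controls to two constant values. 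The paper argues in the opposite (and easier) direction: from the existence of a line tangent it deduces, via \cite[Remark 3.12]{Monti_Pigati_Vittone:tangent_cones}, that the rescaled controls $u^{(j)}$ converge to a constant $v$ in $L^2_{\mathrm{loc}}$ along some sequence $h_j\to 0$, and then computes that both one-sided difference quotients $\gamma(\pm h_j)/(\pm h_j)$ converge to $\sum_k v_k X_k(\gamma(0))$, so the one-sided derivatives, if they exist, must coincide. Your dilation-invariance observation ($\sigma_h=\sigma$, hence $\sigma\in\Tang^k$ for all $k$) is fine once one has such a $\sigma$, but producing it is the missing step.

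Second, you assert that Theorem~\ref{thm:sr tangent iteration} and the nilpotent-approximation machinery ``carry over without modification'' to a distribution of non-constant rank. That theorem is stated and proved for sub-Riemannian manifolds in the standard (constant-rank) sense, and its proof relies on \cite[Theorem 3.6]{Monti_Pigati_Vittone:tangent_cones} and \cite[Theorem 2.7]{jeancontrol}, which are formulated in that setting; at a singular point of the distribution the privileged coordinates, the step, and the rank of the first layer all behave badly, and this is precisely what the proposition is meant to overcome. The paper does not extend those results to singular points. Instead it passes to a desingularization $\tilde{M}$ of $M$ (\cite[Lemma 2.5]{jeancontrol}), which is equiregular, lifts $\gamma$ to a geodesic $\tilde{\gamma}$ there using that the projection is a submetry, applies Theorem~\ref{thm:sr tangent iteration} to $\tilde{\gamma}$, and transfers the conclusion back to $M$ through the fact that $\gamma$ and $\tilde{\gamma}$ share the same control with respect to corresponding horizontal frames. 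Without this desingularization step, or a genuine proof that the tangent-cone results hold at non-regular points, your argument leaves a gap at the crux of the statement.
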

\begin{proof}
	Reparametrizing and translating, it suffices to consider the case when $I=(-\epsilon,\epsilon)$ and show that $\gamma$ cannot have a corner-type singularity at 0.
	
	Let $\tilde{M}$ be a desingularization of $M$ as in \cite[Lemma~2.5]{jeancontrol}, that is an equiregular \subriemannian manifold equipped with a canonical projection $\pi:\tilde{M}\to M$. Since the projection is a submetry, the geodesic $\gamma$ can be lifted to a geodesic $\tilde{\gamma}:I\to\tilde{M}$. Let $u:(-\epsilon,\epsilon)\to \R^r$ be the control of $\tilde{\gamma}$ with respect to a fixed frame $\tilde{X}_1,\dots,\tilde{X}_r$ of horizontal vector fields on $\tilde{M}$. Then $u$ is also the control of $\gamma$ with respect to the projected horizontal frame $\pi_*\tilde{X}_1,\dots,\pi_*\tilde{X}_r$ on $M$.
	
	By Theorem~\ref{thm:sr tangent iteration}, the curve $\tilde{\gamma}$ has an iterated tangent  that is a line, and thus also a tangent that is a line. By \cite[Remark 3.12]{Monti_Pigati_Vittone:tangent_cones} it follows that there exist a constant $v\in\R^r$ and a sequence of scales $h_j\to 0$ such that for the rescaled controls $u^{(j)}:(-\epsilon/h_j,\epsilon/h_j)\to\R^r$, $u^{(j)}(t)=u(h_jt)$, we have $u^{(j)}\to v$ in $L^2_{\text{loc}}(\R;\R^r)$.
	
	On the other hand, in coordinates near $\gamma(0)$ on $M$, for any small enough $h_j$ we have
	\begin{align*}
	\frac{\gamma(h_j)}{h_j} &= \int_{0}^{h_j}\sum_{k=1}^ru_k(t)X_k(\gamma(t))\,\frac{dt}{h_j}
	= \int_{0}^1\sum_{k=1}^ru_k^{(j)}(t)X_k(\gamma(h_jt))\,dt\quad\text{and}\\
	\frac{\gamma(-h_j)}{-h_j} &= \int_{-h_j}^0\sum_{k=1}^ru_k(t)X_k(\gamma(t))\,\frac{dt}{h_j}
	= \int_{-1}^0\sum_{k=1}^ru_k^{(j)}(t)X_k(\gamma(h_jt))\,dt.
	\end{align*}
	By continuity of the vector fields $X_1,\dots,X_r$ and the convergence $u^{(j)}\to v$ in $L^2_{\text{loc}}$, we see that
	\begin{equation*}
	\lim\limits_{j\to\infty}\frac{\gamma(h_j)}{h_j} = \sum_{k=1}^{r}v_kX_k(\gamma(0)) = \lim\limits_{j\to\infty}\frac{\gamma(-h_j)}{-h_j}.
	\end{equation*}
	In particular, if $\gamma$ has one-sided derivatives at $0$, then they must be equal, so $\gamma$ cannot have a corner-type singularity.
\end{proof}

\section{On sharpness of Theorem~\ref{thm:geodesic blowdowns}}\label{sec:Sharp}
In this section we want to consider whether Theorem~\ref{thm:geodesic blowdowns} can be improved. In particular, we will show that in the statement of the theorem, taking the horizontal projection is essential. That is, there exist geodesics that are not in a finite neighborhood of any proper Carnot subgroup (see Corollary~\ref{cor:geodesic not in a line nbhd}). 

A possible improvement of Theorem~\ref{thm:geodesic blowdowns} would be to strengthen the claim in the horizontal projection. Namely, the following might be true.

\begin{conjecture}\label{conj:geodesic blowdowns} 
	If $\gamma:\mathbb{R}\to G$ is a geodesic such that $\horproj\circ\gamma:\mathbb{R}\to G/\brkt{G}{G}$ is not a geodesic, then there exists a hyperplane $W\subset V_1$ such that $\lim\limits_{t\to\pm\infty}d(\horproj\circ\gamma(t),W) = 0$.
\end{conjecture}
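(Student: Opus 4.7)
The plan is to promote the qualitative containment $\horproj\circ\gamma\subset B_{V_1}(W,R)$ of Theorem~\ref{thm:geodesic blowdowns} to the quantitative asymptotic decay $d(\horproj\circ\gamma(t),W)\to 0$. A first easy observation is that every blowdown $\sigma\in\Asymp(\gamma)$ satisfies $\horproj\circ\sigma\subset W$: dilation by $1/h$ shrinks $B_{V_1}(W,R)$ to $B_{V_1}(W,R/h)$, so any uniform limit of $\gamma_h$ has horizontal projection in $W$. This already implies that $d(\horproj\circ\gamma(t),W)$ grows sublinearly in $t$, but the gap between sublinear growth and genuine $o(1)$ decay is where the difficulty lies.

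First I would argue by contradiction, assuming a sequence $t_n\to+\infty$ with $d(\horproj\circ\gamma(t_n),W)\to d^*>0$. Form the pointed translates $\tilde\gamma_n(t):=\gamma(t_n)^{-1}\gamma(t_n+t)$, which are $1$-Lipschitz geodesics fixing $\idelem$, so by Ascoli--Arzel\`a a subsequence converges uniformly on compact sets to an infinite geodesic $\tilde\gamma_\infty:\R\to G$. Since $W$ is a linear subspace, $\horproj\circ\tilde\gamma_n$ lies in the translated strip $B_{V_1}(W-u_n,R)$, where $u_n$ is the component of $\horproj\circ\gamma(t_n)$ orthogonal to $W$, with $\|u_n\|\to d^*$. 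Passing to the limit, $\horproj\circ\tilde\gamma_\infty\subset B_{V_1}(W-u^*,R)$ for some $u^*$ with $\|u^*\|=d^*$. Applying Theorem~\ref{thm:geodesic blowdowns} to $\tilde\gamma_\infty$ yields either a line horizontal projection (necessarily through $0$ and parallel to $W$) or confinement to a second hyperplane strip; since two strips with transverse hyperplane directions intersect in a bounded set, the direction of the new hyperplane must agree with that of $W$.

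The key next step would be to iterate and extract rigidity. I would select $t_n$ so as to realize $\limsup_{t\to\infty} d(\horproj\circ\gamma(t),W)$, producing a translation limit $\tilde\gamma_\infty$ whose horizontal projection attains its maximal displacement $d^*$ from $W$ at $t=0$. Every blowdown of $\tilde\gamma_\infty$ then has horizontal projection in $W$, and by Proposition~\ref{prop:abnormal blowdowns} is an abnormal curve (unless $\tilde\gamma_\infty$ itself is a horizontal line, which is excluded by the hypothesis that $\horproj\circ\gamma$ is not a geodesic). A sufficiently strong classification of abnormal geodesics sitting in a fixed proper horizontal subspace should then force $u^*=0$, contradicting $\|u^*\|=d^*>0$; an analogous argument handles $t\to-\infty$.

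The hard part is exactly this rigidity statement. The estimates of Section~\ref{Sec2} control displacements only up to the uniform constant $R$; they cannot distinguish a $\gamma$ converging to $W$ from one oscillating permanently within the strip. A complete proof likely needs either an extension of the Hamiltonian analysis of Section~\ref{sec:Hamilton} to arbitrary step, controlling the asymptotics of the normal extremal ODE, or a sharp description of abnormal geodesics whose horizontal projection lies in a fixed hyperplane, in the spirit of the Engel computations of Section~\ref{sec:Engel} but carried out in full generality. Neither tool is presently available in the required generality, which is precisely why the statement is framed as a conjecture.
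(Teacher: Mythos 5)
The statement you are proving is stated in the paper as Conjecture~\ref{conj:geodesic blowdowns}: the authors offer no proof of it, only supporting evidence in the Engel group (the explicit geodesic of Lemma~\ref{lemma:Engel explicit geodesic}, whose horizontal projection is indeed asymptotic to a line by Corollary~\ref{cor:Engel geodesic asymptotic properties}) and a counterexample to a stronger formulation (Corollary~\ref{cor:geodesic not in a line nbhd}). Your proposal is correctly calibrated in spirit---you identify that the real problem is upgrading the uniform bound $\image(\horproj\circ\gamma)\subset B_{V_1}(W,R)$ of Theorem~\ref{thm:geodesic blowdowns} to genuine decay---but it is not a proof, and the gap is exactly where you place it. The decisive step, ``a sufficiently strong classification of abnormal geodesics sitting in a fixed proper horizontal subspace should then force $u^*=0$,'' invokes a classification that does not exist, and nothing in your compactness setup produces a contradiction on its own: the translated limit $\tilde\gamma_\infty$ passes through $\idelem$, so its horizontal projection contains $0\in W$ no matter what $d^*$ is, and confinement to the strip $B_{V_1}(W-u^*,R)$ is perfectly compatible with $\|u^*\|=d^*>0$ whenever $d^*\leq R$. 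A geodesic could a priori oscillate within the strip forever; neither the size/minimal-height estimates of Section~\ref{Sec2} (which only control displacements up to the fixed constant $R$) nor the abnormality of blowdowns (Proposition~\ref{prop:abnormal blowdowns}) rules this out. To your credit, you say as much in your last paragraph.

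Two smaller points. First, your reduction ``two strips with transverse hyperplane directions intersect in a bounded set'' is false once $\operatorname{rank} G\geq 3$: there the intersection of two transverse hyperplane strips in $\R^r$ contains an unbounded neighborhood of a codimension-two subspace, so the conclusion that the second hyperplane must be parallel to $W$ does not follow as stated. Second, even granting a well-chosen sequence $t_n$ realizing the $\limsup$, the limit $\tilde\gamma_\infty$ need not inherit any extremality of the displacement at $t=0$ in a usable form, since the $\limsup$ over all of $\R_+$ is not a pointwise maximum. In summary: what you have written is a reasonable research plan that isolates the open difficulty, but the statement remains unproved, consistent with its status in the paper as a conjecture.
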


Toward this conjecture, we shall consider the case of rank 2 Carnot groups, where proper Carnot subgroups are simply lines. 
For this reason, in the next subsection we first prove some general statements about lines that are a finite distance apart.

\subsection{Lines in Carnot groups}\label{sec:Lines}
A {\em  line} in a Lie group is a left-translation of a one-parameter subgroup, i.e., a curve $L:\R\to G$ such that $L(t)=g\exp(tX)$ for some $g\in G$ and $X\in\mathfrak{g}$. We stress that  in case $G$ is a Carnot group the vector  $X$ is not assumed to be  horizontal.

The distance between lines will be measured by the Hausdorff distance:
The Hausdorff distance of two subsets $A,B\subset G$ is 
\[ \dhaus(A,B) := \max\Big(\sup_{a\in A}d(a,B),\sup_{b\in B}d(b,A)\Big). \]
In Lemma~\ref{lemma:lines at a finite distance} we will give two equivalent algebraic conditions for two lines to be at a bounded distance from each other. In the proof we will want to use also the notion of distance of lines given by the sup-norm, which is parametrization dependent. For this reason we first prove a sufficient condition (Lemma~\ref{lemma:finite hausdorff distance}) for the equivalence of boundedness of Hausdorff distance and boundedness of sup-norm.
This result is naturally stated in much more generality than just lines in Carnot groups.

\begin{lemma}\label{lemma:finite hausdorff distance}
	Let $X$ and $Y$ be metric spaces, and let $\alpha:X\to Y$ and $\beta:X\to Y$ be maps such that the following conditions hold.
	\begin{enumerate}[label=(\alph{*})]
		\item\label{lenum:finite hausdorff:beta bounded} The map $\beta$ is bornologous: For every $R<\infty$, there exists $R'<\infty$ such that $\beta(B_X(x,R))\subset B_Y(\beta(x),R')$ for any $x\in X$.
		\item\label{lenum:finite hausdorff:combined expansive} The map $\alpha\times\beta:X^2\to Y^2$ maps distant points to distant points: For every $M<\infty$, there exists $R<\infty$ such that $d(\alpha(x_1),\beta(x_2))> M$ for any $x_1,x_2$ with $d_X(x_1,x_2)>R$.
	\end{enumerate}
	Then $\dhaus(\alpha(X),\beta(X)) < \infty$ if and only if $\sup_{x\in X}d(\alpha(x),\beta(x)) < \infty$.
\end{lemma}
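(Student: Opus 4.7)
The forward implication, that $\sup_{x\in X}d(\alpha(x),\beta(x)) < \infty$ implies $\dhaus(\alpha(X),\beta(X)) < \infty$, is immediate from the definition of Hausdorff distance: every $\alpha(x)$ is within the supremum of $\beta(x)\in\beta(X)$, and symmetrically for $\beta(x)$. So the real content is in the reverse implication.

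My plan for the reverse implication is a triangle inequality passing through an auxiliary point. Given $x\in X$, I will first use the assumption $\dhaus(\alpha(X),\beta(X))\leq M$ to find some $x'\in X$ such that $\beta(x')$ is close to $\alpha(x)$, and then estimate $d(\alpha(x),\beta(x))$ by routing through $\beta(x')$. The nontrivial point is bounding the auxiliary term $d(\beta(x'),\beta(x))$ uniformly in $x$, which is exactly what the two hypotheses are designed to achieve: condition (b) will force $x'$ to be close to $x$ in $X$, and then condition (a) will translate that into $\beta(x')$ being close to $\beta(x)$ in $Y$.

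Concretely, set $M := \dhaus(\alpha(X),\beta(X))$ and fix an arbitrary $x\in X$. By definition of the Hausdorff distance, I can pick $x'\in X$ with $d(\alpha(x),\beta(x'))\leq M+1$. Applying condition (b) with constant $M+1$ yields $R<\infty$ (independent of $x$) such that $d_X(x_1,x_2)>R$ implies $d(\alpha(x_1),\beta(x_2))>M+1$; the contrapositive forces $d_X(x,x')\leq R$. Feeding this $R$ into condition (a) produces $R'<\infty$ (again independent of $x$) with $d(\beta(x),\beta(x'))\leq R'$. The triangle inequality
\[ d(\alpha(x),\beta(x)) \leq d(\alpha(x),\beta(x')) + d(\beta(x'),\beta(x)) \leq (M+1) + R' \]
then gives the uniform bound.

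There is no serious obstacle; the argument is essentially a bookkeeping exercise. The only structural point worth noting is the asymmetry between $\alpha$ and $\beta$ in the hypotheses — only $\beta$ is assumed bornologous — and this asymmetry is accommodated automatically by the order in which the triangle $\alpha(x)\to\beta(x')\to\beta(x)$ is traversed, so that the bornologous map is applied to the short $X$-segment $x\mapsto x'$.
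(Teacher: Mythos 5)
Your proof is correct and follows essentially the same route as the paper: choose for each $x$ a rough nearest point $x'$ with $d(\alpha(x),\beta(x'))\leq M+1$, use hypothesis (b) in contrapositive to bound $d_X(x,x')$ uniformly, then hypothesis (a) to bound $d(\beta(x'),\beta(x))$, and conclude by the triangle inequality. The paper merely packages the choice of $x'$ as a (possibly discontinuous) map $f:X\to X$; the argument is otherwise identical.
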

\begin{proof}
	Clearly $\dhaus(\alpha(X),\beta(X)) \leq \sup_{x\in X}d(\alpha(x),\beta(x))$ so it suffices to prove the ``only if'' implication. That is, we assume that $M:=\dhaus(\alpha(X),\beta(X))<\infty$ and we will show that also $\sup_{x\in X}d(\alpha(x),\beta(x)) < \infty$.
	
	By the definition of the Hausdorff distance, we have $d(\alpha(x),\beta(X))\leq M$ for every $x\in X$. Therefore there exists a (possibly discontinuous) map $f:X\to X$ choosing roughly closest points from $\beta(X)$, i.e., a map such that
	\begin{equation}\label{eq:closest point map}
		d(\alpha(x),\beta\circ f(x)) \leq M+1\quad\forall x\in X.
	\end{equation}
	Let $R<\infty$ be the constant given by the assumption \ref{lenum:finite hausdorff:combined expansive} such that $d(\alpha(x_1),\beta(x_2))>M+1$ for any $x_1,x_2\in X$ with $d(x_1,x_2)>R$. Then the bound \eqref{eq:closest point map} implies that 
	\begin{equation*}
		d(x,f(x)) \leq R\quad \forall x\in X.
	\end{equation*}
	Assumption \ref{lenum:finite hausdorff:beta bounded} then implies that there exists $R'<\infty$ such that
	\begin{equation}\label{eq:beta offset bound}
		d(\beta(x),\beta\circ f(x))\leq R'\quad\forall x\in X.
	\end{equation}
	Combining the bounds \eqref{eq:closest point map} and \eqref{eq:beta offset bound}, we get for any $x\in X$ the uniform bound
	\[ d(\alpha(x),\beta(x))\leq d(\alpha(x),\beta\circ f(x))+d(\beta\circ f(x),\beta(x))\leq M+1+R'<\infty, \]
	proving the claim.
\end{proof}

\begin{lemma}\label{lemma:lines at a finite distance}
Assume $G$ is a  Carnot group and let $L_1(t)=g\exp(tX)$ and $L_2(t)=h\exp(tY)$ be two lines in the group. The following are equivalent:
\begin{enumerate}[label=(\roman{*})]
	\item\label{lenum:lines:direction vectors} There exists a constant $c>0$ such that $X=c\Ad_{g^{-1}h}Y$.
	\item\label{lenum:lines:right translation} There exist a constant $c>0$ and an element $k\in G$ such that $L_1(t) = L_2(ct)k$.
	\item\label{lenum:lines:hausdorff distance} $\dhaus(L_1(\R_+),L_2(\R_+))<\infty$.
\end{enumerate}
\end{lemma}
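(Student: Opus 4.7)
The plan is to prove (i) $\Leftrightarrow$ (ii), then (ii) $\Rightarrow$ (iii), and finally (iii) $\Rightarrow$ (i) by induction on the step $s$ of $G$.

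For (i) $\Leftrightarrow$ (ii), evaluating $L_1(t) = L_2(ct)k$ at $t = 0$ forces $k = h^{-1}g$; substituting back and using the conjugation identity $h\exp(Z)h^{-1} = \exp(\Ad_h Z)$, the equality $L_1(t) = L_2(ct)k$ for all $t$ becomes $\exp(tX) = \exp(ct\,\Ad_{g^{-1}h}Y)$, which by injectivity of $\exp$ in a simply connected nilpotent group is $X = c\,\Ad_{g^{-1}h}Y$. For (ii) $\Rightarrow$ (iii), left-invariance of $d$ gives $d(L_1(t), L_2(ct)) = d(L_2(ct)k, L_2(ct)) = d(\idelem, k)$, independent of $t$, whence $\dhaus(L_1(\R_+), L_2(\R_+)) \leq d(\idelem, k) < \infty$.

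The substantial implication is (iii) $\Rightarrow$ (i). The base case $s = 1$ is the classical fact that two rays in a normed vector space lie at finite Hausdorff distance if and only if their direction vectors are positive multiples of one another (here $\Ad$ is trivial). For the inductive step, project both lines via the submetry $\stepproj: G \to \bar G := G/\exp(V_s)$ of Proposition~\ref{prop:quotient}: since $\stepproj$ is 1-Lipschitz, the images remain at finite Hausdorff distance in $\bar G$, and the inductive hypothesis applied in $\bar G$ (of step $s - 1$) produces some $c > 0$ such that $\stepproj_* X = c\,\Ad_{\stepproj(g)^{-1}\stepproj(h)}\,\stepproj_* Y$. Lifting back, $W := X - c\,\Ad_k Y$ lies in $V_s$, where $k := g^{-1}h$, and the remaining task is to show $W = 0$. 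Using centrality of $V_s$ in $\g$ and the identity $k\exp(sY) = \exp(s\,\Ad_k Y)\,k$, a direct computation yields
\[
L_1(t)^{-1}L_2(s) = \exp\bigl((s-ct)\,\Ad_k Y\bigr)\,\exp(-tW)\,k.
\]

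For each $t$ pick $s(t) \geq 0$ with $d(L_1(t), L_2(s(t))) \leq \dhaus(L_1(\R_+), L_2(\R_+)) + 1$, and set $u(t) := s(t) - ct$. When $\stepproj_* Y \neq 0$, projecting the above identity to $\bar G$ eliminates $\exp(-tW)$; since any nontrivial one-parameter subgroup of $\bar G$ is unbounded, this uniform bound forces $|u(t)|$ to stay bounded in $t$. Back in $G$, centrality of $V_s$ allows $L_1(t)^{-1}L_2(s(t)) = \exp(-tW + B(t))$ with $B(t) \in \g$ bounded, and applying the dilation $\delta_{t^{-1/s}}$ together with the scaling $d(\idelem, \delta_h(g)) = h\,d(\idelem, g)$ yields $d(\idelem, L_1(t)^{-1}L_2(s(t))) \sim t^{1/s}\,d(\idelem, \exp(-W))$ as $t \to \infty$, which diverges unless $W = 0$. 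The corner case $\stepproj_* Y = 0$ forces both $X, Y \in V_s$; there $\Ad_k$ acts trivially on $V_s$, $L_1(t)^{-1}L_2(s) = k\exp(-tX + sY)$, and an analogous homogeneity estimate combined with $s \geq 0$ forces $X = cY$ for some $c > 0$ in $V_s$. The hardest step will be the homogeneity estimate in $V_s$, but it reduces to the standard scaling of the Carnot-Carath\'eodory distance under $\delta_h$.
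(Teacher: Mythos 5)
Your proposal is correct, and while it shares the paper's overall skeleton (the same algebraic treatment of \ref{lenum:lines:direction vectors}$\Leftrightarrow$\ref{lenum:lines:right translation}, the trivial implication to \ref{lenum:lines:hausdorff distance}, and an induction on the step via the quotient $\stepproj:G\to G/\exp(V_s)$), the mechanism you use for the hard implication is genuinely different. The paper first observes that $L_1(t)^{-1}L_2(ct)$ is polynomial in exponential coordinates, so it suffices to bound $\sup_t d(L_1(t),L_2(ct))$; it then invokes the general Hausdorff-versus-sup Lemma~\ref{lemma:finite hausdorff distance}, whose hypotheses are verified by deriving a growth lower bound $d(L_1(t_1),L_2(ct_2))\geq C\abs{t_2-t_1}^{1/k}-M$ from the inductive hypothesis in the quotient. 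You instead bypass Lemma~\ref{lemma:finite hausdorff distance} and the polynomiality trick entirely: you isolate the central discrepancy $W=X-c\Ad_{g^{-1}h}Y\in V_s$ supplied by the inductive hypothesis, use the quotient to bound $u(t)=s(t)-ct$ for a roughly-closest-point selection $s(t)$, and kill $W$ by the homogeneity estimate $d(\idelem,\exp(-tW+B(t)))\sim t^{1/s}d(\idelem,\exp(-W))$ (which is exact here since $W$ is central, so $\exp(-tW)b(t)=\exp(-tW+\log b(t))$ with no BCH corrections). Your route is somewhat more self-contained for this particular lemma and yields the conclusion directly in the form \ref{lenum:lines:direction vectors} rather than passing through \ref{lenum:lines:right translation}; the paper's route factors the "finite Hausdorff distance implies finite sup-distance" step into a reusable lemma that it needs again later (e.g., in Corollary~\ref{cor:geodesic not in a line nbhd}). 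The only places where your sketch compresses real content are the corner case $X,Y\in V_s$ (where one must rule out linear independence and $c\leq 0$ using properness of $\exp$ on $V_s$ and the constraint $s(t)\geq 0$) and the fully degenerate possibilities $X=0$ or $Y=0$; both are routine and the paper is similarly brief about them.
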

\begin{proof}
	The equivalence of \ref{lenum:lines:direction vectors} and \ref{lenum:lines:right translation} is an algebraic computation: For any $k\in G$ and $Z\in\mathfrak{g}$, we have the identity
	\[ k\exp(Z) = k\exp(Z)k^{-1}k = C_{k}(\exp(Z))\cdot k = \exp(\Ad_kZ)k. \]
	For any $c>0$, we apply the above with $k=g^{-1}h$ and $Z=ctY$. This gives the identity
	\begin{align}\label{eq:adjoint vs difference}
	L_1(t)^{-1}L_2(ct)
	&=(g\exp(tX))^{-1}\cdot (h\exp(ctY))
	\\\nonumber&= \exp(-tX)\exp(ct\Ad_{g^{-1}h}Y)g^{-1}h.
	\end{align}
	If \ref{lenum:lines:direction vectors} holds, then \eqref{eq:adjoint vs difference} implies that $L_1(t)^{-1}L_2(ct)$ is constant, proving \ref{lenum:lines:right translation}. Vice versa, if  \ref{lenum:lines:right translation} holds, then $L_1(t)^{-1}L_2(ct)$ is constant, so \eqref{eq:adjoint vs difference} is constant. But this is only possible if \ref{lenum:lines:direction vectors} holds.
	
	That \ref{lenum:lines:right translation} implies \ref{lenum:lines:hausdorff distance} is immediate from the left-invariance of the distance on $G$. It remains to prove that \ref{lenum:lines:hausdorff distance} implies \ref{lenum:lines:right translation}.
	The claim is equivalent to saying that the product $L_1(t)^{-1}L_2(ct)$ is constant for some $c>0$. Since the product is in exponential coordinates a polynomial expression, it suffices to show that 
	\begin{equation}\label{eq:line sup dist finite}
		\sup_{t\in\R_+}d(L_1(t),L_2(ct))<\infty.
	\end{equation}
	We will prove this by induction on the step of the group $G$. In a normed space, two half-lines are a finite distance apart if and only if they are parallel, so the claim holds in step 1.
	
	Suppose that the claim is true for all Carnot groups of step at most $s-1$ and suppose that $G$ is of step $s$. 
	We will prove \eqref{eq:line sup dist finite} by applying Lemma~\ref{lemma:finite hausdorff distance} to the curves $\alpha(t)=L_2(ct)$ and $\beta(t)=L_1(t)$ for some $c>0$ to be fixed later. 
	
	From the identity
	\[ d(L_1(t_2),L_1(t_1)) = d(L_1(0),L_1(t_1-t_2)) \]
	we see that $R' = \sup_{\abs{t}\leq R}d(L_1(0),L_1(t))<\infty$ satisfies assumption \ref{lenum:finite hausdorff:beta bounded} of Lemma~\ref{lemma:finite hausdorff distance}.
	
	For assumption \ref{lenum:finite hausdorff:combined expansive} of  Lemma~\ref{lemma:finite hausdorff distance}, we need a lower bound for the value $d(L_1(t_1),L_2(ct_2))$. We consider first the case when the lines degenerate under the projection $\stepproj:G\to G/\exp(V_s)$ to step $s-1$, i.e., when $X,Y\in V_s$. 
	Since elements in $\exp(V_s)$ commute with everything, for any $t_1,t_2\in\R_+$ we have that 
	\begin{equation*}
	d(L_1(t_1),L_2(t_2)) = d(\idelem, g^{-1}h\exp(t_2Y-t_1X)).
	\end{equation*}
	If $Y=cX$ for some $c>0$, then condition \ref{lenum:lines:direction vectors} is satisfied, which implies \eqref{eq:line sup dist finite} by the first part of the proof. Otherwise, $t_2Y-t_1X$ escapes any compact subset of $V_s$ as $\abs{t_2-t_1}\to\infty$. Recall that in Carnot groups the exponential map is a global diffeomorphism and the distance function is proper. Hence, the lower bound
	\[ d(L_1(t_1),L_2(t_2)) \geq d(\idelem,\exp(t_2Y_2-t_1Y_1))-d(\idelem,g^{-1}h) \]
	implies that assumption \ref{lenum:finite hausdorff:combined expansive} of Lemma~\ref{lemma:finite hausdorff distance} is satisfied for any $c>0$. By Lemma~\ref{lemma:finite hausdorff distance} we conclude that in this case \eqref{eq:line sup dist finite} is satisfied for any $c>0$.
	
	Next we consider the case when at least one of the lines does not degenerate under the projection $\stepproj:G\to G/\exp(V_s)$. Since the projection $\stepproj$ is 1-Lipschitz, we have
	\[ \dhaus(\stepproj\circ L_1(\R_+),\stepproj\circ L_2(\R_+)) \leq \dhaus(L_1(\R_+),L_2(\R_+)) < \infty. \]
	Note that the above implies that also the other line cannot degenerate to a constant.
	
	By the inductive assumption in the step $s-1$ Carnot group $G/\exp(V_s)$, we can fix $c>0$ such that
	\begin{equation*}
	M:=\sup_{t\in\R_+}d(\stepproj\circ L_1(t),\stepproj\circ L_2(ct))< \infty.
	\end{equation*}
	It follows that for any $t_1,t_2\in\R_+$ we get the lower bound
	\begin{align}\label{two lines:line difference bound}
	d(L_1(t_1),L_2(ct_2))
	&\nonumber\geq d(\stepproj\circ L_1(t_1),\stepproj\circ L_2(ct_2))
	\\&\geq d(\stepproj\circ L_1(t_1),\stepproj\circ L_1(t_2)) 
	\\\nonumber&\quad- d(\stepproj\circ L_1(t_2),\stepproj\circ L_2(ct_2))
	\\\nonumber&\geq d(\stepproj\circ L_1(t_1),\stepproj\circ L_1(t_2)) -M.
	\end{align}
	Decompose the direction vector of $L_1$ into homogeneous components as $X = X_{(1)}+\dots+X_{(s)}\in V_1\oplus\dots\oplus V_s$ and let $k$ be the smallest index for which $X_{(k)}\neq 0$. Since $\stepproj\circ L_1$ is non-constant, we have $k\leq s-1$.
	By homogeneity of the distance in the projection to step $k$ we get the lower bound
	\begin{align}\label{two lines:line distance growth}
	\nonumber d(\stepproj\circ L_1(t_1),\stepproj\circ L_1(t_2))
	&\geq d(\multistepproj{k}\circ L_1(0),\multistepproj{k}\circ L_1(t_2-t_1))
	\\&=d(\multistepproj{k}(\idelem), \multistepproj{k}\circ\exp((t_2-t_1)X_{(k)}) )
	\\\nonumber &=\abs{t_2-t_1}^{1/k}d(\multistepproj{k}(\idelem), \multistepproj{k}\circ\exp(X_{(k)}) ).
	\end{align}
	Combining \eqref{two lines:line difference bound} and \eqref{two lines:line distance growth} and denoting $C:=d(\multistepproj{k}(\idelem), \multistepproj{k}\circ\exp(X_{(k)}) )>0$, we have that
	\[ d(L_1(t_1),L_2(ct_2)) \geq C\abs{t_2-t_1}^{1/k}-M. \]
	This shows that assumption \ref{lenum:finite hausdorff:combined expansive} of Lemma~\ref{lemma:finite hausdorff distance} holds for $\alpha(t)=L_2(ct)$ and $\beta(t)=L_1(t)$, so Lemma~\ref{lemma:finite hausdorff distance} implies that we have \eqref{eq:line sup dist finite}.
\end{proof}

\subsection{An explicit infinite non-line geodesic in the Engel group}\label{sec:Engel}
The \subriemannian Engel group $\engel$ is a \subriemannian Carnot group  of rank 2 and step 3 of dimension 4. Its Lie algebra $\mathfrak{g}$ has a basis $X_1,X_2,X_{12},X_{112}$ whose only non-zero commutators are
\[ \brkt{X_1}{X_2} = X_{12}\quad\text{and}\quad \brkt{X_{1}}{X_{12}}=X_{112}. \]
In \cite{Ardentov_Sachkov:engel_cut_time}, Ardentov and Sachkov studied the cut time for normal extremals in the Engel group, and found a family of infinite geodesics that are not lines. These geodesics have a property stronger than that implied by Theorem~\ref{thm:geodesic blowdowns}. Namely, instead of merely having their horizontal projections contained in a finite neighborhood of a hyperplane, their horizontal projections are in fact asymptotic to a line.

To study these infinite geodesics explicitly, we will consider exponential coordinates
\[ \R^4\to \engel,\quad x=(x_1,x_2,x_{12},x_{112})\mapsto \exp(x_1X_1+x_2X_2+x_{12}X_{12}+x_{112}X_{112}). \]
By the BCH formula, the group law is given by $x\cdot y = z$, where
\begin{align}\label{Engel:group law}
	\nonumber z_1 &= x_{1} + y_{1}\\
	z_2 &= x_{2} + y_{2}\\
	\nonumber z_{12} &= x_{12} + y_{12} + \frac{1}{2}(x_1y_2 - x_2y_1)\\
	\nonumber z_{112} &= x_{112} + y_{112} 
	+\frac{1}{2}(x_{1} y_{12} - x_{12} y_{1}) 
	+\frac{1}{12}(x_{1}^{2} y_{2} - x_{1} x_{2} y_{1} - x_{1} y_{1} y_{2} + x_{2} y_{1}^{2}).
\end{align}
The left-invariant extensions of the horizontal basis vectors $X_1,X_{2}$ are
\begin{align}\label{Engel:left invariant frame}
X_1(x) &= \partial_{1} -\frac{1}{2} x_{2}\partial_{12} -(\frac{1}{12} x_{1} x_{2} + \frac{1}{2} x_{12})\partial_{112}\quad\text{and}\\
\nonumber X_2(x) &= \partial_{2} + \frac{1}{2} x_{1}\partial_{12} + \frac{1}{12} x_{1}^{2}\partial_{112}.
\end{align}
Note that the coordinates used in \cite{Ardentov_Sachkov:engel_cut_time} are not exponential coordinates, but the two coordinate systems agree in the horizontal ($x_1$ and $x_2$) components.

Given a covector written in the dual basis as $\lambda=(\lambda_1,\lambda_2,\lambda_{12},\lambda_{112})\in\mathfrak{g}^*$, the normal equation given by \ref{PMP:Lie} takes the form
\begin{equation}\label{normal equation:Engel}
	u_\gamma(t) = \lambda\left( \Ad_{\gamma(t)}X_1 \right)X_1+\lambda\left( \Ad_{\gamma(t)}X_2 \right)X_2.
\end{equation}
In \cite{Ardentov_Sachkov:engel_cut_time}, the space of covectors $\mathfrak{g}^*$ is stratified into 7 different classes $C_1,\dots,C_7$ based on the different types of trajectories of the corresponding normal extremals. For our purposes the relevant class is $C_3$, which consists of the non-line infinite geodesics. In \cite{Ardentov_Sachkov:engel_cut_time}, the class was parametrized by 
\[ C_3 = \{(\cos(\theta+\pi/2),\sin(\theta+\pi/2),c,\alpha):\; \alpha\neq 0, \frac{c^2}{2}-\alpha\cos\theta=\abs{\alpha}, c\neq 0 \}.  \]
An example of a covector $\lambda\in\mathfrak{g}^*$ in the class $C_3$ is $\lambda = (0,1,2,1)$. However, instead of integrating the normal equation \eqref{normal equation:Engel} with this covector, we will consider a translation of the curve to simplify the asymptotic study of the resulting curve. Instead of considering the geodesic starting from $(0,0,0,0)$, we will consider the translated geodesic starting from $(2,0,0,0)$. 

If $\gamma:\R\to\engel$ satisfies \eqref{normal equation:Engel} with the covector $\lambda$, then a left-translation $\beta=g\gamma:\R\to\engel$ by $g\in\engel$ satisfies
\begin{align}\label{Engel:translated normal equation}
	u_{\beta}(t) &= \lambda\left( \Ad_{\gamma(t)}X_1 \right)X_1+\lambda\left( \Ad_{\gamma(t)}X_2 \right)X_2
	\\\nonumber&=\lambda\left( \Ad_{g^{-1}\beta(t)}X_1 \right)X_1+\lambda\left( \Ad_{g^{-1}\beta(t)}X_2 \right)X_2.
\end{align}
Using the formula $\Ad_{\exp(Y)}X = e^{\ad(Y)}X$, we compute for $x=(x_1,x_2,x_{12},x_{112})\in\engel$ that
\begin{align*}
\Ad_{x}X_1 &= X_1 - x_2X_{12} - (x_{12} + \frac{1}{2}x_1x_2)X_{112}\quad\text{and}\\
\nonumber\Ad_{x}X_2 &= X_2 + x_1X_{12} + \frac{1}{2}x_1^2X_{112}.
\end{align*}
Evaluated for the covector $\lambda=(0,1,2,1)$, we get
\begin{align}\label{Engel:infinite geodesic covector adjoints}
\lambda(\Ad_xX_1) &= -2 x_{2} - x_{12} - \frac{1}{2} x_{1} x_{2},\\
\nonumber\lambda(\Ad_xX_2) &= 1 + 2x_1 + \frac{1}{2} x_{1}^{2}.
\end{align}
By the group law \eqref{Engel:group law}, the translation of the curve in which we are interested is
\[ (2,0,0,0)^{-1} \cdot \beta = \left(\beta_{1} - 2,\,\beta_{2},\,\beta_{12} - \beta_{2},\,\beta_{112} - \beta_{12} + \frac{1}{3} \beta_{2} + \frac{1}{6} \beta_{1} \beta_{2}\right) .\]
Substituting the points $x=(2,0,0,0)^{-1} \cdot \beta(t)$ into \eqref{Engel:translated normal equation} using \eqref{Engel:infinite geodesic covector adjoints}, we get the ODE
\begin{align}\label{Engel:infinite geodesic ODE}
\dot{\beta}_1 &
=-\frac{1}{2} \beta_{1} \beta_{2} - \beta_{12}
&
\dot{\beta}_2 &
=\frac{1}{2} \beta_{1}^{2} - 1.
\end{align}
\begin{figure}
	\begin{center}
		\includegraphics{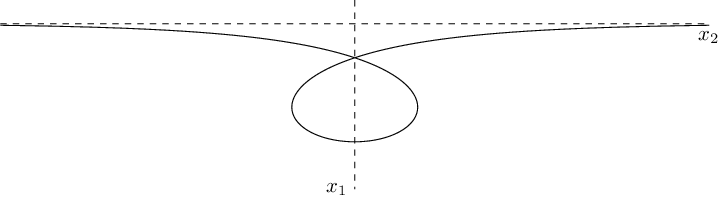}
	\end{center}
	\caption{Horizontal projection of the non-line Engel infinite geodesic $\beta$ (rotated $90^\circ$ clockwise)}\label{figure:engel geodesic projection}
\end{figure}

\begin{lemma}\label{lemma:Engel explicit geodesic}
	The horizontal curve $\beta:\R\to \engel$ satisfying the ODE \eqref{Engel:infinite geodesic ODE} with the initial condition $\beta(0)=(2,0,0,0)$ has the explicit form (see Figure~\ref{figure:engel geodesic projection})
	\begin{align*}
	\beta_1(t)&=\frac{2}{\cosh(t)},&
	\beta_2(t)&=2 \, \tanh(t)-t,\\
	\beta_{12}(t)&=\frac{t}{\cosh(t)},&
	\beta_{112}(t)&=\frac{2}{3}\tanh(t) - \frac{t}{3 \cosh(t)^{2}}.
	\end{align*}
\end{lemma}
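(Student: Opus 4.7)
The plan is direct verification by way of uniqueness for the Cauchy problem. The system \eqref{Engel:infinite geodesic ODE} only prescribes $\dot\beta_1$ and $\dot\beta_2$, but since $\beta$ is horizontal, the identity $\dot\beta = \dot\beta_1 X_1(\beta) + \dot\beta_2 X_2(\beta)$ together with the left-invariant frame \eqref{Engel:left invariant frame} forces
\begin{align*}
\dot\beta_{12} &= \tfrac{1}{2}\bigl(\beta_1\dot\beta_2 - \beta_2\dot\beta_1\bigr),\\
\dot\beta_{112} &= -\bigl(\tfrac{1}{12}\beta_1\beta_2 + \tfrac{1}{2}\beta_{12}\bigr)\dot\beta_1 + \tfrac{1}{12}\beta_1^2\,\dot\beta_2.
\end{align*}
Thus the lemma reduces to checking four scalar ODEs, together with the initial condition $\beta(0)=(2,0,0,0)$. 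The latter is immediate from $\cosh 0=1$ and $\tanh 0=0$.

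For the two equations in \eqref{Engel:infinite geodesic ODE}, I would simply differentiate the candidate: $\dot\beta_2 = 2\operatorname{sech}^2 t - 1$ matches $\tfrac{1}{2}\beta_1^2 - 1$, and
\[
-\tfrac{1}{2}\beta_1\beta_2 - \beta_{12}
= -\tfrac{1}{\cosh t}\bigl(2\tanh t - t\bigr) - \tfrac{t}{\cosh t}
= -2\operatorname{sech} t\tanh t = \dot\beta_1,
\]
where the two $t/\cosh t$ terms cancel. For the horizontality equation on $\beta_{12}$, direct differentiation gives $\dot\beta_{12} = \operatorname{sech} t - t\operatorname{sech} t\tanh t$, and the same quantity appears on the right after invoking the Pythagorean identity $\tanh^2 t + \operatorname{sech}^2 t = 1$ to collapse the cubic term $2\operatorname{sech} t(\operatorname{sech}^2 t + \tanh^2 t)$.

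The $\beta_{112}$ equation is the longest but entirely analogous: differentiating yields $\dot\beta_{112} = \tfrac{1}{3}\operatorname{sech}^2 t + \tfrac{2t}{3}\operatorname{sech}^2 t\tanh t$; expanding the right-hand side, the combination $\tfrac{1}{12}\beta_1\beta_2 + \tfrac{1}{2}\beta_{12}$ simplifies to $(\tanh t + t)/(3\cosh t)$, and after multiplication by $\dot\beta_1$ and addition of $\tfrac{1}{12}\beta_1^2\dot\beta_2$, another application of $\tanh^2 t + \operatorname{sech}^2 t = 1$ produces the same expression.

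The main obstacle is arithmetic bookkeeping rather than any conceptual difficulty—there are no integrations to perform because the lemma already provides the closed forms, and the Pythagorean identity for $\operatorname{sech}$ and $\tanh$ suffices to collapse every cubic combination that arises. Once all four ODEs are verified, standard uniqueness for the coupled Cauchy problem yields the conclusion.
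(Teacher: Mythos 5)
Your proposal is correct and follows essentially the same route as the paper: direct verification that the candidate satisfies the initial condition, the two horizontality identities for $\dot\beta_{12}$ and $\dot\beta_{112}$ forced by the frame \eqref{Engel:left invariant frame}, and the two equations of \eqref{Engel:infinite geodesic ODE}, with the identity $\tanh^2 t+\operatorname{sech}^2 t=1$ doing the collapsing exactly as you describe (your intermediate simplifications, e.g.\ $\tfrac{1}{12}\beta_1\beta_2+\tfrac{1}{2}\beta_{12}=(\tanh t+t)/(3\cosh t)$, all check out). The only difference is cosmetic: the paper works with $\sinh/\cosh$ ratios and omits the explicit appeal to uniqueness of the Cauchy problem, which you rightly include.
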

\begin{proof}
	The proof of the lemma is a direct computation. First we shall verify horizontality of $\beta$, i.e., that $\dot{\beta}(t) = \dot{\beta}_1(t)X_1(\beta(t))+\dot{\beta}_2(t)X_2(\beta(t))$. By the coordinate form \eqref{Engel:left invariant frame} of the left-invariant frame, we need to check that
	\begin{align}\label{eq:horizontality condition1}
	\dot{\beta}_{12} &= \frac{1}{2}(\beta_1\dot{\beta}_2 - \beta_2\dot{\beta}_1)\quad\text{and}\\\label{eq:horizontality condition2}
	\dot{\beta}_{112} &= \frac{1}{12} \beta_{1}^{2} \dot{\beta}_{2} - \Big(\frac{1}{12} \beta_{1} \beta_{2} + \frac{1}{2} \beta_{12} \Big)\dot{\beta}_{1}.
	\end{align}
	From the given explicit form of $\beta$, we compute the derivatives
	\begin{align*}
	\dot{\beta}_1&=-\frac{2 \, \sinh(t)}{\cosh(t)^{2}},\\
	\dot{\beta}_2&=2(1-\tanh(t)^2)-1 
	= 1-2 \, \tanh(t)^{2},\\
	\dot{\beta}_{12}&=\frac{\cosh(t)-t \sinh(t)}{\cosh(t)^{2}},\\
	\dot{\beta}_{112}
	&=\frac{2}{3\cosh(t)^2}-\frac{\cosh(t)-2t\sinh(t)}{3\cosh(t)^3}
	=\frac{\cosh(t) + 2 t \sinh(t)}{3 \cosh(t)^{3}}.
	\end{align*}
	Expanding the right hand side $\frac{1}{2}(\beta_1\dot{\beta}_2 - \beta_2\dot{\beta}_1)$ of \eqref{eq:horizontality condition1}, we get
	\begin{align*}
	&\frac{1}{2}\left(\frac{2}{\cosh(t)}\Big(1- \frac{2\sinh(t)^2}{\cosh(t)^2}\Big) - \Big(\frac{2\sinh(t)}{\cosh(t)}-t\Big)\Big(-\frac{2\sinh(t)}{\cosh(t)^2}\Big)\right)
	\\&=\frac{1}{\cosh(t)}-\frac{2\sinh(t)^2}{\cosh(t)^3}+\frac{2\sinh(t)^2}{\cosh(t)^3}-\frac{t\sinh(t)}{\cosh(t)^2}
	\\&=\frac{\cosh(t)-2t\sinh(t)}{\cosh(t)^2},
	\end{align*}
	which is exactly $\dot{\beta}_{12}$. Similarly, expanding the right hand side $\frac{1}{12} \beta_{1}^{2} \dot{\beta}_{2} - \Big(\frac{1}{12} \beta_{1} \beta_{2} + \frac{1}{2} \beta_{12} \Big)\dot{\beta}_{1}$ of \eqref{eq:horizontality condition2}, we get
	\begin{align*}
	&\frac{1}{12}\frac{4}{\cosh(t)^2}\Big(1-\frac{2\sinh(t)^2}{\cosh(t)^2}\Big)
	\\&-\Big(
	\frac{1}{12}\frac{2}{\cosh(t)}\Big(\frac{2\sinh(t)}{\cosh(t)}-t\Big)
	+\frac{1}{2}\frac{t}{\cosh(t)}\Big)
	\Big)
	\Big(-\frac{2\sinh(t)}{\cosh(t)^2}\Big)
	\\&=\frac{1}{3\cosh(t)^2}-\frac{2\sinh(t)^2}{3\cosh(t)^4}+\frac{2\sinh(t)^2}{3\cosh(t)^4}+\frac{2t\sinh(t)}{3\cosh(t)^3}
	\\&=\frac{\cosh(t)+2t\sinh(t)}{3\cosh(t)^3},
	\end{align*}
	which is exactly $\dot{\beta}_{112}$, proving horizontality of the curve $\beta$.
	
	Finally, we verify that $\beta$ satisfies the ODE \eqref{Engel:infinite geodesic ODE}. Once again, expanding the right hand sides we get
	\begin{align*}
	-\frac{1}{2}\beta_1\beta_2-\beta_{12}
	&=-\frac{1}{2}\frac{2}{\cosh(t)}\Big(2\frac{\sinh(t)}{\cosh(t)}-t\Big)-\frac{t}{\cosh(t)}
	=-\frac{2\sinh(t)}{\cosh(t)^2}
	=\dot{\beta}_1
	\end{align*}
	and
	\begin{align*}
	\frac{1}{2}\beta_1^2-1
	&=\frac{1}{2}\frac{4}{\cosh(t)^2}-1
	=\frac{2}{\cosh(t)^2}-1
	=1-2\tanh(t)^2=\dot{\beta}_2.\qedhere
	\end{align*}
\end{proof}

From the explicit form of the infinite geodesic $\beta$ we can deduce two properties stronger than that of Theorem~\ref{thm:geodesic blowdowns}: its horizontal projection is asymptotic to a line and the curve itself is in a finite neighborhood of a line.

\begin{proposition}\label{prop:Engel geodesic asymptotes}
	Let $L:\R\to\engel$, $L(t)=\exp(-tX_2)$, which is the abnormal line in the Engel group, and let $\beta:\R\to\engel$ be the infinite geodesic of Lemma~\ref{lemma:Engel explicit geodesic}. Then
	\begin{align*}
	\lim\limits_{t\to\infty}d(\beta(t),\exp(\tfrac{2}{3}X_{112})L(t-2)) &= 0\quad\text{and}\\
	\lim\limits_{t\to-\infty}d(\beta(t),\exp(-\tfrac{2}{3}X_{112})L(t+2)) &= 0.
	\end{align*}
\end{proposition}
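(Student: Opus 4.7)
The plan is to show directly that in exponential coordinates the left-translated point $(\exp(\pm\tfrac{2}{3}X_{112})L(t\mp 2))^{-1}\cdot\beta(t)$ converges to the identity as $t\to\pm\infty$, and then invoke continuity of the Carnot-Carathéodory distance to conclude.

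First I would observe that $X_{112}$ is central in $\mathfrak{g}$: the only non-zero brackets are $[X_1,X_2]=X_{12}$ and $[X_1,X_{12}]=X_{112}$, and Jacobi (together with $[X_2,X_{12}]=0$ from the step 3 assumption) gives $[X_2,X_{112}]=0$. Hence in exponential coordinates
\[ \exp(\tfrac{2}{3}X_{112})L(t-2)=\exp\bigl(-(t-2)X_2+\tfrac{2}{3}X_{112}\bigr), \]
which is the point $p^+(t):=(0,\,2-t,\,0,\,\tfrac{2}{3})$; similarly $\exp(-\tfrac{2}{3}X_{112})L(t+2)$ corresponds to $p^-(t):=(0,\,-t-2,\,0,\,-\tfrac{2}{3})$.

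Next I would apply the BCH group law \eqref{Engel:group law} to compute $q^+(t):=p^+(t)^{-1}\cdot\beta(t)$. Since $p^+(t)^{-1}=(0,\,t-2,\,0,\,-\tfrac{2}{3})$ has vanishing $x_1$ and $x_{12}$ components, most cross terms in \eqref{Engel:group law} collapse; a direct calculation using the explicit form of $\beta$ from Lemma~\ref{lemma:Engel explicit geodesic} gives
\[ q^+(t)=\Bigl(\tfrac{2}{\cosh(t)},\,2(\tanh(t)-1),\,\tfrac{2}{\cosh(t)},\,\tfrac{2}{3}(\tanh(t)-1)-\tfrac{2}{3\cosh(t)^2}\Bigr). \]
The key cancellation occurs in the fourth coordinate: the $-\tfrac{t}{3\cosh(t)^2}$ term in $\beta_{112}(t)$ is exactly neutralized by the BCH correction $\tfrac{1}{12}x_2 y_1^2 = \tfrac{t-2}{3\cosh(t)^2}$. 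Each coordinate of $q^+(t)$ then visibly tends to $0$ as $t\to+\infty$, so $q^+(t)\to\idelem$.

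Since the CC distance on a Carnot group induces the manifold topology and vanishes at $\idelem$, left-invariance yields
\[ d(\beta(t),\exp(\tfrac{2}{3}X_{112})L(t-2))=d(\idelem,q^+(t))\longrightarrow 0, \]
as desired. The case $t\to-\infty$ is entirely analogous: computing $q^-(t):=p^-(t)^{-1}\cdot\beta(t)$ with the same BCH identity gives
\[ q^-(t)=\Bigl(\tfrac{2}{\cosh(t)},\,2(\tanh(t)+1),\,-\tfrac{2}{\cosh(t)},\,\tfrac{2}{3}(\tanh(t)+1)+\tfrac{2}{3\cosh(t)^2}\Bigr), \]
and each coordinate tends to $0$ as $t\to-\infty$. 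The only real step requiring care is the bookkeeping in the fourth coordinate, where the non-abelian polynomial correction from BCH must exactly match the explicit $t/\cosh(t)^2$ contribution in $\beta_{112}$; once this is verified, the rest is immediate from continuity of $d$.
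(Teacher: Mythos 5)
Your proposal is correct and follows essentially the same route as the paper: both compute, in exponential coordinates via the group law \eqref{Engel:group law}, the product of the inverse of the translated line point with $\beta(t)$, and observe that every component (in particular the $x_{112}$-component, where the $\tfrac{1}{12}x_2y_1^2$ term cancels the $-\tfrac{t}{3\cosh(t)^2}$ contribution of $\beta_{112}$) tends to $0$; the paper merely keeps the shift parameters $a,b$ general and reads off the correct values at the end, whereas you substitute them from the start. Your coordinate expressions for $q^\pm(t)$ check out against the group law and Lemma~\ref{lemma:Engel explicit geodesic}.
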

\begin{proof}
	To prove the claim, we will consider the distances $$d(\exp(bX_{112})\exp(-(t+a)X_2),\beta(t)),$$ where $a,b\in\R$ are some constants.
	This distance is zero exactly when the product
	\[ z(t) = (0,t+a,0,-b)\cdot \beta(t) \] 
	vanishes.
	
	By the group law \eqref{Engel:group law} and the explicit form of the components given in Lemma~\ref{lemma:Engel explicit geodesic}, we see that the components of the product $z(t)$ are
	\begin{align*}
	z_1(t) &= \beta_1(t) = \frac{2}{\cosh(t)},
	\\
	z_2(t) &= \beta_2(t) + t + a = 2\tanh(t) + a,
	\\
	z_{12}(t) &= \beta_{12}(t) -\frac{1}{2}(t+a)\beta_1(t)
	= - \frac{a}{\cosh(t)}\quad\text{and}
	\\
	z_{112}(t) &= \beta_{112}(t) +\frac{1}{12}(t+a)\beta_1(t)^2- b
	=\frac{2}{3}\tanh(t)+ \frac{a}{3\cosh(t)^2} - b.
	\end{align*}
	From the above we deduce that 
	\[ \lim_{t\to\infty}z(t) = (0,2+a,0,2/3-b)\quad\text{and}\quad \lim_{t\to-\infty}z(t)=(0,-2+a,0,-2/3-b). \]
	and the claim of the proposition follows.
\end{proof}
\begin{corollary}\label{cor:Engel geodesic asymptotic properties}
	Let $L:\R\to\engel$, $L(t)=\exp(-tX_2)$, which is the abnormal line in the Engel group, and let $\beta:\R\to\engel$ be the infinite geodesic of Lemma~\ref{lemma:Engel explicit geodesic}. Then
	\[ \lim\limits_{t\to\pm\infty}d(\horproj\circ\beta(t),\horproj\circ L(t)) = 0\quad\text{and}\quad \sup_{t\in\R}d(\beta(t),L) < \infty. \]
\end{corollary}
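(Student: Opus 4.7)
The plan is to read both statements off directly from Proposition~\ref{prop:Engel geodesic asymptotes} by invoking two elementary structural facts about the Engel group: the projection $\horproj$ to the abelianization is $1$-Lipschitz and annihilates $\exp(X_{112})$ because $X_{112}\in[\mathfrak{g},\mathfrak{g}]$, and the element $\exp(\pm\tfrac{2}{3} X_{112})$ is central in $\engel$ because $X_{112}\in V_3$ lies in the center of the stratified Lie algebra.

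For the first claim I would apply the $1$-Lipschitz projection $\horproj$ to both limits in Proposition~\ref{prop:Engel geodesic asymptotes} and use that $\horproj$ kills the central factor $\exp(\pm\tfrac{2}{3} X_{112})$, obtaining
\[ \lim_{t\to+\infty}d\bigl(\horproj\circ\beta(t),\,\horproj\circ L(t-2)\bigr)=0\quad\text{and}\quad \lim_{t\to-\infty}d\bigl(\horproj\circ\beta(t),\,\horproj\circ L(t+2)\bigr)=0. \]
Since $\horproj\circ L(t\mp 2)$ traces out exactly the same line in $V_1$ as $\horproj\circ L(t)$, this shows that the distance from $\horproj\circ\beta(t)$ to the image of $\horproj\circ L$ tends to zero as $t\to\pm\infty$.

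For the second claim, using centrality of $\exp(\pm\tfrac{2}{3} X_{112})$ together with left-invariance of $d$, I would first observe that
\[ d\bigl(\exp(\tfrac{2}{3} X_{112})L(t-2),\,L(t-2)\bigr)=d\bigl(L(t-2)\exp(\tfrac{2}{3} X_{112}),\,L(t-2)\bigr)=d(\exp(\tfrac{2}{3} X_{112}),\idelem) \]
is a constant independent of $t$. Combining this with Proposition~\ref{prop:Engel geodesic asymptotes} via a triangle inequality gives a uniform bound on $d(\beta(t),L(t-2))$ for all large positive $t$, and symmetrically for large negative $t$ using $\exp(-\tfrac{2}{3} X_{112})$ and $L(t+2)$; on any bounded time interval, $d(\beta(t),L)$ is trivially bounded by continuity of $\beta$. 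Patching the three regimes yields $\sup_{t\in\R}d(\beta(t),L)<\infty$. There is no substantial obstacle here: the corollary is a packaging of Proposition~\ref{prop:Engel geodesic asymptotes} through the abelianization map and through the centrality of the top-layer element, with the only mild interpretive point being that $d(\cdot,L)$ and $d(\cdot,\horproj\circ L)$ denote distances to the \emph{images} of these lines so that the parameter shifts $t\mapsto t\mp 2$ from Proposition~\ref{prop:Engel geodesic asymptotes} are absorbed.
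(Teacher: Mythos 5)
Your proof is correct and follows essentially the same route as the paper: the first claim is read off from Proposition~\ref{prop:Engel geodesic asymptotes} by applying the $1$-Lipschitz projection $\horproj$, which kills the commutator element $\exp(\pm\tfrac{2}{3}X_{112})$, and the second by combining that proposition with the centrality of $\exp(\pm\tfrac{2}{3}X_{112})$ through a triangle inequality. Your explicit remark that the parameter shifts $t\mapsto t\mp 2$ must be absorbed by reading the distances as distances to the images of the lines is exactly the point the paper's own proof handles by inserting the extra term $d(L(t),L(t-2))=2$ into its triangle inequality.
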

\begin{proof}
	Since the horizontal projections of the elements $\exp(\pm\frac{2}{3}X_{112})$ are zero, the lines in Proposition~\ref{prop:Engel geodesic asymptotes} have the same horizontal projection as the abnormal line $L$, and the claim $\lim\limits_{t\to\pm\infty}d(\horproj\circ\beta(t),\horproj\circ L(t)) = 0$ follows.
	
	On the other hand, the elements $\exp(\pm\frac{2}{3}X_{112})$ are also in the center of the Engel group, so for all $t\in\R$ we have
	\begin{align*}
	d(L(t),\exp(\tfrac{2}{3}X_{112})L(t-2)) &\leq  d(L(t),L(t-2))
	\\&\quad+d(L(t-2),\exp(\tfrac{2}{3}X_{112})L(t-2))
	\\&=2 + d(\idengel, \exp(\tfrac{2}{3}X_{112})).
	\end{align*}
	Thus Proposition~\ref{prop:Engel geodesic asymptotes} implies that
	\begin{align*}
	\sup_{t\in\R_+}d(\beta(t),L(t)) &\leq \sup_{t\in\R_+}d(\beta(t),\exp(\tfrac{2}{3}X_{112})L(t-2))
	\\&\quad+ d(\exp(\tfrac{2}{3}X_{112})L(t-2),L(t)) < \infty.
	\end{align*}
	Similarly using the triangle inequality through $\exp(-\frac{2}{3}X_{112})L(t+2)$ instead of $\exp(\frac{2}{3}X_{112})L(t-2)$, we see that $\sup_{t\in\R_-}d(\beta(t),L(t))<\infty$, proving the claim.
\end{proof}

\subsection{Lift of the infinite non-line geodesic to step 4}
We shall next show that Theorem~\ref{thm:geodesic blowdowns} cannot be improved to say that every \subriemannian geodesic is at a finite distance from a lower rank subgroup. Although by Corollary~\ref{cor:Engel geodesic asymptotic properties}, this stronger claim is true for the Engel group, the claim is no longer true for the lift of the geodesic $\beta$ from Lemma~\ref{lemma:Engel explicit geodesic} to a specific Carnot group of rank 2 and step 4.

We will prove the claim by showing that the mismatched limits
\[ \lim\limits_{t\to\infty}\beta_{112}(t) = \frac{2}{3}\neq -\frac{2}{3}=\lim\limits_{t\to-\infty}\beta_{112}(t) \]
will cause the lift of $\beta$ to have different lines as asymptotes as $t\to\infty$ and as $t\to -\infty$ (Proposition~\ref{prop:different asymptote lines geodesic}). The claim will then follow from Lemma~\ref{lemma:lines at a finite distance}, where we proved that the only lines a finite distance apart are right translations of one another.

The specific Carnot group $G$ where we will consider a lift of the Engel geodesic $\beta$ is the one whose Lie algebra $\mathfrak{g}$ has the basis $$X_1,X_2,X_{12},X_{112},X_{122},X_{1122},$$
whose only non-zero commutators are (see Figure~\ref{figure:step 4 algebra} for a visual description)
\begin{align*}
\brkt{X_1}{X_2} &= X_{12},&
\brkt{X_1}{X_{12}} &= X_{112},\\
\brkt{X_{12}}{X_2} &= X_{122},&
\brkt{X_1}{X_{122}} &= \brkt{X_{112}}{X_2} = X_{1122}.
\end{align*}
\begin{figure}
	\includegraphics{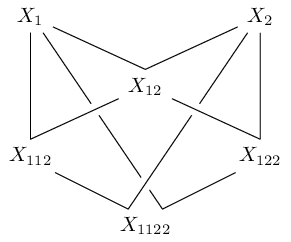}
	\caption{Diagram of relations in the step 4 Lie algebra $\mathfrak{g}$ with the Engel Lie algebra as a quotient.}\label{figure:step 4 algebra}
\end{figure}
The Lie algebra of the Engel group is a quotient of $\mathfrak{g}$ by the ideal generated by $X_{122}$, so the Engel group is the quotient of $G$ by the subgroup $H=\exp(\Span\{X_{122},X_{1122}\})$. The metric on $G$ is the \subriemannian metric such that the projection $\proj_\engel:G\to \engel=G/H$ to the Engel group is a submetry.

Let $\beta:\R\to\engel$ be the geodesic in the Engel group $\engel$ given in Lemma~\ref{lemma:Engel explicit geodesic}.
In exponential coordinates on the Engel group, $\beta(0)=(2,0,0,0)$, so for any initial point $x_0 = (2,0,0,0,x_{122},x_{1122})\in G$ there exists a horizontal lift of $\beta$ to $G$ starting from $x_0$.
Let $\alpha:\R\to G$ be the horizontal lift with the initial point $\alpha(0)=(2,0,0,0,2/3,0)$. As with $\beta_1(0)=2$, the initial coordinate $\alpha_{122}(0)=2/3$ will simplify the asymptotic behavior.
Since the projection $\proj_\engel:G\to \engel$ is a submetry and $\proj_\engel\circ\alpha=\beta$ is an infinite geodesic, the curve $\alpha$ is an infinite geodesic in $G$.

To study the lift $\alpha$, we will work in exponential coordinates. The group law is once again given by the BCH formula, which in a nilpotent Lie algebra of step 4 takes the form (for computation of the coefficients, see e.g. \cite[2.15]{Varadarajan:1984:Lie_groups})
\begin{align*}
\log(\exp(X)\exp(Y)) &= X + Y + \frac{1}{2}\brkt{X}{Y} + \frac{1}{12}(\brkt{X}{\brkt{X}{Y}}+\brkt{\brkt{X}{Y}}{Y})
\\&\quad+ \frac{1}{24}\brkt{X}{\brkt{\brkt{X}{Y}}{Y}}.
\end{align*}
In the first four coordinates, the group law $z=x\cdot y$ is the same as in the Engel group, so the components $z_1,z_2,z_{12},z_{112}$ are given by \eqref{Engel:group law}. In the last two coordinates, we have
\begin{align}\label{R2S4:group law}
z_{122} &= 
x_{122} + y_{122}
+ \frac{1}{2} (x_{12} y_{2} - x_{2} y_{12})
\\\nonumber&+ \frac{1}{12} (x_{1} y_{2}^{2} - x_{1} x_{2} y_{2} - x_{2} y_{1} y_{2}  + x_{2}^{2} y_{1}),\\
\nonumber z_{1122} &= \begin{aligned}[t]
x_{1122} &+ y_{1122}
+ \frac{1}{2} (x_{1} y_{122} - x_{122} y_{1} - x_{2} y_{112} + x_{112} y_{2})
\\&- \frac{1}{6} (x_{1} x_{2} y_{12}  + x_{12} y_{1} y_{2})
\\&+ \frac{1}{12} (x_{1} x_{12} y_{2} + x_{1} y_{2} y_{12} + x_{2} x_{12} y_{1} + x_{2} y_{1} y_{12})
\\&+ \frac{1}{24} (x_{1}^{2} y_{2}^{2} - x_{2}^{2} y_{1}^{2}).
\end{aligned}
\end{align}
The left-invariant extensions of the horizontal vectors $X_1$ and $X_2$ are 
\begin{align}\label{R2S4:horizontal frame}
X_1(x) &= \partial_{1} 
- \frac{1}{2}x_{2}\partial_{12} 
- (\frac{1}{12}x_{1} x_{2} + \frac{1}{2}x_{12})\partial_{112} 
+ \frac{1}{12}x_{2}^{2}\partial_{122} 
\\\nonumber&+ (\frac{1}{12}x_{12} x_{2} - \frac{1}{2}x_{122})\partial_{1122},
\\\nonumber X_2(x) &= \partial_{2} 
+ \frac{1}{2}x_{1}\partial_{12} 
+ \frac{1}{12}x_{1}^{2}\partial_{112} 
- (\frac{1}{12}x_{1} x_{2} - \frac{1}{2}x_{12})\partial_{122} 
\\\nonumber&+ (\frac{1}{12}x_{1} x_{12} + \frac{1}{2}x_{112})\partial_{1122}.
\end{align}

\begin{lemma}\label{lemma:R2S3:missing coordinate}
	In exponential coordinates, the second coordinate of degree 3 of $\alpha:\R\to G$ is
	\begin{equation*}
	\alpha_{122}(t) = \frac{t^{2} + 4}{6\cosh(t)} + \frac{t \sinh(t)}{3\cosh(t)^{2}}.
	\end{equation*}
\end{lemma}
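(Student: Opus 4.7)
The plan is to derive an ODE for $\alpha_{122}$ from the left-invariant horizontal frame \eqref{R2S4:horizontal frame} on $G$ and then verify the proposed formula by direct differentiation.

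First I would observe that the group law \eqref{R2S4:group law} in $G$ agrees with the Engel group law \eqref{Engel:group law} in the first four exponential coordinates, because the subgroup $H=\exp(\Span\{X_{122},X_{1122}\})$ is a normal subgroup lying above the fourth coordinate. Consequently the horizontal lift $\alpha$ of $\beta$ satisfies $\alpha_j(t)=\beta_j(t)$ for $j\in\{1,2,12,112\}$, and the controls of $\alpha$ with respect to $X_1,X_2$ coincide with the controls $u_1=\dot\beta_1$, $u_2=\dot\beta_2$ of $\beta$.

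Next, reading off the $\partial_{122}$-components of $X_1$ and $X_2$ from \eqref{R2S4:horizontal frame}, the horizontality equation $\dot\alpha=u_1X_1(\alpha)+u_2X_2(\alpha)$ in the $x_{122}$ coordinate becomes the scalar ODE
\begin{equation*}
\dot\alpha_{122}(t) = \tfrac{1}{12}\beta_2(t)^2\,\dot\beta_1(t) + \Big(\tfrac{1}{2}\beta_{12}(t)-\tfrac{1}{12}\beta_1(t)\beta_2(t)\Big)\dot\beta_2(t),
\end{equation*}
with initial condition $\alpha_{122}(0)=2/3$. The initial condition is immediate from the proposed formula since $\cosh(0)=1$ and $\sinh(0)=0$ give $\alpha_{122}(0)=4/6+0=2/3$.

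Finally I would substitute the explicit formulas for $\beta_1,\beta_2,\beta_{12}$ and their derivatives from Lemma~\ref{lemma:Engel explicit geodesic} into the right-hand side, and separately differentiate the proposed closed form $\frac{t^2+4}{6\cosh t}+\frac{t\sinh t}{3\cosh^2 t}$. Both computations produce a linear combination of the monomials $t/\cosh t$, $t^2\sinh t/\cosh^2 t$, $\sinh t/\cosh^2 t$, and $t\sinh^2 t/\cosh^3 t$; checking that the coefficients match uses only the identity $\cosh^2 t-\sinh^2 t=1$. The only real obstacle is the algebraic bookkeeping of this simplification, but no further geometric input is needed beyond the left-invariant expansion of the horizontal frame on $G$.
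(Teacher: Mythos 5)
Your proposal is correct and follows essentially the same route as the paper: it reads off the $\partial_{122}$-components of the frame \eqref{R2S4:horizontal frame} to get the scalar ODE $\dot\alpha_{122}=\tfrac{1}{12}\alpha_2^2\dot\alpha_1-(\tfrac{1}{12}\alpha_1\alpha_2-\tfrac{1}{2}\alpha_{12})\dot\alpha_2$, checks $\alpha_{122}(0)=2/3$, and verifies the closed form by direct differentiation using the expressions from Lemma~\ref{lemma:Engel explicit geodesic}. The only difference is cosmetic (you make explicit the observation that $\alpha$ and $\beta$ share their first four coordinates, which the paper also uses implicitly), so there is nothing to add.
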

\begin{proof}
	By the explicit form of the left-invariant frame given in \eqref{R2S4:horizontal frame}, we need to show that the given expression for $\alpha_{122}$ satisfies both the horizontality condition
	\begin{equation}\label{eq:derivative condition}
		\dot{\alpha}_{122} = \dot{\alpha}_1X_1(\alpha) + \dot{\alpha}_2X_2(\alpha)
		= \frac{1}{12}\alpha_2^2\dot{\alpha}_1 - (\frac{1}{12}\alpha_1\alpha_2-\frac{1}{2}\alpha_{12})\dot{\alpha}_2
	\end{equation}
	and the initial condition $\alpha_{122}(0)=\frac{2}{3}$. The initial condition is immediately verified, since $\alpha_{122}(0) = \frac{4}{6\cosh(0)} = \frac{2}{3}$.
	
	Since $\alpha$ and $\beta$ agree in the first four coordinates, we get by Lemma~\ref{lemma:Engel explicit geodesic} that
	\begin{align*}
	\frac{1}{12}\alpha_2^2\dot{\alpha}_1 &= 
	\frac{1}{12}\Big(2\frac{\sinh(t)}{\cosh(t)}-t\Big)^2\Big(-\frac{2\sinh(t)}{\cosh(t)^2}\Big)
	\\&=-\frac{2\sinh(t)^3}{3\cosh(t)^4}+\frac{2t\sinh(t)^2}{3\cosh(t)^3}-\frac{t^2\sinh(t)}{6\cosh(t)^2}
	\\&=\frac{-4\sinh(t)^3+4t\sinh(t)^2\cosh(t)-t^2\sinh(t)\cosh(t)^2}{6\cosh(t)^4},
	\end{align*}
	\begin{align*}
	-\frac{1}{12}\alpha_1\alpha_2\dot{\alpha}_2 &=
	-\frac{1}{12}\frac{2}{\cosh(t)}\Big(\frac{2\sinh(t)}{\cosh(t)}-t\Big)\Big(1-\frac{2\sinh(t)^2}{\cosh(t)^2}\Big)
	\\&= -\frac{\sinh(t)}{3\cosh(t)^2}+\frac{2\sinh(t)^3}{3\cosh(t)^4}+\frac{t}{6\cosh(t)}-\frac{t\sinh(t)^2}{3\cosh(t)^3}
	\\&= \frac{-2\sinh(t)\cosh(t)^2+4\sinh(t)^3+t\cosh(t)^3-2t\sinh(t)^2\cosh(t)}{6\cosh(t)^4},
	\end{align*}
	and
	\begin{align*}
	\frac{1}{2}\alpha_{12}\dot{\alpha}_2 &=
	\frac{1}{2}\frac{t}{\cosh(t)}\Big(1-\frac{2\sinh(t)^2}{\cosh(t)^2}\Big)
	\\&=\frac{t}{2\cosh(t)}-\frac{t\sinh(t)^2}{\cosh(t)^3}
	\\&=\frac{3t\cosh(t)^3-6t\sinh(t)^2\cosh(t)}{6\cosh(t)^4}.
	\end{align*}
	Summing up the above, we get
	\begin{align*}
	\frac{1}{12}\alpha_2^2\dot{\alpha}_1 - (\frac{1}{12}\alpha_1\alpha_2-\frac{1}{2}\alpha_{12})\dot{\alpha}_2
	&= \frac{2t}{3\cosh(t)^3} - \frac{(t^2+2)\sinh(t)}{6\cosh(t)^2}.
	\end{align*}
	On the other hand, by differentiating the given expression for $\alpha_{122}$, we also get
	\begin{align*}
	\frac{d}{dt}\Big(\frac{t^{2} + 4}{6\cosh(t)} + \frac{t \sinh(t)}{3\cosh(t)^{2}}\Big)
	&=\frac{2t}{3\cosh(t)^3} - \frac{(t^2+2)\sinh(t)}{6\cosh(t)^2},
	\end{align*}
	so the horizontality condition \eqref{eq:derivative condition} is satisfied.
\end{proof}

\begin{proposition}\label{prop:different asymptote lines geodesic}
	Let $L_\pm(t)=\exp(-(tX_2\pm\frac{2}{3}tX_{1122}))$.
	Then
	\[ \sup_{t\in\R_+}d(\alpha(t),L_+(t)) < \infty\quad\text{and}\quad\sup_{t\in\R_-}d(\alpha(t),L_-(t))<\infty. \]
\end{proposition}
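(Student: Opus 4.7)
The plan is to mirror the approach of Proposition~\ref{prop:Engel geodesic asymptotes}: by left-invariance of $d$ it suffices to show that the product $z^\pm(t):=L_\pm(t)^{-1}\cdot\alpha(t)$ remains in a bounded subset of $G$ when $t\in\R_\pm$, since the Carnot--Carath\'eodory distance on $G$ is continuous and proper with respect to exponential coordinates.

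First I would compute the missing coordinate $\alpha_{1122}(t)$ in the same spirit as Lemma~\ref{lemma:R2S3:missing coordinate}. The $\partial_{1122}$-component of the horizontality equation $\dot\alpha = \dot\alpha_1 X_1(\alpha) + \dot\alpha_2 X_2(\alpha)$, read off from \eqref{R2S4:horizontal frame}, gives the ODE
\[ \dot{\alpha}_{1122} = \dot{\alpha}_1 \Big( \tfrac{1}{12}\alpha_{12}\alpha_2 - \tfrac{1}{2}\alpha_{122}\Big) + \dot{\alpha}_2\Big(\tfrac{1}{12}\alpha_1\alpha_{12} + \tfrac{1}{2}\alpha_{112}\Big). \]
Plugging in the formulas of Lemmas~\ref{lemma:Engel explicit geodesic} and~\ref{lemma:R2S3:missing coordinate}, the second bracket collapses neatly to $\tfrac{\tanh t}{3}$, while the first, after cancellation of a $t\sinh^2 t/\cosh^4 t$ term, reduces to $(t^2+2)\sinh t/(3\cosh^3 t)$. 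Two integrations by parts of the resulting $\int (t^2+4)\sinh t/\cosh^3 t\,dt$, together with the initial condition $\alpha_{1122}(0)=0$, give the closed form
\[ \alpha_{1122}(t) = \tfrac{t\tanh t}{3} - \tfrac{t^2+4}{6\cosh^2 t} - \tfrac{2\log\cosh t}{3} + \tfrac{2}{3}, \]
from which one reads $\alpha_{1122}(t) = -\tfrac{|t|}{3} + O(1)$ as $|t|\to\infty$.

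Next I would compute $z^\pm(t)$ in exponential coordinates via the group laws \eqref{Engel:group law} and \eqref{R2S4:group law}, using that $L_\pm(t)^{-1}$ has coordinates $(0,t,0,0,0,\pm\tfrac{2}{3}t)$. The first four coordinates of $z^\pm$ are identical to the Engel computation of Proposition~\ref{prop:Engel geodesic asymptotes}; only a handful of BCH cross-terms in $z_{122}$ and $z_{1122}$ are new. After organizing the surviving terms one finds $z_{122}(t) = \tfrac{2}{3\cosh t}\to 0$, and all degree-$2$ contributions to $z_{1122}$ with powers of $1/\cosh^2 t$ cancel in pairs, leaving
\[ z^\pm_{1122}(t) = \pm\tfrac{2t}{3} - \tfrac{2}{3\cosh^2 t} - \tfrac{2\log\cosh t}{3} + \tfrac{2}{3}, \]
which converges to $\tfrac{2\log 2}{3} + \tfrac{2}{3}$ as $t\to\pm\infty$, respectively. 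Once every coordinate of $z^\pm$ is shown to stay bounded on the corresponding half-line, the conclusion $\sup_{t\in\R_\pm} d(\idelem,z^\pm(t)) = \sup_{t\in\R_\pm}d(L_\pm(t),\alpha(t)) < \infty$ follows. The main obstacle is purely bookkeeping: extracting $\alpha_{1122}$ in closed form and then tracking the degree-$4$ BCH corrections so that the crucial cancellation between the linear pieces $\pm\tfrac{2t}{3}$ of $L_\pm(t)^{-1}$, the $-\tfrac{t\tanh t}{3}$ coming from the $-\tfrac{t\alpha_{112}}{2}$ term, and the $+\tfrac{t\tanh t}{3}$ piece of $\alpha_{1122}$ becomes transparent.
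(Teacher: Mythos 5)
Your proposal is correct and follows essentially the same strategy as the paper: left-translate $\alpha$ by $L_\pm(t)^{-1}=\exp(tX_2\pm\tfrac23 tX_{1122})$, observe that the first four exponential coordinates of the product are bounded by the Engel computation, and check that the cross-terms in $z_{122}$ and $z_{1122}$ involving $\alpha_1$ and $\alpha_{12}$ are harmless, leaving only the cancellation between $\pm\tfrac23 t$, $-\tfrac12 t\alpha_{112}$, and $\alpha_{1122}$. The only (cosmetic) difference is that you integrate the horizontality ODE to get $\alpha_{1122}$ in closed form — and your formula $\alpha_{1122}(t)=\tfrac{t\tanh t}{3}-\tfrac{t^2+4}{6\cosh^2 t}-\tfrac{2\log\cosh t}{3}+\tfrac23$ does check out against $\dot\alpha_{1122}=\tfrac{(t^2+4)\sinh t}{3\cosh^3 t}-\tfrac{\tanh t}{3}$ — whereas the paper sidesteps this by estimating the derivative of $w(t)=\alpha_{1122}(t)\pm\tfrac23 t-\tfrac12 t\alpha_{112}(t)$ directly.
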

\begin{proof}
	As in Proposition~\ref{prop:Engel geodesic asymptotes}, we compute the distances $d(\alpha(t),L_\pm(t))$ directly by the considering the products $L_\pm(t)^{-1}\alpha(t)$. Since the lines $L_+$ and $L_-$ only differ by the sign of $\frac{2}{3}tX_{1122}$, we will combine the computations. That is, we will consider the product
	\[ z(t) := \exp(tX_2 \pm \tfrac{2}{3}tX_{1122})\alpha(t). \]
	
	The group law in the first four coordinates is exactly the group law of the Engel group \eqref{Engel:group law}, so the first four components $z_1,z_2,z_{12},z_{112}$ are bounded by Corollary~\ref{cor:Engel geodesic asymptotic properties}. It remains to consider the components $z_{122}$ and $z_{1122}$.
	
	By the group law \eqref{R2S4:group law}, we have
	\begin{align*}
	z_{122}(t) &= \alpha_{122}(t) 
	- \frac{1}{2}t\alpha_{12}(t)
	- \frac{1}{12}t\alpha_1(t)\alpha_2(t)
	+ \frac{1}{12}t^2\alpha_1(t)
	\quad\text{and}\\
	z_{1122}(t) &= \alpha_{1122}(t)
	\pm \frac{2}{3}t
	- \frac{1}{2}t\alpha_{112}(t)
	+ \frac{1}{12}t\alpha_1(t)\alpha_{12}(t)
	- \frac{1}{24}t^2\alpha_1(t)^2.
	\end{align*}
	By the explicit expressions given in Lemma~\ref{lemma:Engel explicit geodesic}, we see that the components $\alpha_1=\beta_1$ and $\alpha_{12}=\beta_{12}$ are both exponentially asymptotically vanishing, i.e., for any polynomial $P:\R\to\R$, we have
	\begin{equation*}
		\lim\limits_{t\to\pm\infty}P(t)\alpha_1(t) = \lim\limits_{t\to\pm\infty}P(t)\alpha_{12}(t) = 0.
	\end{equation*}
	Therefore there exists a constant $C>0$ such that
	\begin{equation}\label{lift asymptotes:final coordinate bounds}
	\abs{z_{122}(t)} \leq \abs{\alpha_{122}(t)} + C\quad\text{and}\quad
	\abs{z_{1122}(t)} \leq \abs{\alpha_{1122}(t)\pm \frac{2}{3}t- \frac{1}{2}t\alpha_{112}(t)} + C.
	\end{equation}
	By the explicit form in Lemma~\ref{lemma:R2S3:missing coordinate}, we see that $\alpha_{122}$ is bounded, so the same is true for $z_{122}$. For $z_{1122}$, we will consider the term
	\begin{align*}
	w(t)&:=\alpha_{1122}(t) \pm \frac{2}{3}t- \frac{1}{2}t\alpha_{112}(t)
	\end{align*}
	separately for $t>0$ and $t<0$.
	
	Instead of explicitly computing $\alpha_{1122}$, we will consider the derivative $\dot{w}$. Since $\alpha$ is a horizontal curve, from the explicit form \eqref{R2S4:horizontal frame} of the left-invariant frame, we get the identity
	\[ \dot{\alpha}_{1122} = (\frac{1}{12}\alpha_{12}\alpha_2-\frac{1}{2}\alpha_{122})\dot{\alpha}_1 + (\frac{1}{12}\alpha_1\alpha_{12} + \frac{1}{2}\alpha_{112})\dot{\alpha}_2. \]
	By the explicit expressions given in Lemmas \ref{lemma:Engel explicit geodesic} and \ref{lemma:R2S3:missing coordinate}, we see that as $t\to\pm\infty$ the terms $\alpha_1,\alpha_{12},\alpha_{122},\dot{\alpha}_{112}$ and $\dot{\alpha}_2+1$, are all exponentially vanishing. It follows that
	\begin{equation}\label{lift asymptotes:final coordinate derivative}
		\dot{w}(t) = -\alpha_{112}(t) \pm \frac{2}{3} + \epsilon(t),
	\end{equation}
	where $\epsilon:\R\to\R_+$ is some smooth function such that $\epsilon(t)=O(e^{-\abs{t}})$ as $t\to\pm\infty$.
	
	Finally, we observe that as $t\to\infty$, $\alpha_{112}(t)-\frac{2}{3}=O(e^{-t})$, and as $t\to -\infty$, $\alpha_{112}(t)+\frac{2}{3}=O(e^t)$. Therefore from \eqref{lift asymptotes:final coordinate derivative} we conclude that as $t\to\infty$ we have
	\[ \abs{\alpha_{1122}(t)+\frac{2}{3}t-\frac{1}{2}t\alpha_{112}(t)}
	\leq \int_{0}^{t}\abs{-\alpha_{112}(s) + \frac{2}{3} + \epsilon(s)}\,ds = O(e^{-t}). \]
	It follows from \eqref{lift asymptotes:final coordinate bounds} that also the final coordinate of $L_+(t)^{-1}\alpha(t)$ is bounded on $\R_+$. Thus the product $L_+(t)^{-1}\alpha(t)$ is bounded on $\R_+$.
	
	Similarly for $t\to-\infty$ we conclude that
	\[ \abs{\alpha_{1122}(t)-\frac{2}{3}t-\frac{1}{2}t\alpha_{112}(t)} = O(e^t), \]
	from which it follows that the product $L_-(t)^{-1}\alpha(t)$ is bounded on $\R_-$, proving the claim.
\end{proof}

\begin{corollary}\label{cor:geodesic not in a line nbhd}
	Let $L:\R\to G$ be any line. Then $\dhaus(\alpha(\R),L(\R))=\infty$.
\end{corollary}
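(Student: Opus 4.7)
The plan is an argument by contradiction built on Proposition~\ref{prop:different asymptote lines geodesic} and Lemma~\ref{lemma:lines at a finite distance}. Suppose toward contradiction that some line $L(s)=h\exp(sZ)$ in $G$ satisfies $\dhaus(\alpha(\R),L(\R))<\infty$. The strategy is to show that the two ends of $L$ must respectively track the two asymptote half-lines $L_+(\R_+)$ and $L_-(\R_-)$ of Proposition~\ref{prop:different asymptote lines geodesic}, and then to extract an algebraic contradiction from the different $X_{1122}$-components of their direction vectors $Y_+:=-X_2-\tfrac{2}{3}X_{1122}$ and $Y_-:=-X_2+\tfrac{2}{3}X_{1122}$.

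The key step, which I expect to be the main obstacle, is to upgrade the pointwise bounds of Proposition~\ref{prop:different asymptote lines geodesic} together with the set-wise Hausdorff bound into
\[ \dhaus(L(\R_+),L_+(\R_+))<\infty \quad\text{and}\quad \dhaus(L(\R_-),L_-(\R_-))<\infty, \]
after possibly reversing the parametrization of $L$. By the triangle inequality for Hausdorff distance, $\dhaus(L(\R),L_+(\R_+)\cup L_-(\R_-))<\infty$. The matching of each end of $L$ to the correct half-line is forced by the horizontal projection: using Lemma~\ref{lemma:Engel explicit geodesic}, $\horproj\circ\alpha$ escapes to infinity in the $-X_2$ direction as $t\to+\infty$ and in the $+X_2$ direction as $t\to-\infty$, and the same directional divergence holds for $L_+(\R_+)$ and $L_-(\R_-)$ respectively. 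Since $\horproj$ is $1$-Lipschitz, any point of $L_+(\R_+)$ sufficiently far from $\idelem$ cannot lie in a bounded neighborhood of $L_-(\R_-)$, and symmetrically, which rules out the crossed matchings and yields the desired two-sided bounds between half-lines.

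Once the half-line bounds are established, Lemma~\ref{lemma:lines at a finite distance} applied to $L(\R_+)$ and $L_+(\R_+)$, regarded as half-lines based at $h$ and $\idelem$ with direction vectors $Z$ and $Y_+$, yields $Z=c_+\Ad_{h^{-1}}Y_+$ for some $c_+>0$. Reparametrizing $L(\R_-)$ as a half-line based at $h$ with direction $-Z$ and $L_-(\R_-)$ as one based at $\idelem$ with direction $-Y_-$, a second application of the lemma gives $Z=c_-\Ad_{h^{-1}}Y_-$ for some $c_->0$. Applying $\Ad_h$ and equating produces the identity $c_+Y_+=c_-Y_-$ in $\g$; reading off the $X_2$-coefficients forces $c_+=c_-$, while reading off the $X_{1122}$-coefficients forces $c_+=-c_-$. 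No positive constants satisfy both relations, yielding the desired contradiction.
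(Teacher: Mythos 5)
Your proposal is correct and its overall skeleton coincides with the paper's: argue by contradiction, reduce to the two half-line bounds $\dhaus(L(\R_\pm),L_\pm(\R_\pm))<\infty$, apply Lemma~\ref{lemma:lines at a finite distance} to each end to get $Z=c_+\Ad_{h^{-1}}Y_+=c_-\Ad_{h^{-1}}Y_-$ with $c_\pm>0$, and contradict the linear independence of $Y_+$ and $Y_-$ (your explicit reading of the $X_2$- and $X_{1122}$-coefficients is equivalent to the paper's phrasing). The only genuine divergence is in how the half-line bounds are obtained, which you rightly single out as the main obstacle. The paper uses the Hausdorff bound on horizontal projections to see that $\horproj\circ L$ is parallel to the $x_2$-axis, reparametrizes so that $\horproj\circ L(t)=(C,-t)$, derives $d(\alpha(t),L(s))\geq\abs{t-s}-2$, and then invokes Lemma~\ref{lemma:finite hausdorff distance} to upgrade $\dhaus(\alpha(\R),L(\R))<\infty$ to the parametrized bound $\sup_t d(\alpha(t),L(t))<\infty$, from which the half-line bounds follow at once via Proposition~\ref{prop:different asymptote lines geodesic}. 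You instead argue set-theoretically from $\dhaus(L(\R),L_+(\R_+)\cup L_-(\R_-))<\infty$ together with the fact that the two half-lines separate at infinity because their horizontal projections run along opposite rays of the $x_2$-axis. This works, but to make it airtight you need an ingredient your sketch leaves implicit: each end of $L$, say $L((R,\infty))$, is connected, and for $\abs{s}>R$ the two closed sets $\{s:L(s)\in B(L_\pm(\R_\pm),M')\}$ cover it and are disjoint (by the separation at infinity, using also that $d(\idelem,L(s))\to\infty$ since $\exp$ is a diffeomorphism and the distance is proper); only by connectedness does each end lie entirely in one of them, only then does the ``no crossed matching'' observation pin down which, and only then can you recover the reverse inclusion $L_+(\R_+)\subset B(L(\R_+),M'')$ needed for a genuine two-sided Hausdorff bound. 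These are routine details, so your proof is sound; the paper's route through Lemma~\ref{lemma:finite hausdorff distance} simply delivers the parametrized matching in one stroke and avoids this connectedness bookkeeping.
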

\begin{proof}
	The corollary follows from combining Lemma~\ref{lemma:lines at a finite distance} and Proposition~\ref{prop:different asymptote lines geodesic}. 
	Suppose there existed a line $L\subset G$ such that $\dhaus(\alpha(\R),L(\R))<\infty$. Then also $\dhaus(\horproj\circ\alpha(\R),\horproj\circ L(\R))\leq M$, so from the explicit form of the horizontal components of $\alpha$ given in Lemma~\ref{lemma:Engel explicit geodesic}, we see that $\horproj\circ L$ must be parallel to the $x_2$-axis. 
	
	Up to reparametrizing $L$ we can then assume that $\horproj\circ L(t)=(C,-t)$ for some $C\in\R$. In particular, we have
	\[ d(\alpha(t),L(s))\geq d(\horproj\circ\alpha(t),\horproj\circ L(s))\geq \abs{t-s}-2. \]
	Then by Lemma~\ref{lemma:finite hausdorff distance}, since $\dhaus(\alpha(\R),L(\R))<\infty$, we have that also $\sup_{t}d(\alpha(t),L(t))<\infty$. In particular $\dhaus(\alpha(\R_+),L(\R_+))<\infty$ and $\dhaus(\alpha(\R_-),L(\R_-))<\infty$.
	
	Let $L_\pm(t)=  \exp(t Y_\pm)$ be the lines of Proposition~\ref{prop:different asymptote lines geodesic}. Proposition~\ref{prop:different asymptote lines geodesic} and the triangle inequality for the Hausdorff distance imply that
	\[ \dhaus(L(\R_+),L_+(\R_+)) \leq \dhaus(L(\R_+),\alpha(\R_+)) + \dhaus(\alpha(\R_+),L_-(\R_+)) < \infty \]
	and similarly that $\dhaus(L(\R_-),L_2(\R_-))<\infty$.
	By applying Lemma~\ref{lemma:lines at a finite distance} to both halves of the line $L $, we get the existence of constants $c_\pm>0$  such that 
	$$X=c_- \Ad_{g^{-1}} Y_- =c_+ \Ad_{g^{-1}} Y_+,$$
	where $X$ and $g$ are such that $L(t)= g\exp(tX)$.
	This implies that $Y_+$ and $Y_-$ are linearly dependent, which is a contradiction.
\end{proof}

Corollary~\ref{cor:geodesic not in a line nbhd} shows that $\alpha:\R\to G$ is a geodesic that is not in a finite neighborhood of any line, showing that the claim of Theorem~\ref{thm:geodesic blowdowns} cannot hold without considering the projection $\horproj:G\to G/\brkt{G}{G}$. Still Conjecture~\ref{conj:geodesic blowdowns} may be true.

\bibliography{general_bibliography} 
\bibliographystyle{amsalpha}
\end{document}